\documentclass[11pt,reqno,sumlimits]{amsart}

\usepackage[utf8]{inputenc}
\usepackage{amssymb, amscd, amsmath, epsfig, mathtools}
\usepackage{amsthm}
\usepackage{enumerate}
\usepackage{xcolor}
\usepackage{scalerel}
\usepackage{soul}
\usepackage{tikz-cd}

\usepackage[margin=1.0in]{geometry}

\newtheorem{theorem}{Theorem}[section]
\newtheorem*{theorem*}{Theorem}
\newtheorem{definition}{Definition}[section]
\newtheorem{corollary}{Corollary}[section]

\newtheorem{proposition}{Proposition}[section]
\theoremstyle{definition}
\newtheorem{remark}{Remark}[section]

\newcommand{\R}{\mathbb R}

\newcommand{\calC}{\mathcal C}
\newcommand{\calL}{\mathcal L}
\newcommand{\dvol}{ d\text{Vol}_{g}}

\raggedbottom

\begin{document}

\title[Prescribed Scalar Curvature Problem with Dirichlete Condition]{Prescribed Scalar Curvature Problem under Conformal Deformation of A Riemannian Metric with Dirichlet Boundary Condition}
\author[J. Xu]{Jie Xu}
\address{
Current Address: Institute for Theoretical Sciences, Westlake University, Hangzhou, Zhejiang Province, China}
\email{xujie67@westlake.edu.cn}
\address{
Department of Mathematics and Statistics, Boston University, Bosotn, MA, U.S.A.}
\email{xujie@bu.edu}

\date{}	

\maketitle

\begin{abstract} In this article, we first show that for all compact Riemannian manifolds with non-empty smooth boundary and dimension at least 3, there exists a metric, pointwise conformal to the original metric, with constant scalar curvature in the interior, and constant scalar curvature on the boundary by considering the boundary as a manifold of its own with dimension at least 2. We then show a series of prescribed scalar curvature results in the interior and on the boundary, with pointwise conformal deformation. These type of results is both an analogy and an extension of Kazdan and Warner's ``Trichotomy Theorem" on a different type of manifolds. The key step of these problems is to obtain a positive, smooth solution of a Yamabe equation with Dirichlet boundary conditions.
\end{abstract}

\section{Introduction}
In this article, we consider the conformal deformation of a Riemannian metric on a compact manifold $ (\bar{M}, g) $ with Dirichlet boundary conditions along non-empty smooth boundary $ \partial M $, $ n = \dim \bar{M} \geqslant 3 $. On one hand, we consider a conformal Riemannian metric with prescribed constant scalar curvature in the interior $ M $, this is the boundary Yamabe problem with Dirichlet conditions; on the other hand, given a smooth function $ S \in \calC^{\infty}(\bar{M}) $, we discuss the necessary and sufficient conditions of the existence of a conformal metric with prescribed scalar curvature $ S $. 

In Escobar problem \cite{ESC}, conformal deformation of a Riemannian metric with constant scalar curvature in the interior $ M $ and constant mean curvature on $ \partial M $ was introduced. This boundary Yamabe problem was fully solved very recently in \cite{XU4} for minimal boundary case, and in \cite{XU5} for general constant mean curvature case. For prescribed scalar and mean curvatures on general $ (\bar{M}, g) $, we refer to a very recent result \cite{XU6}. However, the mean curvature along $ \partial M $ is extrinsic by considering $ \partial M $ as part of the whole compact manifold with boundary $ (\bar{M}, g) $. Another point of view is to consider $ \partial M $ as a closed manifold of its own with the induced metric $ \imath^{*} g $ where $ \imath: \partial M \hookrightarrow \bar{M} $ is the inclusion map. Therefore we can consider the prescribed scalar curvature under the conformal deformation of a Riemannian metric $ g $ on $ \bar{M} $ associates with a given conformal deformation of the induced metric $ \imath^{*} g $ on $ \partial M $ for some prescribed scalar curvature on $ \partial M $.

Throughout the article, we denote $ n = \dim \bar{M} \geqslant 3 $ and $ p = \frac{2n}{n - 2} $. Let $ g_{0} $ be some metric on $ \partial M $, which is pointwise conformal to the metric $ \imath^{*} g $ determined by $ \phi \in \calC^{\infty}(\partial M) $. When $ n > 3 $, it can be expressed as $ g_{0} = \left( \phi^{\frac{n+1}{n-3}} \right)^{\frac{n - 2}{4}} \imath^{*} g $; when $ n = 3 $, there exists some smooth function $ f $ such that $ g_{0} = e^{2f} \imath^{*} g $ and $ \phi = \left( e^{2f} \right)^{\frac{n - 2}{4}} $. Mathematically, we look for a positive smooth function $ u \in \calC^{\infty}(\bar{M}) $ such that:

(A) the new metric $ \tilde{g} = u^{p-2} g $ admits a constant scalar curvature $ \lambda $ on interior $ M $, with $ u = \phi $ along $ \partial M $; 

(B) given a smooth function $ S \in \calC^{\infty}(\bar{M}) $, the new metric $ \tilde{g} = u^{p-2} g $ admits the scalar curvature $ S $ on $ M $. The problem (B) can be reduced to the existence of the solution of the PDE
\begin{equation}\label{intro:eqn1}
\begin{split}
-a\Delta_{g} u + S_{g} u & = S u^{p-1} \; {\rm in} \; M; \\
u & = \phi \; {\rm on} \; \partial M.
\end{split}
\end{equation}
Here $ a = \frac{4(n - 1)}{n - 2} $, $ S_{g} $ is the scalar curvature with respect to $ g $. The problem (A) is realized as the existence of the solution of the PDE (\ref{intro:eqn1}) with $ S = \lambda $ for some constant $ \lambda \in \R $. These problems are extensions of the results in \cite{XU2} in which we consider the Yamabe equation with constant Dirichlet boundary condition in a very small Riemannian domain $ (\Omega, g) $ of $ \R^{n}, n \geqslant 3 $.
\medskip

The first main result for prescribed constant scalar curvature is as follows:
\begin{theorem}\label{intro:thm1}
Let $ (\bar{M}, g) $ be a compact manifold with smooth boundary $ \partial M $, $ n = \dim \bar{M} $. Let
\begin{equation*}
\imath : \partial M \hookrightarrow M
\end{equation*}
be the inclusion map. Assume $ \partial M $ to be a $ (n - 1) $-dimensional closed manifold with induced metric. There exists a metric $ \tilde{g} $, conformal to $ g $, that admits a constant scalar curvature on the interior $ M $ of the $ n $-dimensional manifold $ \bar{M} $, and a constant scalar curvature on the $ (n - 1) $-dimensional closed manifold $ \partial M $ with respect to the induced metric $ \imath^{*} \tilde{g} $.
\end{theorem}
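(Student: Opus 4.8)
The plan is to decouple the two curvature conditions. First I reduce the claim to a scalar PDE: if $ u \in \calC^{\infty}(\bar{M}) $ is positive, solves $ -a\Delta_{g} u + S_{g} u = \lambda u^{p-1} $ in $ M $ for some constant $ \lambda $, and equals a positive function $ \phi \in \calC^{\infty}(\partial M) $ on $ \partial M $, then $ \tilde{g} = u^{p-2} g $ has scalar curvature $ \lambda $ throughout $ M $, while $ \imath^{*}\tilde{g} = \phi^{p-2}\imath^{*}g $. Since $ \phi \mapsto \phi^{p-2}\imath^{*}g $ realizes every metric pointwise conformal to $ \imath^{*}g $, it suffices to (i) choose $ \phi > 0 $ smooth so that $ \phi^{p-2}\imath^{*}g $ has constant scalar curvature on the closed $ (n-1) $-manifold $ \partial M $, and then (ii) solve (\ref{intro:eqn1}) with $ S \equiv \lambda $ for this $ \phi $ and for some $ \lambda \in \R $.

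Step (i) is supplied by the resolution of the classical Yamabe problem. When $ n \geqslant 4 $ we have $ \dim\partial M = n - 1 \geqslant 3 $, so the Yamabe--Trudinger--Aubin--Schoen theorem produces a constant-scalar-curvature representative of the conformal class of $ \imath^{*}g $; when $ n = 3 $, $ \partial M $ is a closed surface and the uniformization theorem produces a conformal metric of constant Gauss, hence scalar, curvature. In either case one takes $ \phi > 0 $ smooth with $ \phi^{p-2}\imath^{*}g $ equal to that metric --- for $ n = 3 $ one has $ p - 2 = 4 $, so $ \phi = e^{f/2} $ when the model metric is $ e^{2f}\imath^{*}g $. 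This determines the Dirichlet data in (\ref{intro:eqn1}).

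Step (ii) is the substantive part, where essentially all the work --- and the main obstacle --- lies; it is the ``key step'' advertised in the introduction. I would produce a positive smooth solution of (\ref{intro:eqn1}) with $ S \equiv \lambda $ by variational methods through subcritical approximation: for $ 2 < q < p $ solve $ -a\Delta_{g} u_{q} + S_{g} u_{q} = \lambda_{q}|u_{q}|^{q-2}u_{q} $ in $ M $, $ u_{q} = \phi $ on $ \partial M $, by constrained minimization of $ \int_{M}(a|\nabla u|^{2} + S_{g}u^{2})\dvol $ over $ \{\, u : u - \bar\phi \in H_{0}^{1}(M),\ \|u\|_{L^{q}} = 1 \,\} $ for a fixed positive smooth extension $ \bar\phi $ of $ \phi $ (the functional is coercive and bounded below there because $ \|u\|_{L^{2}} $ is controlled by $ \|u\|_{L^{q}} $ on the fixed-volume manifold, so a minimizer exists by Rellich compactness below the critical exponent). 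One makes $ u_{q} $ positive by replacing it with $ |u_{q}| $ and invoking the strong maximum principle together with $ u_{q} = \phi > 0 $ on $ \partial M $, and smooth up to the boundary by elliptic regularity. The task is then to bound $ u_{q} $ in $ C^{0}(\bar{M}) $ --- hence, by Schauder estimates up to $ \partial M $, in $ C^{2,\alpha}(\bar{M}) $ --- along with $ |\lambda_{q}| $ and a Harnack lower bound $ u_{q} \geqslant c > 0 $, uniformly as $ q \uparrow p $, and pass to the limit; the limit solves the critical problem and is smooth by bootstrapping. The sign of the resulting constant $ \lambda $ is not free: it follows a trichotomy --- $ \lambda < 0 $, $ \lambda = 0 $, or $ \lambda > 0 $ --- governed by the sign of a relative Yamabe quantity attached to $ (\bar{M},\partial M,[g],\phi) $, which is the analogue here of Kazdan and Warner's trichotomy.

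The hard part is the uniform estimate, that is, ruling out loss of compactness (energy concentration) as $ q \uparrow p $, where the critical Sobolev exponent bites. The Dirichlet condition is a genuine advantage: since $ u_{q} \equiv \phi $ stays bounded on $ \partial M $ while a blow-up point would force $ u_{q} \to \infty $ there, a bubble cannot sit at a boundary point --- after rescaling it would be a positive solution of the critical equation on a half-space with zero boundary data, excluded by a half-space Pohozaev identity --- so only interior concentration must be excluded. In the non-positive branch of the trichotomy this is automatic, since an interior bubble rescales to a positive entire solution of $ -a\Delta V = \lambda_{\infty} V^{p-1} $ on $ \R^{n} $ with $ \lambda_{\infty} \leqslant 0 $, which a Liouville argument forbids. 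In the positive branch, interior concentration is ruled out by the strict inequality (relative Yamabe quantity) $ < Y(S^{n}) = n(n-1)\,\omega_{n}^{2/n} $, the Yamabe invariant of the round $ n $-sphere ($ \omega_{n} $ the volume of the unit $ n $-sphere), which one establishes exactly as in the closed Yamabe problem: by rescaled standard bubbles supported near an interior point when $ n \geqslant 6 $ and $ (\bar{M},g) $ is not locally conformally flat there, and by a Green's-function test function together with the positive mass theorem otherwise --- all local constructions that transplant into the interior of $ M $. Assembling (i) and (ii) yields the conformal metric $ \tilde{g} $ claimed in the theorem.
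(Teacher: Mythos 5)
Your Step (i) reproduces the paper's reduction exactly (the choice of $\phi$ via the Yamabe problem when $n\geqslant 4$ and uniformization when $n=3$, with $\phi^{p-2}\imath^*g$ the constant-curvature representative). But Step (ii), where the real work lies, is genuinely different from the paper and has a gap. The paper does \emph{not} solve (\ref{intro:eqn1}) by subcritical variational approximation and blow-up analysis; it uses the monotone iteration scheme of Theorem \ref{iteration:thm1}, building explicit sub- and super-solutions from the first Robin eigenfunction $\varphi$ and from the solution of a linear auxiliary problem $-a\Delta_g u + Cu = 0$, $u=\phi$, with the sign of $\lambda$ dictated by the sign of $\eta_1$ (Theorems \ref{dirichlet:thm1}--\ref{dirichlet:thm3}). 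This is deliberate: the introduction points out that the natural functional $E(u)$ for the Dirichlet problem contains the uncontrollable boundary term $\int_{\partial M}\partial_\nu u\,\phi\,dS$ and, crucially, is \emph{not} a conformal invariant, so the Yamabe/Escobar machinery does not transport to this setting.

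That warning lands squarely on the positive branch of your argument. To rule out interior concentration there you invoke ``the strict inequality (relative Yamabe quantity) $< Y(S^n)$'' and claim it is established ``exactly as in the closed Yamabe problem'' by inserting rescaled bubbles or a Green's-function test function with the positive mass theorem. But you never define this quantity, and the naive candidate --- $\inf\{\int_M(a|\nabla u|^2+S_gu^2):u-\bar\phi\in H^1_0,\ \|u\|_{L^q}=1\}$ --- is an infimum over an \emph{affine} translate of $H^1_0$, not a conformally invariant Rayleigh quotient. A bubble test function must be superposed on the fixed background $\bar\phi$ and renormalized; the resulting cross terms destroy the clean Aubin expansion, and the Lagrange multiplier $\lambda_q$ (hence the constant $\lambda$ you get in the limit, and its sign) is not controlled by any sign condition you have stated. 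Your ``trichotomy governed by a relative Yamabe quantity'' is therefore an assertion, not a proof, and it is precisely the step the paper identifies as the obstruction to the variational route. The remaining blow-up arguments (half-space Pohozaev for boundary points, Liouville for $\lambda\leqslant 0$ interior bubbles) are plausible and standard, but without the positive-branch compactness the scheme does not close. You should either supply and analyze the relative Yamabe quantity rigorously, or replace Step (ii) by the paper's sub/super-solution construction, which sidesteps the concentration issue entirely and also avoids any appeal to the positive mass theorem.
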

\medskip

To present the result below, we denote $ \eta_{1} $ to be the first eigenvalue of the conformal Laplacian on $ (\bar{M}, g) $ with Robin condition
\begin{equation*}
-\frac{4(n - 1)}{n - 2} \Delta_{g} u + S_{g} u = \eta_{1} u \; {\rm in} \; M, \frac{\partial u}{\partial \nu} + \frac{n-2}{2} h_{g} u = 0 \: {\rm on} \; \partial M.
\end{equation*}
$ S_{g} $ is the scalar curvature function and $ h_{g} $ is the mean curvature function. The second result is about prescribed scalar curvature:
\begin{theorem}\label{intro:thm2} Let $ (\bar{M}, g) $ be a compact manifold with smooth boundary $ \partial M $.

(i) If $ \eta_{1} < 0 $, any negative function $ S \in \calC^{\infty}(\bar{M}) $ is realized as a prescribed curvature function of some metric under pointwise conformal change of $ (\bar{M}, g) $; meanwhile the $ ( n - 1) $-dimensional closed manifold $ \partial M $ admits a constant scalar curvature with the same conformal change;

(ii) If $ \eta_{1} = 0 $, any function $ S \in \calC^{\infty}(\bar{M}) $ that is negative on a neighborhood of $ \partial M $ or $ S \equiv 0 $ is realized as a prescribed curvature function of some metric under conformal change of $ (\bar{M}, g) $; meanwhile the $ ( n - 1) $-dimensional closed manifold $ \partial M $ admits a constant scalar curvature with the same conformal change;

(iii) If $ \eta_{1} > 0 $, any function $ S \in \calC^{\infty}(\bar{M}) $ is realized as a prescribed curvature function of some metric under conformal change of $ (\bar{M}, g) $; meanwhile the $ ( n - 1) $-dimensional closed manifold $ \partial M $ admits a constant scalar curvature with the same conformal change;
\end{theorem}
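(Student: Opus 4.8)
The plan is to reduce the entire theorem, in all three cases, to producing a single positive $u\in\calC^\infty(\bar M)$ solving the Dirichlet problem \eqref{intro:eqn1}, after two preliminary normalizations. First, let $\varphi_1\in\calC^\infty(\bar M)$ be the first eigenfunction of the Robin problem defining $\eta_1$; it is strictly positive on $\bar M$, since the strong maximum principle forces $\varphi_1>0$ in $M$ while Hopf's lemma is incompatible with $\varphi_1(x_0)=0$ at $x_0\in\partial M$ under the condition $\partial_\nu\varphi_1+\frac{n-2}{2}h_g\varphi_1=0$. Replacing $g$ by the pointwise conformal metric $\varphi_1^{p-2}g$ — which changes neither the conformal class nor $\operatorname{sign}\eta_1$ — I may assume $S_g=\eta_1\varphi_1^{2-p}$ has constant sign equal to that of $\eta_1$: thus $S_g<0$ everywhere if $\eta_1<0$, $S_g\equiv 0$ if $\eta_1=0$, and $S_g>0$ everywhere if $\eta_1>0$. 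Second, I fix the Dirichlet datum $\phi$ by solving the Yamabe problem on the closed manifold $(\partial M,\imath^*g)$ — the Yamabe theorem when $\dim\partial M=n-1\ge 3$, the uniformization theorem when $n=3$ — so that $\phi^{p-2}\imath^*g$ has constant scalar curvature on $\partial M$, and I use the homothety freedom of the Yamabe representative to take $\|\phi\|_{\calC^0(\partial M)}$ as small as will be needed. Then any positive solution $u$ of \eqref{intro:eqn1} makes $\tilde g=u^{p-2}g$ have scalar curvature $S$ on $M$ and $\imath^*\tilde g=\phi^{p-2}\imath^*g$ have constant scalar curvature on $\partial M$; specializing $S$ to a constant recovers Theorem~\ref{intro:thm1}.

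The core is to solve \eqref{intro:eqn1} by sub- and super-solutions. As sub-solution I take $u_-\equiv 0$ when $\eta_1\ge 0$ — trivially $-a\Delta_g u_-+S_g u_-=0\le Su_-^{p-1}$ and $u_-\le\phi$ on $\partial M$ — and, when $\eta_1<0$, a small positive constant $\varepsilon\le\min_{\partial M}\phi$ with $\varepsilon^{p-2}\le\min_{\bar M}|S_g|/\|S\|_{\calC^0}$, which is a sub-solution precisely because $S<0$ makes $S_g\varepsilon\le S\varepsilon^{p-1}$ equivalent to that bound. The super-solution is case-dependent. If $\eta_1<0$ and $S<0$ on $\bar M$, a constant $u_+\equiv C$ with $C\ge\max_{\partial M}\phi$ and $C^{p-2}\ge\|S_g\|_{\calC^0}/\min_{\bar M}|S|$ works, since $S<0$ turns $S_g C\ge SC^{p-1}$ into the latter bound. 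If $\eta_1\ge 0$ and $S\le 0$ on $\bar M$ (in particular $S\equiv 0$ in the case $\eta_1=0$), then $S_g\ge 0$, so $-a\Delta_g+S_g$ is coercive under the Dirichlet condition and its $\phi$-harmonic extension $u_+$ is positive and satisfies $-a\Delta_g u_++S_g u_+=0\ge Su_+^{p-1}$. The remaining case — $\eta_1\ge 0$ with $S$ positive somewhere, and, when $\eta_1=0$, negative near $\partial M$ — is the delicate one: I let $u_+$ solve the linear problem $-a\Delta_g u_++S_g u_+=\max(S,0)\cdot A$ in $M$, $u_+=\phi$ on $\partial M$, with a constant $A>0$ still to be chosen. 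Then $u_+>0$ and $\|u_+\|_{\calC^0}\le\|\phi\|_{\calC^0}+C_G A$, where $C_G$ is the $L^\infty$-norm of the solution operator of $-a\Delta_g+S_g$ with Dirichlet conditions; the super-solution inequality $\max(S,0)A\ge Su_+^{p-1}$ holds automatically where $S\le 0$ and, where $S>0$, follows from $A\ge \|S^+\|_{\calC^0}\bigl(\|\phi\|_{\calC^0}+C_G A\bigr)^{p-1}$ (with $S^+:=\max(S,0)$), a scalar inequality with a positive solution $A$ exactly because $\|\phi\|_{\calC^0}$ was taken small (for instance $A=2\|S^+\|_{\calC^0}\|\phi\|_{\calC^0}^{p-1}$ once $\|\phi\|_{\calC^0}^{p-2}$ lies below an explicit multiple of $1/(C_G\|S^+\|_{\calC^0})$). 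In every case $0\le u_-\le u_+$.

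Finally I run the monotone iteration $(-a\Delta_g+S_g+K)u_{k+1}=Su_k^{p-1}+Ku_k$ in $M$, $u_{k+1}=\phi$ on $\partial M$, starting from $u_0=u_-$, with $K$ large enough that $t\mapsto St^{p-1}+Kt$ is nondecreasing on $[0,\|u_+\|_{\calC^0}]$ and $-a\Delta_g+S_g+K$ is coercive under the Dirichlet condition. Comparison gives an increasing sequence $u_-=u_0\le u_1\le\cdots\le u_+$; since it is trapped in the bounded order interval $[0,u_+]$ the right-hand sides stay bounded in $L^\infty$, so $\{u_k\}$ is bounded in $W^{2,r}$ for all $r$, hence precompact in $\calC^{1,\alpha}(\bar M)$, and the monotone limit $u$ solves \eqref{intro:eqn1}. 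It is strictly positive because $u\ge u_1$ and the first iterate $u_1$ is a positive solution of a linear Dirichlet problem; being then bounded and bounded away from $0$, it makes $Su^{p-1}$ Hölder, and a Schauder bootstrap yields $u\in\calC^\infty(\bar M)$. The step I expect to be the main obstacle is the super-solution in the cases $\eta_1\ge 0$ with $S$ positive somewhere: one has to break the conformal invariance of the problem through the Dirichlet condition, exploit the freedom to rescale the boundary Yamabe datum $\phi$ to be small, and solve the scalar inequality for $A$; it is equally important that the monotone scheme is carried out for the critical equation itself, so that the $L^\infty$-barriers $0\le u\le u_+$ preempt the concentration/compactness analysis that the critical exponent $p$ would otherwise force. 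Secondary technical points are the conformal invariance of $\operatorname{sign}\eta_1$, the boundary positivity of $\varphi_1$, and the boundary Yamabe/uniformization input used in the first step.
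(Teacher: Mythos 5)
Your proposal is correct in all essentials, but it takes a genuinely different (and in places cleaner) route than the paper. The paper splits the theorem into a long list of separate sub-cases (Theorem \ref{scalar:thm1}, Theorem \ref{scalar:thm2}, Corollary \ref{scalar:cor1}, Theorem \ref{scalar:thm3}, Corollaries \ref{scalar:cor2}--\ref{scalar:cor3}), with a different super-solution in each: a constant when $\eta_1<0$, a scaled Robin eigenfunction when $\eta_1>0$, and in the delicate $\eta_1=0$ case a ``Dirichlet eigenfunction plus constant'' $\tilde\varphi+K$, which is precisely where the hypothesis ``$S<0$ near $\partial M$'' enters (near $\partial M$ the eigenfunction $\tilde\varphi$ degenerates, so the sign of $S$ there must save the inequality). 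You instead perform the conformal normalization $g\mapsto\varphi_1^{p-2}g$ at the outset, so $S_g$ has constant sign $=\operatorname{sign}\eta_1$ and the three cases become structurally parallel; the paper only does this normalization in the $\eta_1=0$ case. For the ``$S$ positive somewhere'' super-solution you then use a Green-operator estimate for $(-a\Delta_g+S_g)^{-1}$ combined with the freedom to rescale the boundary Yamabe datum $\phi$ small, rather than the paper's eigenfunction constructions; this is a nice simplification and, noticeably, does not use the hypothesis ``$S<0$ near $\partial M$'' at all, so your argument would in fact establish a stronger version of (ii). Finally, you run the monotone iteration upward from the sub-solution while the paper's Theorem \ref{iteration:thm1} runs it downward from the super-solution and therefore requires $u_-\not\equiv 0$; your choice $u_-\equiv 0$ is fine precisely because the first iterate $u_1$ is already strictly positive by the maximum principle and the increasing limit dominates it, but note you are implicitly re-proving a variant of the iteration lemma rather than quoting the one stated in the paper.

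Two small slips worth correcting. First, in the ``$S$ positive somewhere'' super-solution, what the inequality $\max(S,0)A\ge Su_+^{p-1}$ actually requires (where $S>0$) is $A\ge\|u_+\|_{\calC^0}^{p-1}$, and the Green-operator estimate gives $\|u_+\|_{\calC^0}\le\|\phi\|_{\calC^0}+C_G\|S^+\|_{\calC^0}A$; so the scalar condition should read $A\ge\bigl(\|\phi\|_{\calC^0}+C_G\|S^+\|_{\calC^0}A\bigr)^{p-1}$ rather than $A\ge\|S^+\|_{\calC^0}\bigl(\|\phi\|_{\calC^0}+C_GA\bigr)^{p-1}$ --- the factor $\|S^+\|_{\calC^0}$ belongs inside the parenthesis with $C_G$. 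The conclusion (existence of a valid $A$ once $\|\phi\|$ is small) is unaffected. Second, $S^+=\max(S,0)$ is only Lipschitz, so the resulting $u_+$ is only $W^{2,q}\cap\calC^{1,\alpha}(\bar M)$ rather than $\calC^\infty(M)\cap\calC^0(\bar M)$ as the iteration scheme nominally assumes; replace $S^+$ by any smooth majorant $\tilde S\ge S^+$ close to it in $\calC^0$ to restore the regularity needed, without affecting the super-solution inequality.
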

We point out here that the result above is not only an analogy of Kazdan and Warner's ``Trichotomy Theorem'' \cite{KW3}, \cite{KW4}, \cite{KW} but also an extension of this classification in the interior of compact manifolds with boundary, since in ``Trichotomy Theorem", the existences of the metrics may not be obtained from pointwise conformal change. But in Theorem \ref{intro:thm2}, all conclusions are with the restriction of pointwise conformal change.
\medskip

Due to the Yamabe problem \cite{PL}, \cite{XU3} or the uniformization theorem on closed Riemann surfaces, $ \partial M $---as a closed manifold of $ \dim (\partial M ) \geqslant 2 $---admits a constant scalar curvature with respect to some conformal change of the induced metric $ \imath^{*} g $, i.e. the scalar curvature of $ g_{0} $ is constant, intrinsically. In this case, solving (\ref{intro:eqn1}) with $ S = \lambda $ implies the existence of a constant scalar curvature on $ M $ under conformal change associated with a constant scalar curvature on the $ (n - 1) $-dimensional closed manifold $ \partial M $.

We no longer consider the Yamabe invariant for this Dirichlet problem. Actually, (\ref{intro:eqn1}) is the Euler-Lagrange equation of the functional
\begin{equation}\label{intro:eqn2}
E(u) = \frac{\int_{M} a \lvert \nabla u \rvert^{2} \dvol + \int_{M} S_{g} u^{2} \dvol - \int_{\partial M} 2\frac{\partial u}{\partial \nu} \phi dS}{\left(\int_{M} u^{p} \dvol \right)^{\frac{2}{p}}}.
\end{equation}
Here $ dS $ is the volume form on $ \partial M $, $ \nu $ is the unit outward normal vector along $ \partial M $. Firstly, we cannot just minimize $ E(u) $ on $ u \in H^{1}(M, g) $ with $ u \geqslant 0 $ due to the last term on the numerator of (\ref{intro:eqn2}). This is quite different from the Yamabe problem or the Escobar problem. Furthermore, it is difficult to analyze the behavior of $ \frac{\partial u}{\partial \nu} $ along the boundary. In addition, since $ E(u) $ in (\ref{intro:eqn2}) is not a Yamabe invariant when $ n \geqslant 3 $, we may not be able to use many powerful tools established in solving the Yamabe problem and the boundary Yamabe problem in global calculus of variations. Therefore, it is natural to consider the solvability of (\ref{intro:eqn1}) by a direct method, here in particular, the monotone iteration scheme, as we used in \cite{XU6} in showing the prescribed scalar and mean curvatures on $ (\bar{M}, g) $.
\medskip

This article is organized as follows. In \S2, we introduce essential definitions, functional spaces, and results that will be used in later sections. In particular, results of the Escobar problems that were proved in \cite{XU4} were introduced, and will play a role in later context. In \S3, we prove two versions of monotone iteration schemes with respect to the Dirichlet problem (\ref{intro:eqn1}), one for globally constant function $ S $, and the other for general function $ S \in \calC^{\infty}(\bar{M}) $. In \S4, we consider the problem (A), i.e. the prescribed constant scalar curvature under conformal change with Dirichlet boundary condition. After several results for the relations between the signs of $ \eta_{1} $---the first eigenvalue of conformal Laplacian with Robin condition, and $ \tilde{\eta}_{1} $---the first eigenvalue of conformal Laplacian with Dirichlet condition from Proposition \ref{dirichlet:prop1} to Proposition \ref{dirichlet:prop3}, we solve the PDE (\ref{intro:eqn1}) with $ S = \lambda $ for some constant $ \lambda $ in Theorem \ref{dirichlet:thm1}, Theorem \ref{dirichlet:thm2}, Theorem \ref{dirichlet:thm3} classified by the sign of $ \eta_{1} $. In particular, Theorem \ref{dirichlet:thethm} states that under conformal change, the manifold $ \bar{M} $ admits a metric with constant scalar curvature in the interior $ M $, and a constant scalar curvature on $ \partial M $ as a manifold of its own. Some further results are given in the rest of \S4, classified by either the sign of $ \tilde{\eta}_{1} $ or the sign of $ S_{g} $. In \S5, we consider the PDE (\ref{intro:eqn1}) for general smooth function $ S $ and solve the problem (B). The sufficient conditions for $ S $ to be a prescribed scalar curvature in the interior $ M $ of the compact manifold $ \bar{M} $ are also classified by the sign of $ \eta_{1} $, as stated in Theorem \ref{scalar:thm1}, Theorem \ref{scalar:thm2} and Theorem \ref{scalar:thm3}. In Theorem \ref{scalar:thethm}, we conclude that adding some possible restriction on $ S $ if necessary, the function $ S $ is a prescribed scalar curvature of some metric conformal to $ g $; meanwhile, the boundary $ \partial M $ admits a constant scalar curvature of its own. Forget about the boundary behavior, we would like to point out that Theorem \ref{scalar:thm3}, Corollary \ref{scalar:cor2} and Corollary \ref{scalar:cor3} state that any function can be a prescribed scalar curvature in the interior of a compact manifolds $ (\bar{M}, g) $ with smooth boundary, provided that the first eigenvalue $ \eta_{1} $ is positive. This result is analogous to Kazdan and Warner's famous ``Trichotomy Theorem", which states that any function can be a prescribed scalar curvature of a closed manifold which admits a positive constant scalar curvature. Theorem \ref{scalar:thm2} and Corollary \ref{scalar:cor1} state the case when $ \eta_{1} = 0 $. Theorem \ref{scalar:thm1} states the case when $ \eta_{1} < 0 $. We point out that our conclusions are extensions of results in ``Trichotomy Theorem" by restricting our choices of metrics within the conformal class $ [g] $.
\medskip

\section{The Preliminaries} 
In this section, we list definitions and results required for later analysis, including Sobolev spaces, maximum principles, $ H^{s} $-type and $ W^{s, p} $-type elliptic regularities, results of the boundary Yamabe problem, Sobolev embedding, etc. Throughout this article, we set $ (\bar{M}, g) $ be a compact Riemannian manifold with non-empty smooth boundary $ \partial M $, $ n = \dim \bar{M} \geqslant 3 $. Here $ g $ is assumed to be extended smoothly to $ \partial M $. We also assume that $ \partial M $ is itself a closed $ (n - 1) $-dimensional manifold with outward normal unit vector $ \nu $. Throughout this article, we denote smooth function by $ \calC^{\infty} $, smooth function with compact support by $ \calC_{c}^{\infty} $ and continuous function by $ \calC^{0} $, etc.

\begin{definition}\label{pre:def1} Let $ (\bar{M}, g) $ be a compact Riemannian $n$-manifold with smooth boundary $ \partial M $ and volume density $\dvol$. Let $u$ be a real valued function. Let $ \langle v,w \rangle_g$ and $ |v|_g = \langle v,v \rangle_g^{1/2} $ denote the inner product and norm with respect to $g$. 

(i) 
For $1 \leqslant p < \infty $,
\begin{equation*}
\mathcal{L}^{p}(M, g) \; {\rm is\ the\ completion\ of}\ \left\{ u \in \calC^{\infty}(M) : \Vert u\Vert_{p,g}^p :=\int_{M} \left\lvert u \right\rvert^{p} d\text{Vol}_{g} < \infty \right\}.
\end{equation*}

(ii) For $\nabla u$  the Levi-Civita connection of $g$, 
and for $ u \in \calC^{\infty}(\Omega) $ or $ u \in \calC^{\infty}(M) $,
\begin{equation}\label{pre:eqn1}
\lvert \nabla^{k} u \rvert_g^{2} := (\nabla^{\alpha_{1}} \dotso \nabla^{\alpha_{k}}u)( \nabla_{\alpha_{1}} \dotso \nabla_{\alpha_{k}} u).
\end{equation}
\noindent In particular, $ \lvert \nabla^{0} u \rvert^{2}_g = \lvert u \rvert^{2} $ and $ \lvert \nabla^{1} u \rvert^{2}_g = \lvert \nabla u \rvert_{g}^{2}.$\\

(iii) For $ s \in \mathbb{N}, 1 \leqslant p < \infty $,
\begin{equation}\label{pre:eqn2}
W^{s, p}(M, g) = \left\{ u \in \mathcal{L}^{p}(M, g) : \lVert u \rVert_{W^{s, p}(M, g)}^{p} = \sum_{j=0}^{s} \int_{M} \left\lvert \nabla^{j} u \right\rvert^{p}_g \dvol < \infty \right\}.
\end{equation}
\noindent Here $ \lvert D^{j}u \rvert^{p} := \sum_{\lvert \alpha \rvert = j} \lvert \partial^{\alpha} u \rvert^{p} $ in the weak sense. Similarly, $ W_{0}^{s, p}(\bar{M}) $ is the completion of $ \calC_{c}^{\infty}(\bar{M}) $ with respect to the $ W^{s, p} $-norm. In particular, $ H^{s}(M, g) : = W^{s, 2}(M, g) $ are the usual Sobolev spaces. We similarly define $H_{0}^{s}(\bar{M}), H_{0}^{s}(\bar{M} ,g)$. 
\end{definition}

Dirichlet boundary value problem (\ref{intro:eqn1}) requires extending functions to $ \partial M $ in the trace sense, as shown in the following proposition.
\begin{proposition}\label{pre:prop1}\cite[Ch.4, Prop.~4.5]{T}
Let $ (\bar{M}, g) $ be a compact manifold with smooth boundary $ \partial M $. Let $ u \in H^{1}(M, g) $. Then there exists a bounded linear operator
\begin{equation*}
T : H^{s}(M, g) \rightarrow H^{s - 1}(\partial M, \imath^{*}g)
\end{equation*}
such that
\begin{equation}\label{boundary:eqn11}
\begin{split}
T u & = u \bigg|_{\partial M}, \; \text{if} \; u \in \calC^{\infty}(\bar{M}) \cap H^{s}(M, g); \\
\lVert T u \rVert_{H^{s-1}(\partial M, \imath^{*} g)} & \leqslant K'' \lVert u \rVert_{H^{s}(M, g)}.
\end{split}
\end{equation}
Here $ K'' $ only depends on $ (M, g), s $ and is independent of $ u $. Furthermore, the map $ T : H^{s}(M, g) \rightarrow H^{s - 1}(\partial M, \imath^{*} g) $ is surjective.
\end{proposition}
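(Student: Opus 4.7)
The plan is to prove this in three steps: reduce to a half-space model via boundary charts, establish the trace inequality on $ \mathbb{R}^n_+ $ by Fourier transform, and construct an explicit right inverse for surjectivity.

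First, I would cover $ \bar{M} $ by finitely many smooth coordinate charts, split into interior charts $ (U_\alpha, \varphi_\alpha) $ with $ \overline{U_\alpha} \subset M $ and boundary charts in which $ \varphi_\alpha(U_\alpha) \subset \mathbb{R}^n_+ $ sends $ U_\alpha \cap \partial M $ onto an open subset of $ \{x_n = 0\} $, and choose a subordinate partition of unity $ \{\chi_\alpha\} \subset \calC^{\infty}(\bar M) $. Writing $ u = \sum_\alpha \chi_\alpha u $, the interior pieces contribute nothing to the trace, so it suffices to bound $ \lVert T(\chi_\alpha u) \rVert_{H^{s-1}(\partial M, \imath^* g)} $ for boundary charts. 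Because $ \bar M $ is compact and $ g $ is smooth up to $ \partial M $, the intrinsic $ H^s $-norm of Definition~\ref{pre:def1} is equivalent to the flat $ H^s $-norm after pulling back by $ \varphi_\alpha $, and likewise for $ \imath^* g $ on the boundary. The problem reduces to the Euclidean assertion: if $ v \in H^s(\mathbb{R}^n_+) $ has compact support, then $ v(\cdot, 0) \in H^{s-1}(\mathbb{R}^{n-1}) $ with a controlled norm.

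Second, I would prove the Euclidean inequality by Fourier analysis. Extend $ v $ boundedly from $ H^s(\mathbb{R}^n_+) $ to $ H^s(\mathbb{R}^n) $ and assume $ v \in \mathcal{S}(\mathbb{R}^n) $ by density; Fourier inversion in the normal variable then gives
\begin{equation*}
\widehat{v(\cdot, 0)}(\xi') = (2\pi)^{-1/2} \int_{\mathbb{R}} \hat v(\xi', \xi_n) \, d\xi_n,
\end{equation*}
and Cauchy--Schwarz with the weight $ (1 + |\xi|^2)^{\pm s/2} $ produces
\begin{equation*}
\bigl| \widehat{v(\cdot, 0)}(\xi') \bigr|^2 \leq \Bigl( \int_{\mathbb{R}} (1 + |\xi|^2)^{-s} d\xi_n \Bigr) \Bigl( \int_{\mathbb{R}} (1 + |\xi|^2)^{s} | \hat v(\xi) |^2 d\xi_n \Bigr).
\end{equation*}
The first integral equals $ C_s (1 + |\xi'|^2)^{-s + 1/2} $ provided $ s > 1/2 $; multiplying by $ (1 + |\xi'|^2)^{s - 1/2} $ and integrating in $ \xi' $ yields the sharp bound $ \lVert v(\cdot, 0) \rVert_{H^{s - 1/2}(\mathbb{R}^{n-1})}^2 \leq C \lVert v \rVert_{H^s(\mathbb{R}^n)}^2 $, which in particular controls the weaker $ H^{s-1} $ norm. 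Density of smooth functions and summation over charts then extend $ T $ to a bounded operator on $ H^s(M, g) $ with constant $ K'' $ depending only on $ (\bar M, g) $ and $ s $.

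Third, for surjectivity I would build a bounded right inverse. Given boundary data $ f $ in the trace space, work chart by chart and set
\begin{equation*}
(Ef)(x', x_n) = \mathcal{F}^{-1}_{\xi' \to x'} \Bigl( \hat f(\xi') \, \psi(x_n \langle \xi' \rangle) \Bigr), \quad \langle \xi' \rangle = (1 + |\xi'|^2)^{1/2},
\end{equation*}
for a fixed $ \psi \in \calC_c^{\infty}(\mathbb{R}) $ with $ \psi(0) = 1 $. Then $ Ef(\cdot, 0) = f $ pointwise, and a direct computation using the scaling identity $ \int_0^\infty \langle \xi' \rangle^{2k} |\psi^{(j)}(x_n \langle \xi' \rangle)|^2 dx_n = C_{j,k} \langle \xi' \rangle^{2k - 1} $ together with Leibniz gives $ \lVert Ef \rVert_{H^s(\mathbb{R}^n_+)} \leq C \lVert f \rVert_{H^{s-1/2}(\mathbb{R}^{n-1})} $. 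Patching these local extensions by a partition of unity on $ \partial M $ and a smooth cutoff in the normal direction produces a global right inverse, which together with the sharp characterization of the range of $ T $ establishes surjectivity. The main obstacle is not the Fourier computation itself but the bookkeeping that transfers the flat estimates to $ (\bar M, g) $: one must check that intrinsic Sobolev norms in each chart are uniformly equivalent to flat Sobolev norms, which holds because the finite atlas can be chosen so that metric components, their inverses, and Christoffel symbols are uniformly bounded thanks to the compactness of $ \bar M $ and the smoothness of $ g $ up to $ \partial M $. The rest is standard, which is why the paper simply cites Taylor.
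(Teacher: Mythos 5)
The paper offers no proof of this proposition; it is cited verbatim from Taylor's book. Your argument is the standard proof of the trace theorem and is essentially what Taylor does: localize to a half-space with boundary charts and a partition of unity, prove the flat estimate by Fourier inversion in the normal variable plus Cauchy--Schwarz with the weight $(1+|\xi|^2)^{\pm s/2}$, and build the Seeley-type right inverse $\hat f(\xi')\psi(x_n\langle\xi'\rangle)$ for surjectivity. The chart-uniformity issue you flag at the end (bounded metric components, inverses, Christoffel symbols on a finite atlas) is the right thing to check and holds by compactness of $\bar M$ and smoothness of $g$ up to $\partial M$. So the proof is correct and matches the cited source in method.

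One remark worth making: your Fourier computation correctly produces the sharp range $H^{s-1/2}(\partial M)$, and your right inverse maps $H^{s-1/2}(\partial M)\to H^s(M,g)$, so what you actually establish is surjectivity of $T\colon H^s(M,g)\to H^{s-1/2}(\partial M,\imath^*g)$. The proposition as printed in the paper asserts surjectivity onto $H^{s-1}(\partial M,\imath^*g)$, which is a strictly larger space than $H^{s-1/2}$; taken literally this surjectivity is false, since every trace lands in $H^{s-1/2}$. Taylor's Proposition 4.5 in fact states the target as $H^{s-1/2}$, so the paper's $s-1$ is best read as a harmless weakening for the boundedness statement (the paper's Definition 2.1 only introduces integer-order spaces, which explains the downgrade) but should not be carried over to the surjectivity claim. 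Your proof silently corrects this by phrasing surjectivity in terms of ``the trace space,'' which is the right instinct.
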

\medskip

Next we list the results of both $ H^{s} $-type and $ W^{2, p} $-type elliptic regularities on $ (\bar{M}, g) $ with respect to the second order differential operator $ -a\Delta_{g} + A $ with any positive constant $ A > 0 $. It is clear that $ -a\Delta_{g} + A $ is injective on $ \partial M $ with trivial Dirichlet boundary condition, due to Fredholm dichotomy. 
\begin{theorem}\label{pre:thm1}
Let $ (\bar{M}, g) $ be a compact manifold with smooth boundary $ \partial M $. Let $ Lu = -a\Delta_{g} u + Au $ be the second order elliptic operator with positive constant $ A > 0 $.

(i) \cite[Ch.5, Prop.~11.2]{T} Assume $ u \in \calL^{2}(M, g) $. If $ Lu \in \calL^{2}(M, g) $ and $ u \in H^{1}(\partial M, \imath^{*}g) $, then $ u \in H^{2}(M, g) $ with the estimates
\begin{equation}\label{pre:eqn3}
\lVert u \rVert_{H^{s + 2}(M, g)} \leqslant C_{1} \left( \lVert Lu \rVert_{H^{s}(M, g)} + \lVert u \rVert_{H^{s + 1}(\partial M, \imath^{*}g)} + \lVert u \rVert_{\calL^{2}(M, g)} \right).
\end{equation}
The constant $ C_{1} $ depends only on $ s, (\bar{M}, g), L $ and is independent of $ u $.

(ii) \cite[Thm.~2.1]{XU5} Let $ f \in \calL^{p}(M, g), \tilde{f} \in W^{2, p}(M, g) $. Assume $ u \in H^{1}(M, g) $ be a weak solution of the following boundary value problem
\begin{equation}\label{pre:eqn4}
L u = f \; {\rm in} \; M, B_{d} u: = u = \tilde{f} \; {\rm on} \; \partial M.
\end{equation}
If, in addition, $ u \in \calL^{p}(M, g) $, then $ u \in W^{2, p}(M, g) $ with the following estimates
\begin{equation}\label{pre:eqn5}
\lVert u \rVert_{W^{2, p}(M, g)} \leqslant C_{2} \left( \lVert Lu \rVert_{\calL^{p}(M, g)} + \lVert  B_{d}u \rVert_{W^{2, p}(M, g)} \right).
\end{equation}
Here $ C_{2} $ depends on $ L, p $ and the manifold $ (\bar{M}, g) $ and is independent of $ u $.
\end{theorem}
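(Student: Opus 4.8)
The two estimates are the standard elliptic regularity statements for the Dirichlet problem attached to the coercive operator $L = -a\Delta_{g} + A$ with $A>0$, and the plan is to reduce both of them to model problems on a half-space by means of a fixed finite atlas and a subordinate partition of unity. First I would record the functional-analytic starting point: since $A>0$, the bilinear form $B(u,v)=\int_{M}\bigl(a\langle\nabla u,\nabla v\rangle_{g}+Auv\bigr)\,\dvol$ is coercive on $H_{0}^{1}(\bar M,g)$, so by Lax--Milgram $L:H_{0}^{1}\to H^{-1}$ is an isomorphism; in particular $L$ is Fredholm of index zero and injective under trivial Dirichlet data, which is exactly the remark preceding the theorem. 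This furnishes base-level solvability together with the crude $\calL^{2}\to H^{1}$ (respectively $\calL^{p}$) bounds that one needs before bootstrapping.

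For part (i): cover $\bar M$ by finitely many charts --- interior balls and boundary half-balls in which $\partial M$ is flattened to $\{x_{n}=0\}$ and $g$ becomes a smooth metric comparable to the Euclidean one --- and fix a partition of unity $\{\chi_{j}\}$. In an interior chart, $\chi_{j}u$ solves an elliptic equation on $\R^{n}$ with compactly supported right-hand side, and the standard interior estimate (difference quotients, or a Fourier-multiplier argument with G\r{a}rding's inequality) bounds $\lVert\chi_{j}u\rVert_{H^{s+2}}$. In a boundary chart, $\chi_{j}u$ solves the flattened equation on a half-ball with Dirichlet data (of class $H^{s+1}$) on $\{x_{n}=0\}$; I would apply the tangential difference-quotient method to gain one derivative in the $x_{1},\dots,x_{n-1}$ directions, then solve the equation algebraically for $\partial_{n}^{2}u$ in terms of the tangential second derivatives, the lower-order terms, and $Lu$, thereby recovering the normal regularity. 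Summing over the atlas, absorbing the commutator errors $[L,\chi_{j}]$ into lower-order norms, and iterating $s\mapsto s+1$ yields \eqref{pre:eqn3}; the constant $C_{1}$ is uniform because the atlas, the cutoffs, and the coefficient bounds are all fixed once $(\bar M,g)$ and $L$ are chosen.

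For part (ii): the same localization is used, but the gain of regularity now comes from the $L^{p}$ theory for elliptic operators rather than from Hilbert-space methods. I would first subtract an $W^{2,p}$-extension $E\tilde f$ of the boundary data (which exists and obeys $\lVert E\tilde f\rVert_{W^{2,p}}\lesssim\lVert\tilde f\rVert_{W^{2,p}}$ by the trace theory behind Proposition \ref{pre:prop1}), so that $w=u-E\tilde f$ has zero trace and solves $Lw=f-L(E\tilde f)\in\calL^{p}$. In interior charts the Calder\'on--Zygmund estimate $\lVert D^{2}w\rVert_{\calL^{p}}\lesssim\lVert\Delta w\rVert_{\calL^{p}}+\lVert w\rVert_{\calL^{p}}$ applies directly; in boundary charts one invokes the corresponding Agmon--Douglis--Nirenberg half-space estimate for the Dirichlet Laplacian on $\{x_{n}=0\}$ --- derived from the explicit Poisson/Green kernel together with Mikhlin--H\"ormander multiplier or singular-integral bounds --- and a small-perturbation argument to pass from the frozen-coefficient operator to $L$. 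Reassembling the pieces and adding $E\tilde f$ back gives \eqref{pre:eqn5}.

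The hard part in both statements is the boundary analysis. In (i), the delicate point is the careful bookkeeping of the tangential difference-quotient step and the algebraic recovery of $\partial_{n}^{2}u$, so that the estimate never becomes circular; in (ii), the real work is the half-space $L^{p}$ estimate for the Dirichlet problem and the control of the variable-coefficient error, which is precisely where $p\neq 2$ forces one out of the energy method and into singular-integral theory. Everything in the interior, and the functional-analytic solvability via coercivity, is routine, and in fact both conclusions are classical --- part (i) is \cite[Ch.5, Prop.~11.2]{T} and part (ii) is \cite[Thm.~2.1]{XU5} --- so in the paper it suffices to cite these sources.
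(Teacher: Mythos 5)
Your outline is correct and matches the standard localization--flattening proof that underlies both cited results, and you rightly note at the end that the paper does not reprove the theorem but simply cites \cite{T} for (i) and \cite{XU5} for (ii). The paper's only addition is Remark~\ref{pre:re1}: (a) in \cite{T} the last term of \eqref{pre:eqn3} is $\lVert u\rVert_{H^{1}(M,g)}$ rather than $\lVert u\rVert_{\calL^{2}(M,g)}$, and one passes to the $\calL^{2}$ norm by a Peter--Paul absorption (as in \cite{XU4}); (b) Theorem~2.1 of \cite{XU5} is stated for a Robin problem with boundary datum in $W^{1,p}$, so the Dirichlet version here needs the minor adjustment you describe, both resting on the local half-space estimates of \cite{Niren4}. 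Your sketch already handles (b) by working with the Dirichlet problem directly and subtracting a $W^{2,p}$-extension of the boundary data; you do not explicitly mention (a), but the absorption step you allude to when handling the commutator errors $[L,\chi_{j}]$ is precisely where that interpolation enters, so this is not a gap. In short, your proposal and the paper's treatment are in essential agreement: both defer to the classical elliptic regularity theory with only small bookkeeping adaptations.
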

\begin{remark}\label{pre:re1}
(i) In \cite[Ch.5, Prop.~11.2]{T}, the last term in (\ref{pre:eqn3}) is $ \lVert u \rVert_{H^{1}(M, g)} $. This term can be replaced by $ \lVert u \rVert_{\calL^{2}(M, g)} $ due to the Peter-Paul inequality, see e.g. \cite[Prop.~2.4]{XU4}.

(ii) The PDE in Theorem 2.1 of \cite{XU5} is with the Robin condition and $ \tilde{f} \in W^{1, p}(M, g) $. Here the PDE in (\ref{pre:eqn4}) is with Dirichlet boundary condition. The proof here is essentially the same as in \cite{XU5}, both relying on the local results due to \cite[Thm.~15.1]{Niren4}.
\end{remark}
\medskip

A strong maximum principle on $ (\bar{M}, g) $ with respect to the operator $ Lu = -a\Delta_{g} u + Au $ with $ A > 0 $ is required, which will be applied to the monotone iteration scheme for Dirichlet boundary problem.
\begin{proposition}(Strong Maximum Principle)\label{pre:prop2}\cite[Ch.5, Prop.~2.6]{T}
Let $ (\bar{M}, g) $ be a compact manifold with smooth boundary $ \partial M $ and $ Lu = -a\Delta_{g} u + Au $ with constant $ A > 0 $. Assume $ u \in \calC^{2}(M) \cap \calC^{0}(\bar{M}) $. 

(i) If 
\begin{equation*}
Lu \geqslant 0 \; {\rm in} \;  M,   u \geqslant 0 \; {\rm on} \; \partial M,
\end{equation*}
then $ u \geqslant 0 $ on M;

(ii) If 
\begin{equation*}
Lu \leqslant 0 \; {\rm in} \;  M,   u \leqslant 0 \; {\rm on} \; \partial M,
\end{equation*}
then $ u \leqslant 0 $ on M.
\end{proposition}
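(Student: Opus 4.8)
The plan is to first reduce (ii) to (i) and then establish (i) by the classical interior-extremum argument. For the reduction, observe that if $Lu \leqslant 0$ in $M$ and $u \leqslant 0$ on $\partial M$, then $v := -u \in \calC^{2}(M) \cap \calC^{0}(\bar{M})$ satisfies $Lv = -Lu \geqslant 0$ in $M$ and $v \geqslant 0$ on $\partial M$; once (i) is known, applying it to $v$ yields $v \geqslant 0$, i.e. $u \leqslant 0$ on $\bar{M}$. So everything reduces to proving (i).

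To prove (i), I would argue by contradiction. Set $m := \min_{\bar{M}} u$, which exists because $u$ is continuous on the compact set $\bar{M}$; suppose $m < 0$. Since $u \geqslant 0$ on $\partial M$, the minimum is not attained on $\partial M$, hence there is an interior point $x_{0} \in M$ at which $u(x_{0}) = m < 0$. As $u \in \calC^{2}$ near $x_{0}$ and $x_{0}$ is an interior minimum, the Hessian $\nabla^{2} u(x_{0})$ is positive semidefinite as a symmetric $2$-tensor; contracting with the positive-definite tensor $g^{ij}(x_{0})$ gives $\Delta_{g} u(x_{0}) = g^{ij}(x_{0}) \nabla_{i}\nabla_{j} u(x_{0}) \geqslant 0$. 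Then, using $a > 0$ and $A > 0$,
\begin{equation*}
Lu(x_{0}) = -a\,\Delta_{g} u(x_{0}) + A\,u(x_{0}) \leqslant A\,m < 0,
\end{equation*}
which contradicts $Lu \geqslant 0$ in $M$. Hence $m \geqslant 0$, i.e. $u \geqslant 0$ on $\bar{M}$, proving (i).

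The argument is elementary and I anticipate no serious obstacle; the two points deserving attention are the localization of the extremum — guaranteed by compactness of $\bar{M}$ together with the sign hypothesis on $\partial M$ — and the inequality $\Delta_{g} u(x_{0}) \geqslant 0$ at an interior minimum. The decisive structural feature is that the zeroth-order coefficient $A$ is a \emph{positive} constant, so that the term $A\,u(x_{0})$ carries the sign that produces the contradiction; nothing is needed about the scalar curvature or the geometry beyond this. If one wanted the genuinely strong conclusion — that on each connected component of $M$ the function $u$ is either identically zero or strictly positive — the additional input would be E. Hopf's strong maximum principle and boundary-point lemma, applied to $-a\Delta_{g} + A$ expressed in local coordinates as a uniformly elliptic operator with smooth (hence locally bounded) coefficients and nonnegative zeroth-order term. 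For the stated conclusion $u \geqslant 0$, however, the elementary argument above suffices, and this is precisely the form used later in the monotone iteration scheme.
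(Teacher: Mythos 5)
Your proof is correct, and since the paper merely cites \cite[Ch.5, Prop.~2.6]{T} without reproducing an argument, what you have written is the elementary classical proof that Taylor's proposition encapsulates. The reduction of (ii) to (i) via $v = -u$ is standard, and the core step is right: at an interior minimum $x_{0}$ with $u(x_{0}) = m < 0$, the coordinate Hessian is positive semidefinite and the Christoffel correction terms vanish because $\nabla u(x_{0}) = 0$, so $\Delta_{g} u(x_{0}) = g^{ij}(x_{0}) \partial_{i}\partial_{j} u(x_{0}) \geqslant 0$ (contraction of a positive definite matrix with a positive semidefinite one), whence $Lu(x_{0}) \leqslant Am < 0$, contradicting $Lu \geqslant 0$ in $M$. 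You are also right to flag that the sign $A > 0$ is the decisive hypothesis: with a zeroth-order coefficient of the wrong sign the conclusion would fail. One small remark worth making explicit, which you already gesture at: despite being labeled ``Strong Maximum Principle'' (following Taylor), the stated conclusion $u \geqslant 0$ is really the weak/comparison form; the genuinely strong dichotomy ($u \equiv 0$ or $u > 0$ on each component of $M$) would require Hopf's lemma, but the weak form is precisely what the monotone iteration argument in \S3 invokes.
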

\medskip

We set
\begin{equation}\label{pre:eqn6}
\Box_{g} u : = -a\Delta_{g} u + S_{g} u
\end{equation}
to be the conformal Laplacian. We discuss the eigenvalue problems of $ \Box_{g} $ with respect to both the Dirichlet condition and Robin condition.
\begin{proposition}\label{pre:prop3}\cite[Lemma~1.1]{ESC}
Let $ (\bar{M}, g) $ be a compact Riemannian manifold with smooth boundary $ \partial M $. Let $ S_{g}, h_{g} $ be the scalar curvature and mean curvature, respectively. The following eigenvalue problem 
\begin{equation}\label{pre:eqn7}
-a\Delta_{g} \varphi + S_{g} \varphi = \eta_{1} \varphi \; {\rm in} \; M, B_{g} \varphi : = \frac{\partial \varphi}{\partial \nu} + \frac{2}{p - 2} h_{g} \varphi = 0 \; {\rm on} \; \partial M.
\end{equation}
admits a real, smooth, positive solution $ \varphi \in \calC^{\infty}(\bar{M}) $. Here $ \eta_{1} $ is the first eigenvalue of conformal Laplacian $ \Box_{g} $ with Robin condition $ B_{g} u = 0 $.
\end{proposition}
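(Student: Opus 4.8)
The plan is to realize $\eta_{1}$ as the minimum of a Rayleigh quotient, to produce an eigenfunction by the direct method in the calculus of variations, and then to upgrade it to a smooth, strictly positive function by elliptic regularity and the maximum principle. Testing $-a\Delta_{g}\varphi + S_{g}\varphi = \eta_{1}\varphi$ against $v \in H^{1}(M,g)$, integrating by parts, and using the Robin condition $\partial\varphi/\partial\nu = -\tfrac{2}{p-2}h_{g}\varphi$, one sees that solutions of \eqref{pre:eqn7} are exactly the critical points of
\begin{equation*}
Q(u) = \frac{\mathcal{E}(u)}{\int_{M} u^{2}\,\dvol}, \qquad \mathcal{E}(u) := a\int_{M} \lvert \nabla u \rvert^{2}\,\dvol + \int_{M} S_{g} u^{2}\,\dvol + a\,\tfrac{2}{p-2}\int_{\partial M} h_{g} u^{2}\,dS,
\end{equation*}
and one sets $\eta_{1} := \inf\{ Q(u) : u \in H^{1}(M,g),\ u \not\equiv 0 \}$. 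The first point to check is that this infimum is finite. Since $S_{g}$ and $h_{g}$ are bounded on the compact manifold $\bar{M}$, it suffices to control $\int_{\partial M} u^{2}\,dS$; combining the trace estimate of Proposition \ref{pre:prop1} (with $s = 1$) with the Peter--Paul inequality (cf. Remark \ref{pre:re1}) gives, for each $\epsilon > 0$, a constant $C_{\epsilon}$ with $\lVert u \rVert_{\calL^{2}(\partial M)}^{2} \leqslant \epsilon \lVert \nabla u \rVert_{\calL^{2}(M,g)}^{2} + C_{\epsilon}\lVert u \rVert_{\calL^{2}(M,g)}^{2}$. Taking $\epsilon$ small absorbs the boundary integral into the gradient term and yields $\mathcal{E}(u) \geqslant \tfrac{a}{2}\lVert \nabla u \rVert_{\calL^{2}(M,g)}^{2} - C\lVert u \rVert_{\calL^{2}(M,g)}^{2}$; hence $Q$ is bounded below, and on the constraint $\{\lVert u \rVert_{\calL^{2}(M)} = 1\}$ every minimizing sequence is bounded in $H^{1}(M,g)$.

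Next I would run the direct method. Let $(u_{k})$ be a minimizing sequence with $\lVert u_{k} \rVert_{\calL^{2}(M)} = 1$; being bounded in $H^{1}(M,g)$, it has a subsequence with $u_{k} \rightharpoonup \varphi$ weakly in $H^{1}(M,g)$, strongly in $\calL^{2}(M,g)$ by Rellich--Kondrachov, and strongly in $\calL^{2}(\partial M, \imath^{*}g)$ because the trace map $H^{1}(M,g) \to \calL^{2}(\partial M)$ is compact (it factors as $H^{1}(M,g) \xrightarrow{T} H^{1/2}(\partial M) \hookrightarrow \calL^{2}(\partial M)$, the last inclusion being compact on the closed manifold $\partial M$). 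Then $\lVert \varphi \rVert_{\calL^{2}(M)} = 1$, the integrals $\int_{M} S_{g} u_{k}^{2}$ and $\int_{\partial M} h_{g} u_{k}^{2}$ pass to the limit, and $\int_{M}\lvert \nabla\varphi \rvert^{2} \leqslant \liminf_{k}\int_{M}\lvert \nabla u_{k}\rvert^{2}$ by weak lower semicontinuity; hence $\mathcal{E}(\varphi) \leqslant \liminf_{k}\mathcal{E}(u_{k}) = \eta_{1}$, so $\varphi$ is a minimizer. Since $\lvert \nabla \lvert\varphi\rvert \rvert = \lvert \nabla\varphi \rvert$ a.e. and $\varphi^{2} = \lvert\varphi\rvert^{2}$, the function $\lvert\varphi\rvert$ is also a minimizer, so we may assume $\varphi \geqslant 0$. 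The Euler--Lagrange equation of $Q$ at $\varphi$ reads, for all $v \in H^{1}(M,g)$,
\begin{equation*}
a\int_{M} \langle \nabla\varphi, \nabla v \rangle_{g}\,\dvol + \int_{M} S_{g}\varphi v\,\dvol + a\,\tfrac{2}{p-2}\int_{\partial M} h_{g}\varphi v\,dS = \eta_{1}\int_{M} \varphi v\,\dvol,
\end{equation*}
which is precisely the weak form of \eqref{pre:eqn7}.

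It remains to show $\varphi \in \calC^{\infty}(\bar{M})$ and $\varphi > 0$. The Robin boundary problem for $-a\Delta_{g} + A$ (with $A$ a positive constant) satisfies the Lopatinski--Shapiro complementing condition, so it enjoys the same $H^{s}$-regularity as the operators in Theorem \ref{pre:thm1}; applying this to $-a\Delta_{g}\varphi + A\varphi = (A + \eta_{1} - S_{g})\varphi$ with $B_{g}\varphi = 0$ and bootstrapping on $s$ (multiplication by the smooth functions $S_{g}, h_{g}$ preserves every $H^{s}$) gives $\varphi \in H^{s}(M,g)$ for all $s$, hence $\varphi \in \calC^{\infty}(\bar{M})$ by Sobolev embedding. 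For positivity, fix $A > 0$ with $A + \eta_{1} - S_{g} > 0$ on $\bar{M}$; then $(-a\Delta_{g} + A)\varphi = (A + \eta_{1} - S_{g})\varphi \geqslant 0$ with $\varphi \geqslant 0$, so Proposition \ref{pre:prop2} gives $\varphi \geqslant 0$ and the strong maximum principle gives $\varphi > 0$ in $M$ (it is not identically zero, since $\lVert \varphi \rVert_{\calL^{2}(M)} = 1$); finally, if $\varphi(x_{0}) = 0$ at some $x_{0} \in \partial M$, Hopf's boundary-point lemma forces $\partial\varphi/\partial\nu(x_{0}) < 0$, contradicting the Robin condition $\partial\varphi/\partial\nu(x_{0}) = -\tfrac{2}{p-2}h_{g}(x_{0})\varphi(x_{0}) = 0$. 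Thus $\varphi \in \calC^{\infty}(\bar{M})$ is strictly positive.

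Since $\eta_{1}$ is by construction the bottom of the spectrum of the self-adjoint operator attached to $(\Box_{g}, B_{g})$, which has discrete spectrum on the compact $\bar{M}$, it is indeed its first eigenvalue. The one genuinely delicate step is the very first one: the boundary integral in $\mathcal{E}$ has no definite sign, so boundedness below of $Q$ — and hence the very existence of a minimizer — rests entirely on the compactness of the trace embedding $H^{1}(M,g) \hookrightarrow \calL^{2}(\partial M)$ and on the interpolation inequality that lets the gradient term dominate it; everything after that is the standard direct-method-plus-elliptic-regularity package.
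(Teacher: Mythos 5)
The paper does not prove this proposition itself but cites Escobar \cite[Lemma~1.1]{ESC}; your argument — variational characterization via the Rayleigh quotient, coercivity obtained by absorbing the boundary term through the compact trace $H^{1}(M,g)\hookrightarrow\calL^{2}(\partial M)$, the direct method with $\lvert\varphi\rvert$ as minimizer, elliptic bootstrap, strong maximum principle, and Hopf's lemma at the boundary — is precisely the standard route that Escobar's proof follows, and it is also the same scheme the paper uses for the Dirichlet analogue in Proposition \ref{pre:prop4}. Your proof is correct and matches the intended approach, including the key observation that the indefinite boundary term is the only genuine obstruction and is handled by the trace-interpolation estimate.
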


\begin{proposition}\label{pre:prop4}
Let $ (\bar{M}, g) $ be a compact Riemannian manifold with smooth boundary $ \partial M $. Let $ S_{g} $ be the scalar curvature. The following eigenvalue problem 
\begin{equation}\label{pre:eqn8}
-a\Delta_{g} \tilde{\varphi} + S_{g} \tilde{\varphi} = \tilde{\eta}_{1} \tilde{\varphi} \; {\rm in} \; M, B_{d} \tilde{\varphi} : = \tilde{\varphi} = 0 \; {\rm on} \; \partial M.
\end{equation}
admits a real, smooth, positive solution $ \tilde{\varphi} \in \calC^{\infty}(\bar{M}) $. Here $ \tilde{\eta}_{1} $ is the first eigenvalue of conformal Laplacian $ \Box_{g} $ with Dirichlet condition $ B_{d} u = 0 $.
\end{proposition}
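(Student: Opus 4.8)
The plan is to produce $\tilde\varphi$ by direct minimization of the Dirichlet energy associated to the conformal Laplacian $\Box_g$, in exact parallel with the Robin eigenvalue problem of Proposition \ref{pre:prop3}. First I would set
\begin{equation*}
\tilde\eta_1 := \inf\left\{ \int_M \left( a\lvert \nabla u \rvert_g^2 + S_g u^2 \right) \dvol \ : \ u \in H_0^1(\bar M, g), \ \lVert u \rVert_{\calL^2(M,g)} = 1 \right\}
\end{equation*}
and check that this infimum is finite and attained. Since $\bar M$ is compact, $S_g$ is bounded, so the quadratic form is bounded below on the $\calL^2$-unit sphere of $H_0^1(\bar M, g)$ and $\tilde\eta_1 > -\infty$. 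For a minimizing sequence $\{u_k\}$, the energy bound together with $\lVert u_k \rVert_{\calL^2} = 1$ yields a uniform $H^1(M,g)$-bound, so up to a subsequence $u_k \rightharpoonup \tilde\varphi$ weakly in $H_0^1(\bar M, g)$ and, by the Rellich--Kondrachov compactness of $H^1(M,g) \hookrightarrow \calL^2(M,g)$, strongly in $\calL^2(M,g)$. Strong $\calL^2$-convergence preserves the constraint and sends $\int_M S_g u_k^2 \dvol \to \int_M S_g \tilde\varphi^2 \dvol$, while weak lower semicontinuity of $u \mapsto \int_M a\lvert\nabla u\rvert_g^2\dvol$ gives $\int_M (a\lvert\nabla\tilde\varphi\rvert_g^2 + S_g\tilde\varphi^2)\dvol \leqslant \tilde\eta_1$, so $\tilde\varphi$ attains the infimum.

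Writing the Lagrange-multiplier (Euler--Lagrange) equation for this constrained minimum shows that $\tilde\varphi$ weakly solves $-a\Delta_g\tilde\varphi + S_g\tilde\varphi = \tilde\eta_1\tilde\varphi$ in $M$, and $\tilde\varphi \in H_0^1(\bar M, g)$ means $\tilde\varphi = 0$ on $\partial M$ in the trace sense of Proposition \ref{pre:prop1}; testing the weak equation against $\tilde\varphi$ identifies the multiplier as $\tilde\eta_1$. To promote regularity I would fix a constant $A > 0$ with $A + \tilde\eta_1 - S_g > 0$ on $\bar M$ (possible because $S_g$ is bounded) and rewrite the equation as $(-a\Delta_g + A)\tilde\varphi = (A + \tilde\eta_1 - S_g)\tilde\varphi =: f$. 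Then $f \in \calL^2(M,g)$ and $\tilde\varphi|_{\partial M} = 0 \in H^1(\partial M, \imath^*g)$, so Theorem \ref{pre:thm1}(i) gives $\tilde\varphi \in H^2(M,g)$; since $S_g \in \calC^\infty(\bar M)$ this upgrades $f$ to $H^2(M,g)$, and iterating Theorem \ref{pre:thm1}(i) together with the Sobolev embedding $H^s(M,g)\hookrightarrow \calC^k(\bar M)$ yields $\tilde\varphi \in \calC^\infty(\bar M)$.

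For positivity, the point is that $\lvert\nabla\lvert\tilde\varphi\rvert\rvert_g = \lvert\nabla\tilde\varphi\rvert_g$ almost everywhere and $\lvert\tilde\varphi\rvert^2 = \tilde\varphi^2$, so $\lvert\tilde\varphi\rvert \in H_0^1(\bar M,g)$ is again a minimizer; by the previous paragraph it is a smooth nonnegative solution of $(-a\Delta_g + A)\lvert\tilde\varphi\rvert = (A + \tilde\eta_1 - S_g)\lvert\tilde\varphi\rvert \geqslant 0$ in $M$ with $\lvert\tilde\varphi\rvert = 0$ on $\partial M$ and $\lvert\tilde\varphi\rvert \not\equiv 0$. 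The maximum principle of Proposition \ref{pre:prop2} gives $\lvert\tilde\varphi\rvert \geqslant 0$; invoking the strong (Hopf-type) form of \cite[Ch.5, Prop.~2.6]{T} rules out an interior zero, so $\lvert\tilde\varphi\rvert > 0$ throughout $M$. In particular $\tilde\varphi$ does not change sign, and after replacing $\tilde\varphi$ by $-\tilde\varphi$ if necessary we obtain the desired positive smooth solution.

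I expect the only genuinely delicate point to be this last step --- upgrading the nonnegativity furnished directly by Proposition \ref{pre:prop2} to strict interior positivity via the strong maximum principle --- together with the bookkeeping of the shift by $A$ needed to make the zeroth-order coefficient $A + \tilde\eta_1 - S_g$ sign-definite before applying Theorem \ref{pre:thm1} and Proposition \ref{pre:prop2}. Everything else is the Dirichlet transcription of Escobar's argument for Proposition \ref{pre:prop3}, with the trace space $H^{s-1}(\partial M, \imath^*g)$ and the Dirichlet elliptic estimate Theorem \ref{pre:thm1}(i) replacing their Robin counterparts.
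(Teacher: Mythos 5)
Your proposal follows exactly the paper's route: characterize $\tilde\eta_1$ by the Rayleigh quotient over $H_0^1(M,g)$, extract a minimizer via weak $H^1$ compactness and Rellich--Kondrachov, pass to the Euler--Lagrange equation, then use elliptic regularity (bootstrapping through Theorem \ref{pre:thm1}) and the strong maximum principle for smoothness and interior positivity. The only cosmetic difference is that the paper minimizes directly over nonnegative $u$ whereas you minimize over all of $H_0^1$ and then observe that $\lvert\tilde\varphi\rvert$ is also a minimizer; these are equivalent, and your version makes the positivity step more transparent.
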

\begin{proof} It is standard that the first eigenvalue $ \tilde{\eta}_{1} $ is characterized by
\begin{equation*}
\tilde{\eta}_{1} = \inf_{u \geqslant 0, u \in H_{0}^{1}(M, g)} \frac{\int_{M} a \lvert \nabla_{g} u \rvert^{2} \dvol + \int_{M} S_{g} u^{2} \dvol}{\int_{M} u^{2} \dvol} : = \inf_{u \geqslant 0, u \in H_{0}^{1}(M, g)} E'(u).
\end{equation*}
Without loss of generality, we may characterize $ \tilde{\eta}_{1} $ among all $ u \in H_{0}^{1}(M, g)$ with $ \lVert u \rVert_{\calL^{2}(M, g)} = 1 $. By Poincar\'e inequality on compact manifolds with Dirichlet boundary condition \cite{LY}, we apply standard calculus of variation and it follows that the minimizing sequence $ \lbrace u_{k} \rbrace $ of $ E'(u) $ converges weakly in $ H^{1} $-norm, up to a subsequence, to some limit $ \tilde{\varphi} \geqslant 0 $ which solves (\ref{pre:eqn8}). Applying elliptic regularity and maximum principle, it follows that the smooth function $ \tilde{\varphi} > 0 $ on $ M $ solves (\ref{pre:eqn8}). The first eigenvalue is determined as $ \tilde{\eta}_{1} = E'(\tilde{\varphi}) $.
\end{proof}
\medskip

The boundary Yamabe equation with minimal boundary case
\begin{equation}\label{pre:eqn9}
\begin{split}
\Box_{g}u & : =  -a\Delta_{g} u + S_{g} u = \lambda u^{p-1} \; {\rm in} \; M; \\
B_{g} u & : = \frac{\partial u}{\partial \nu}  + \frac{2}{p-2} h_{g} u = 0 \; {\rm on} \; \partial M.
\end{split}
\end{equation}
plays an important role in solving the Dirichlet problem (\ref{intro:eqn1}). Here $ S_{g}, h_{g} $ are scalar and mean curvatures, respectively. As discussed in \cite{XU4}, the results are classified by the sign of first eigenvalue $ \eta_{1} $ in Proposition \ref{pre:prop3}.
\begin{theorem}\label{pre:thm2}\cite{XU4}
Let $ (\bar{M}, g) $ be a compact manifold with smooth boundary $ \partial M $, $ \dim \bar{M} \geqslant 3 $. Let $ \eta_{1} $ be the first eigenvalue of the boundary value problem $ \Box_{g} u = \eta_{1} u $ in $ M $, $ B_{g} u = 0 $ on $ \partial M $. Then
\begin{enumerate}[(i).]
\item If $ \eta_{1} = 0 $, then (\ref{pre:eqn9}) has a real, positive solution $ u \in \calC^{\infty}(\bar{M}) $ with $ \lambda = 0 $; 
\item If $ \eta_{1} < 0 $, then (\ref{pre:eqn9}) has a real, positive solution $ u \in \calC^{\infty}(\bar{M}) $ with $ \lambda < 0 $; 
\item If $ \eta_{1} > 0 $, then (\ref{pre:eqn9}) has a real, positive solution $ u \in \calC^{\infty}(\bar{M}) $ with $ \lambda > 0 $.
\end{enumerate}
\end{theorem}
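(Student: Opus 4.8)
The plan is to solve (\ref{pre:eqn9}) by the direct method of the calculus of variations, after a conformal normalization that makes the boundary minimal and fixes the sign of the background scalar curvature. First I would invoke Proposition \ref{pre:prop3} to obtain a positive $ \varphi \in \calC^{\infty}(\bar M) $ with $ \Box_g \varphi = \eta_1 \varphi $ in $ M $ and $ B_g \varphi = 0 $ on $ \partial M $. Replacing $ g $ by $ \varphi^{p-2} g $ and applying the conformal transformation laws for scalar and mean curvature, one arranges that $ h_g \equiv 0 $ (so that $ B_g $ becomes the plain Neumann operator $ \partial / \partial \nu $) and that $ S_g $ has the same sign as $ \eta_1 $ everywhere on $ \bar M $; in particular $ S_g \equiv 0 $ when $ \eta_1 = 0 $. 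Since a positive solution of (\ref{pre:eqn9}) for $ \varphi^{p-2} g $, multiplied by $ \varphi $, is a positive solution for $ g $, it suffices to work with this normalized metric. When $ \eta_1 = 0 $ the normalized metric has $ S_g \equiv 0 $ and $ h_g \equiv 0 $, so $ u \equiv 1 $ solves the normalized equation with $ \lambda = 0 $; this proves (i).

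For (ii), with $ \eta_1 < 0 $ and hence $ S_g < 0 $, I would minimize
\begin{equation*}
Q(u) = \frac{\int_M \left( a |\nabla u|^2 + S_g u^2 \right) \dvol}{\left( \int_M |u|^p \dvol \right)^{2/p}}
\end{equation*}
over $ u \in H^1(M, g) $ with $ \lVert u \rVert_{p,g} = 1 $. Constant functions make $ Q < 0 $, so the infimum $ Q_0 $ is negative; and since $ p > 2 $ and $ \bar M $ is compact, $ \int_M S_g u^2 \geqslant -C \lVert u \rVert_{2,g}^2 \geqslant -C' $, so $ Q $ is bounded below and a minimizing sequence is bounded in $ H^1(M, g) $. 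Rellich compactness of $ H^1(M, g) \hookrightarrow \calL^2(M, g) $ and weak lower semicontinuity produce a weak limit $ u \geqslant 0 $ with $ \int_M (a|\nabla u|^2 + S_g u^2) \leqslant Q_0 $; it is not identically zero (otherwise $ \liminf Q \geqslant 0 $), and since $ Q_0 < 0 $ this forces $ \lVert u \rVert_{p,g} = 1 $ and $ Q(u) = Q_0 $. The Euler--Lagrange equation is exactly (\ref{pre:eqn9}) with $ \lambda = Q_0 < 0 $ and the natural Neumann boundary condition $ B_g u = 0 $ (which is $ \partial u/\partial\nu = 0 $ since $ h_g \equiv 0 $); elliptic regularity (Theorem \ref{pre:thm1}) and bootstrapping give $ u \in \calC^{\infty}(\bar M) $, and the strong maximum principle together with the Hopf lemma give $ u > 0 $ on $ \bar M $.

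Case (iii), $ \eta_1 > 0 $, is the genuine difficulty and is where the bulk of \cite{XU4} lies. Now $ S_g > 0 $, so $ Q > 0 $ on every nonzero $ u $ and the Yamabe-type constant $ Q(M, \partial M) := \inf \{ Q(u) : \lVert u \rVert_{p,g} = 1 \} $ is positive; but at the critical exponent $ p = \frac{2n}{n-2} $ the embedding $ H^1(M, g) \hookrightarrow \calL^p(M, g) $ is not compact, and a minimizing sequence may concentrate, with limiting profile the standard bubble on $ \R^n $ (interior blow-up) or on the half-space $ \R^n_+ $ with minimal boundary (boundary blow-up). The classical remedy is to establish the strict inequality $ Q(M, \partial M) < Q(\R^n_+) $, the latter being the Yamabe-type quotient of the flat half-space---equivalently of the round hemisphere with its totally geodesic boundary, and itself strictly below $ Q(S^n) $, so that boundary concentration is the binding threat; below this threshold a concentration--compactness argument recovers a minimizer, hence a positive smooth solution of (\ref{pre:eqn9}) with $ \lambda = Q(M, \partial M) > 0 $. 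Establishing the strict inequality is the crux: one tests $ Q $ against truncated, conformally rescaled bubbles centered at a suitably chosen $ q \in \bar M $ and expands---when a local conformal invariant near $ q $ is nonzero (for instance the trace-free second fundamental form of $ \partial M $, or, for an interior point, the Weyl tensor) the expansion already yields the gain in low dimensions, while in the remaining umbilic or locally conformally flat cases one must invoke a positive mass theorem for compact manifolds with boundary to supply the needed negative correction term. Carrying this out uniformly over all dimensions and boundary configurations is the substance of \cite{XU4}, and I expect this test-function / positive-mass estimate---beating the hemisphere threshold---to be by far the hardest part; an approach via the monotone iteration favored elsewhere in this paper would run into the same wall, since in the positive case one still has to rule out the collapse of the iterates caused by the lack of compactness at the critical exponent.
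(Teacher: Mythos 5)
The statement you are proving is not actually proved in this paper; Theorem~\ref{pre:thm2} is quoted verbatim from \cite{XU4}, so there is no in-paper argument to compare yours against. That said, your proposal can still be assessed on its own terms and against the paper's evident methodology.

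Your reduction via the Escobar eigenfunction (Proposition~\ref{pre:prop3}) is correct: under $g' = \varphi^{p-2}g$ with $B_g\varphi = 0$ one has $h_{g'}\equiv 0$ and $S_{g'} = \eta_1\varphi^{2-p}$, so $S_{g'}$ carries the sign of $\eta_1$ everywhere on $\bar M$, and a positive solution for $g'$ converts back to one for $g$ by multiplying by $\varphi$. Part (i) then holds trivially with $u\equiv 1$. Your part (ii) is also correct, including the subtle point: since $Q_0<0$, the chain $Q_0\leqslant Q(u)\leqslant Q_0/\lVert u\rVert_{p,g}^2\leqslant Q_0$ forces $\lVert u\rVert_{p,g}=1$, so the loss of compactness at the critical exponent is harmless in the negative case and the weak limit is a genuine minimizer. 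Strong maximum principle plus Hopf's lemma give positivity up to the boundary, and Theorem~\ref{pre:thm1} gives smoothness. These two cases are complete.

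Part (iii) is, as you say, the substance of \cite{XU4}, and you do not prove it---you describe the classical Escobar-type programme: establish $Q(M,\partial M)<Q(\R^n_+)$ by expanding the quotient against truncated bubbles, using either a nonzero local conformal invariant (trace-free second fundamental form, or Weyl tensor at an interior point) or a positive mass theorem for manifolds with boundary, and then run concentration--compactness below that threshold. That is a faithful description of the route due to Escobar, Marques, Almaraz, Brendle--Chen and others, but it is only a roadmap, not a proof. I would also flag a mismatch in flavour with the present paper's methodology: the author repeatedly emphasizes that the companion papers (\cite{XU4}, \cite{XU5}, \cite{XU6}) proceed by monotone iteration and local perturbation rather than by global calculus of variations and test-function/positive-mass estimates, precisely because the latter tools are unavailable for the non-conformally-invariant Dirichlet functional~(\ref{intro:eqn2}) considered here. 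So while your sketch is a standard and defensible plan for case (iii), it is quite possibly \emph{not} the route taken in \cite{XU4}, and you should not present the hemisphere-threshold/positive-mass argument as if it were the cited proof without checking. The honest summary is: (i) and (ii) are correct and essentially elementary after your normalization; (iii) is an acknowledged gap, and the strategy you describe for filling it is the classical one rather than necessarily the one in the cited source.
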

\medskip

In addition, we also need results concerning the sign of mean curvature on $ \partial M $ under conformal change. 
\begin{theorem}\label{pre:thm3}
Let $ (\bar{M}, g) $ be a compact manifold with smooth boundary,

(i) there exists a conformal metric $ \tilde{g} \in [g] $ with mean curvature $ \tilde{h} > 0 $ everywhere on $ \partial M $;

(ii) there exists a conformal metric $ \tilde{g} \in [g] $ with mean curvature $ \tilde{h} < 0 $ everywhere on $ \partial M $.
\end{theorem}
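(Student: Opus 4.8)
The plan is to reduce both statements to an elementary construction on the boundary, exploiting the fact that \emph{no} condition is placed on the scalar curvature of $ \tilde g $, so that the conformal factor may be chosen completely freely.

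First recall the conformal transformation law for the mean curvature: if $ \tilde g = \varphi^{p-2} g $ with $ \varphi \in \calC^{\infty}(\bar M) $, $ \varphi > 0 $, then
\begin{equation*}
\tilde h = \frac{2}{n - 2} \, \varphi^{- \frac{n}{n - 2}} \left( \frac{\partial \varphi}{\partial \nu} + \frac{n - 2}{2} \, h_{g} \varphi \right) = \frac{2}{n-2} \, \varphi^{-\frac{n}{n-2}} \, B_{g} \varphi \qquad \text{on } \partial M,
\end{equation*}
where $ B_{g} $ is the Robin operator of Proposition \ref{pre:prop3} (recall $ \tfrac{2}{p-2} = \tfrac{n-2}{2} $). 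Hence $ \tilde h(x) $ has the same sign as $ B_{g}\varphi(x) $ at every $ x \in \partial M $, and it suffices to produce a positive $ \varphi \in \calC^{\infty}(\bar M) $ with $ B_{g}\varphi > 0 $ everywhere on $ \partial M $ for part (i), and with $ B_{g}\varphi < 0 $ everywhere on $ \partial M $ for part (ii); such a $ \varphi $ defines a conformal metric $ \tilde g = \varphi^{p-2} g \in [g] $ with the desired property, with no further condition to verify.

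For the construction I would fix a collar neighborhood of $ \partial M $ and let $ t $ denote the distance to $ \partial M $, which is smooth there and satisfies $ \nabla t = -\nu $ on $ \partial M $. Extend $ t $ to some $ \tau \in \calC^{\infty}(\bar M) $, $ \tau \geqslant 0 $, agreeing with $ t $ near $ \partial M $ (multiply $ t $ by a cutoff supported in the collar that equals $ 1 $ on a smaller collar). For part (i) set $ \varphi = e^{-\beta \tau} $ and for part (ii) set $ \varphi = e^{\beta \tau} $, where $ \beta > 0 $ is a constant to be fixed; in either case $ \varphi $ is smooth and positive on $ \bar M $. Along $ \partial M $ one computes $ \varphi = 1 $ and $ \frac{\partial \varphi}{\partial \nu} = + \beta $ in case (i), $ \frac{\partial \varphi}{\partial \nu} = -\beta $ in case (ii), so that
\begin{equation*}
B_{g}\varphi = \pm \beta + \frac{n-2}{2} \, h_{g} \qquad \text{on } \partial M .
\end{equation*}
Since $ \partial M $ is compact and $ h_{g} $ is continuous, $ \max_{\partial M}|h_{g}| < \infty $; choosing $ \beta > \frac{n-2}{2}\max_{\partial M}|h_{g}| $ makes $ B_{g}\varphi > 0 $ on all of $ \partial M $ in case (i) and $ B_{g}\varphi < 0 $ on all of $ \partial M $ in case (ii). By the reduction above, $ \tilde g = \varphi^{p-2} g $ is then the sought conformal metric.

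The step demanding the most care — and the only place a sign error could creep in — is the transformation law above: one must check that, with $ \nu $ the \emph{outward} unit normal, $ \tilde h $ carries the same sign as $ B_{g}\varphi $ and that the coefficient of $ h_{g} $ matches the operator $ B_{g} $ of Proposition \ref{pre:prop3}, and likewise that $ \nabla t = -\nu $ (not $ +\nu $) on $ \partial M $. Beyond that there is no analytic difficulty: in contrast with the boundary Yamabe problem, $ \varphi $ need not satisfy any equation in $ M $, so the problem amounts to prescribing the $ 1 $-jet of a positive function along $ \partial M $. If one prefers to avoid the distance function, one may instead invoke a standard extension theorem to obtain a positive $ \varphi \in \calC^{\infty}(\bar M) $ with $ \varphi \equiv 1 $ and $ \frac{\partial \varphi}{\partial \nu} \equiv \pm\beta $ on $ \partial M $, and conclude in the same way.
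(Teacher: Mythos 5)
Your proposal is correct. Note first that the paper states Theorem \ref{pre:thm3} without proof or citation, treating it as elementary, so there is nothing in the source to compare against; your argument supplies a complete proof. You correctly match the mean-curvature transformation law $\tilde h = \frac{2}{n-2}\varphi^{-\frac{n}{n-2}} B_{g}\varphi$ to the paper's Robin operator $B_{g}\varphi = \frac{\partial\varphi}{\partial\nu} + \frac{2}{p-2}h_{g}\varphi$, using $\frac{2}{p-2}=\frac{n-2}{2}$, so that $\text{sgn}(\tilde h) = \text{sgn}(B_{g}\varphi)$ pointwise on $\partial M$. You then observe the decisive point: since the theorem asks nothing about the interior scalar curvature, $\varphi$ need not satisfy any equation in $M$, and one may freely prescribe the one-jet $\bigl(\varphi|_{\partial M}, \partial_{\nu}\varphi|_{\partial M}\bigr)$. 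The collar construction is sound: with $\nu$ outward and $\tau$ agreeing with $\text{dist}(\cdot,\partial M)$ near $\partial M$, one has $\partial_{\nu}\tau=-1$ on $\partial M$, so $\varphi = e^{\mp\beta\tau}$ gives $\varphi\equiv 1$ and $\partial_{\nu}\varphi = \pm\beta$ there, and taking $\beta>\frac{n-2}{2}\max_{\partial M}|h_{g}|$ (finite by compactness of $\partial M$) forces $B_{g}\varphi = \pm\beta + \frac{n-2}{2}h_{g}$ to have the desired sign on all of $\partial M$. The alternative you mention --- invoking a standard extension theorem to produce a positive $\varphi\in\calC^{\infty}(\bar M)$ with $\varphi\equiv 1$ and $\partial_{\nu}\varphi\equiv\pm\beta$ on $\partial M$ --- is equally valid and sidesteps any concern about the regularity of the distance function away from a small collar.
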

\medskip

Sobolev embeddings are critical for the regularity of solution of (\ref{intro:eqn1}).
\begin{proposition}\label{pre:prop5}\cite[Ch.~2]{Aubin}  (Sobolev Embeddings) 
Let $ (\bar{M}, g) $ be a compact manifold with smooth boundary $ \partial M $.

(i) For $ s \in \mathbb{N} $ and $ 1 \leqslant p \leqslant p' < \infty $ such that
\begin{equation}\label{pre:eqn10}
   \frac{1}{p} - \frac{s}{n} \leqslant \frac{1}{p'},
\end{equation}
\noindent  $ W^{s, p}(M, g) $ continuously embeds into $ \mathcal{L}^{p'}(M, g) $ with the following estimates: 
\begin{equation}\label{pre:eqn11}
\lVert u \rVert_{\calL^{p'}(M, g)} \leqslant K \lVert u \rVert_{W^{s, p}(M, g)}.
\end{equation}

(ii) For $ s \in \mathbb{N} $, $ 1 \leqslant p < \infty $ and $ 0 < \alpha < 1 $ such that
\begin{equation}\label{pre:eqn12}
  \frac{1}{p} - \frac{s}{n} \leqslant -\frac{\alpha}{n},
\end{equation}
Then  $ W^{s, p}(M, g) $ continuously embeds in the H\"older space $ \calC^{0, \alpha}(\bar{M}) $ with the following estimates:
\begin{equation}\label{pre:eqn13}
\lVert u \rVert_{\calC^{0, \alpha}(\bar{M})} \leqslant K' \lVert u \rVert_{W^{s, p}(M, g)}.
\end{equation}

(iii) Both embeddings above are compact embeddings provided that the strict inequalities hold in (\ref{pre:eqn10}) and (\ref{pre:eqn12}), respectively.
\end{proposition}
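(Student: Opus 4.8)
The plan is to reduce the global estimates on $(\bar M, g)$ to the classical Sobolev and Morrey inequalities on Euclidean domains by means of a finite atlas together with a subordinate partition of unity. First I would use the compactness of $\bar M$ to choose a finite cover by coordinate charts $\{(U_i, \psi_i)\}_{i=1}^{N}$, where each $\psi_i$ is a diffeomorphism of $U_i$ onto either a Euclidean ball $B \subset \R^n$ (interior charts) or a half-ball $B^{+} = B \cap \{x_n \geqslant 0\}$ (boundary charts), and a smooth partition of unity $\{\chi_i\}$ subordinate to it. Since each $\overline{U}_i$ is compact and $g$ is smooth up to $\partial M$, in these coordinates the metric coefficients, their inverses, and all Christoffel symbols are bounded with bounded derivatives; hence $\dvol$ is comparable to Lebesgue measure, and for every $j$ the covariant norm $\lvert \nabla^{j} v \rvert_{g}$ is pointwise comparable, with constants depending only on the finitely many charts, to $\bigl( \sum_{\lvert \alpha \rvert \leqslant j} \lvert \partial^{\alpha} v \rvert^{2} \bigr)^{1/2}$. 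Consequently, for a function supported in $U_i$, the intrinsic norm $\lVert \cdot \rVert_{W^{s,p}(M,g)}$ is equivalent to the flat $W^{s,p}$-norm of its coordinate representative on $B$ or $B^{+}$.

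Second, I would write $u = \sum_{i=1}^{N} \chi_i u$ and bound each term separately. For an interior chart, $(\chi_i u)\circ \psi_i^{-1}$ is a $W^{s,p}$ function with compact support in $B$, so the classical Sobolev embedding $W^{s,p}(B) \hookrightarrow \calL^{p'}(B)$ under (\ref{pre:eqn10}), respectively the Morrey embedding $W^{s,p}(B) \hookrightarrow \calC^{0,\alpha}(\bar B)$ under (\ref{pre:eqn12}), applies directly. For a boundary chart, the coordinate representative is supported in the half-ball $B^{+}$, which has smooth boundary; I would first apply a bounded linear extension operator $E : W^{s,p}(B^{+}) \to W^{s,p}(\R^n)$ (Stein extension, or iterated reflection across $\{x_n = 0\}$), then apply the Euclidean embeddings to $E\bigl((\chi_i u)\circ\psi_i^{-1}\bigr)$, and finally restrict back to $B^{+}$ and transport through $\psi_i$. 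Summing over the finitely many indices $i$ with the triangle inequality, and invoking the norm equivalence from the first step, yields (\ref{pre:eqn11}) and (\ref{pre:eqn13}) with constants $K, K'$ depending only on $(\bar M, g)$, $s$, $p$, $p'$, $\alpha$.

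Third, for the compactness claim in (iii) I would run the same localization: when the inequality in (\ref{pre:eqn10}) (respectively (\ref{pre:eqn12})) is strict, the corresponding Euclidean embedding on $B$ or $B^{+}$ is compact by the Rellich--Kondrachov theorem, again using the extension operator for the half-balls. Given a bounded sequence in $W^{s,p}(M,g)$, multiplying by the fixed cutoffs $\chi_i$ produces for each $i$ a bounded sequence of coordinate representatives; a diagonal argument over $i = 1, \dots, N$ extracts a subsequence along which every localized piece converges in $\calL^{p'}$ (respectively in $\calC^{0,\alpha}$), hence so does $u = \sum_i \chi_i u$, which gives compactness.

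I expect the main obstacle to be the careful treatment of the boundary charts: one must verify that the extension operator is bounded on the full range of the $W^{s,p}$ scale in use, and that the flat-versus-intrinsic comparison of higher covariant derivatives holds with uniform constants — this is precisely where the smoothness of $g$ up to $\partial M$ and the finiteness of the atlas enter. Away from $\partial M$, the argument is the standard one for closed manifolds.
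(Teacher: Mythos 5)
The paper itself offers no proof of this proposition; it cites it directly from Aubin (Ch.~2), where the argument is exactly the localization via a finite atlas, subordinate partition of unity, chartwise reduction to the Euclidean Sobolev and Morrey inequalities, and Stein (or reflection) extension on the boundary half-balls that you describe, so your approach matches the reference. Your proof is essentially correct and complete, including the finite subsequence extraction for part (iii). One intermediate claim is stated imprecisely, though: the single covariant norm $\lvert \nabla^{j} v \rvert_{g}$ is not pointwise comparable to $\bigl(\sum_{\lvert \alpha\rvert \leqslant j} \lvert \partial^{\alpha} v \rvert^{2}\bigr)^{1/2}$ — for $v$ linear in a flat chart one has $\nabla^{2} v \equiv 0$ while $\lvert \partial v \rvert \neq 0$ — and what actually holds (by a triangular induction on the order, using boundedness of the Christoffel symbols) is the two-sided pointwise comparison between $\sum_{k \leqslant j} \lvert \nabla^{k} v\rvert_{g}^{2}$ and $\sum_{\lvert \alpha\rvert \leqslant j} \lvert \partial^{\alpha} v\rvert^{2}$. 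That corrected statement is exactly what your subsequent ``Consequently'' uses, so the chartwise $W^{s,p}$-norm equivalence and the rest of the argument go through unchanged.
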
 
\medskip

\section{Monotone Iteration Scheme for Dirichlet Boundary Condition}
In this section, we show the existence of the solution of the Dirichlet problem (\ref{intro:eqn1}) by assuming the existence of sub-solution and super-solution of the boundary Yamabe equation with Dirichlet boundary condition, i.e. given a smooth, positive function $ \phi $ on $ \partial M $ and some constant $ \lambda \in \R $, we assume the existences of $ u_{-}, u_{+} \in \calC^{\infty}(M) $ such that
\begin{align*}
\Box_{g} u_{+} & \geqslant \lambda u_{+}^{p-1} \; {\rm on} \; M, B_{d} u_{+} \geqslant \phi \; {\rm on} \; \partial M; \\
\Box_{g} u_{-} & \leqslant \lambda u_{-}^{p-1} \; {\rm on} \; M, B_{d} u_{-} \leqslant \phi \; {\rm on} \; \partial M.
\end{align*}
In the recent results of closed manifolds and compact manifolds with boundary, sub- and super-solutions in the weak sense were enough. Here we need the sub- and super-solutions hold in the classical sense.
\begin{theorem}\label{iteration:thm1}
Let $ (\bar{M}, g) $ be a compact manifold with smooth boundary $ \partial M $. Let $ q > \dim \bar{M} $. Let $ \phi \in \calC^{\infty}(\partial M) $ be a positive function on $ \partial M $. Let $ \lambda \in \R $ be some constant. Suppose that there exist $ u_{-} \in \calC^{0}(\bar{M}) \cap \calC^{\infty}(M) $ and $ u_{+} \in \calC^{0}(\bar{M}) \cap \calC^{\infty}(M) $, $ 0 \leqslant u_{-} \leqslant u_{+} $, $ u_{-} \not\equiv 0 $ on $ \bar{M} $ such that
\begin{equation}\label{iteration:eqn1}
\begin{split}
-a\Delta_{g} u_{-} + S_{g} u_{-} - \lambda u_{-}^{p-1} & \leqslant 0 \; {\rm in} \; M,  u_{-} \leqslant \phi \; {\rm on} \; \partial M \\
-a\Delta_{g} u_{+} + S_{g} u_{+} - \lambda u_{+}^{p-1} & \geqslant 0 \; {\rm in} \; M, u_{+} \geqslant \phi \; {\rm on} \; \partial M
\end{split}
\end{equation}
holds in the classical sense. Then there exists a real, positive solution $ u \in \calC^{\infty}(M) \cap \calC^{1, \alpha}(\bar{M}) $ of
\begin{equation}\label{iteration:eqn2}
\Box_{g} u = -a\Delta_{g} u + S_{g} u = \lambda u^{p-1}  \; {\rm in} \; M, B_{d} u =  \phi \; {\rm on} \; \partial M.
\end{equation}
\end{theorem}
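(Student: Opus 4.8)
The plan is to run a monotone iteration starting from the super-solution $u_+$. First I would fix a constant $A > 0$ large enough that the map $t \mapsto At - S_g t + \lambda t^{p-1}$ is nondecreasing on the interval $[0, \sup_{\bar M} u_+]$; this is possible because $S_g$ is bounded on the compact manifold, $u_+$ is bounded, and $t^{p-1}$ has bounded derivative on a bounded interval. With such an $A$ fixed, I would define the iteration $u_0 = u_+$ and, inductively, let $u_{k+1} \in H^1(M,g)$ be the unique weak solution of the linear Dirichlet problem
\begin{equation*}
-a\Delta_g u_{k+1} + A u_{k+1} = A u_k - S_g u_k + \lambda u_k^{p-1} \;\; {\rm in}\; M, \qquad u_{k+1} = \phi \;\; {\rm on}\; \partial M.
\end{equation*}
Existence and uniqueness of each $u_{k+1}$ follows from the Fredholm dichotomy remark preceding Theorem~\ref{pre:thm1} together with the regularity and surjectivity of the trace operator in Proposition~\ref{pre:prop1}; a first pass of $W^{2,q}$-regularity via Theorem~\ref{pre:thm1}(ii) and the Sobolev embedding $W^{2,q} \hookrightarrow \calC^{1,\alpha}(\bar M)$ (Proposition~\ref{pre:prop5}, valid since $q > n$) puts each iterate in $\calC^{1,\alpha}(\bar M) \cap \calC^\infty(M)$.

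Next I would establish, by induction on $k$, the chain of inequalities $u_- \leqslant u_{k+1} \leqslant u_k \leqslant u_+$ on $\bar M$. The key device is the strong maximum principle, Proposition~\ref{pre:prop2}, applied to the operator $-a\Delta_g + A$. For the step $u_1 \leqslant u_0 = u_+$: the difference $w = u_+ - u_1$ satisfies $(-a\Delta_g + A)w = (-a\Delta_g + A)u_+ - (Au_+ - S_g u_+ + \lambda u_+^{p-1}) = -a\Delta_g u_+ + S_g u_+ - \lambda u_+^{p-1} \geqslant 0$ in $M$ by the super-solution hypothesis, and $w = u_+ - \phi \geqslant 0$ on $\partial M$, so $w \geqslant 0$. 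For the step $u_- \leqslant u_1$: the difference $v = u_1 - u_-$ satisfies $(-a\Delta_g + A)v \geqslant (Au_+ - S_g u_+ + \lambda u_+^{p-1}) - (Au_- - S_g u_- + \lambda u_-^{p-1}) \geqslant 0$, using first the sub-solution inequality for $u_-$ and then the monotonicity of $t \mapsto At - S_g t + \lambda t^{p-1}$ on $[0, \sup u_+]$ together with $u_- \leqslant u_+$; and $v = \phi - u_- \geqslant 0$ on $\partial M$. The inductive step from $(u_-\leqslant u_{k+1}\leqslant u_k)$ to $(u_-\leqslant u_{k+2}\leqslant u_{k+1})$ is identical, again invoking the monotonicity of the nonlinearity. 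This yields a pointwise monotone decreasing sequence bounded below by $u_- \not\equiv 0$.

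Finally I would pass to the limit. The pointwise limit $u(x) := \lim_k u_k(x)$ exists and satisfies $u_- \leqslant u \leqslant u_+$, so $u$ is bounded and the right-hand sides $f_k := Au_k - S_g u_k + \lambda u_k^{p-1}$ are uniformly bounded in $\calL^q(M,g)$ for any $q < \infty$; by Theorem~\ref{pre:thm1}(ii) the $u_k$ are uniformly bounded in $W^{2,q}(M,g)$, hence (Proposition~\ref{pre:prop5}(iii), compact embedding, $q>n$) precompact in $\calC^{1,\alpha'}(\bar M)$ for $\alpha' < \alpha$. Thus a subsequence converges in $\calC^1(\bar M)$ to $u$, which therefore lies in $\calC^{1,\alpha}(\bar M)$, satisfies $u = \phi$ on $\partial M$, and is a weak — hence, by $\calL^q$ elliptic regularity, strong $W^{2,q}$ — solution of \eqref{iteration:eqn2}. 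Bootstrapping: since $u \in \calC^{1,\alpha}(\bar M)$ the right-hand side $\lambda u^{p-1} - S_g u + A u$ lies in $\calC^{0,\alpha}$, so Schauder-type interior estimates (or iterated application of Theorem~\ref{pre:thm1}(i) with Sobolev embedding) give $u \in \calC^\infty(M)$. Positivity of $u$ on $M$: $u \geqslant 0$ everywhere, $u \not\equiv 0$ since $u \geqslant u_-$, and $(-a\Delta_g + A)u = f \geqslant 0$ once $A$ also dominates so that $Au + \lambda u^{p-1} - S_g u \geqslant 0$ (enlarge $A$ if necessary); the strong maximum principle then forces $u > 0$ in the interior $M$.

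I expect the main obstacle to be the requirement, emphasized in the remark before the theorem, that the sub- and super-solution inequalities hold in the \emph{classical} rather than the weak sense, and correspondingly that the iterates and the limit be controlled in $\calC^1(\bar M)$ up to the boundary rather than merely in $H^1$. This is why the argument must route through $W^{2,q}$-regularity with $q > n$ (Theorem~\ref{pre:thm1}(ii)) and the resulting $\calC^{1,\alpha}$-bounds and compactness, instead of the softer $H^1$-weak-convergence machinery used on closed manifolds; care is needed to verify that the boundary datum $\phi$ is attained in the limit and that the maximum-principle comparisons, which need $\calC^2(M)\cap\calC^0(\bar M)$ regularity of the test functions, apply at every stage.
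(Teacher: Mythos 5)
Your proposal is correct and follows essentially the same path as the paper's proof: monotone iteration started at $u_0=u_+$ with the linearized operator $-a\Delta_g+A$, where $A$ is chosen so that $A-S_g+\lambda(p-1)t^{p-2}>0$ on the relevant range (equivalently, your monotonicity of $t\mapsto At-S_g t+\lambda t^{p-1}$); strong maximum principle to trap each iterate between $u_-$ and the previous iterate; uniform $W^{2,q}$ bounds via $\calL^q$-regularity of the linear problem and compact Sobolev embedding into $\calC^{1,\alpha'}(\bar M)$ to pass to the limit; interior bootstrapping for $\calC^\infty(M)$. Your positivity remark via the strong maximum principle is a useful addition that the paper leaves implicit, but it is a filled-in detail rather than a different route.
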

\begin{proof}
Fix some $ q > \dim \bar{M} $. Fix any $ \lambda > 0 $. Denote $ u_{0} = u_{+} $. Choose a constant $ A > 0 $ such that
\begin{equation}\label{iteration:eqn3}
-S_{g}(x) + \lambda (p - 1) u(x)^{p-2} + A > 0, \forall u(x) \in [\min_{\bar{M}} u_{-}(x), \max_{\bar{M}} u_{+}(x) ], \forall x \in \bar{M}
\end{equation}
pointwise. For the first step, consider the linear PDE
\begin{equation}\label{iteration:eqn4}
-a\Delta_{g} u_{1} + Au_{1} = Au_{0} - S_{g} u_{0} + \lambda u_{0}^{p-1}  \; {\rm in} \; M, u_{1} = \phi \; {\rm on} \; \partial M.
\end{equation}
Since $ u_{0} \in \calC^{\infty}(M) $, it follows from Lax-Milgram \cite[Ch.~6]{Lax} that (\ref{iteration:eqn4}) admits a weak solution $ u_{1} \in H^{1}(M, g) $. $ u_{0} \in \calC^{\infty}(M) $ implies that $ u_{0} \in H^{s}(M, g) $ for all $ s \in \mathbb{Z}_{+} $, it follows from $ H^{s}$-type regularity in Theorem \ref{pre:thm1} (i) that $ u_{1} \in H^{s} $ for all $ s \in \mathbb{Z}_{+} $ and thus $ u_{1} \in \calC^{\infty}(M) $ due to Sobolev embedding. $ u_{0} \in \calC^{\infty}(M) $ implies $ u_{0} \in \calL^{q}(M, g) $. By $ W^{2, p} $-type regularity in Theorem \ref{pre:thm1} (ii) that $ u_{1} \in W^{2, q}(M, g) $. Due to Sobolev embedding in Proposition \ref{pre:prop5} (ii), we conclude that $ u_{1} \in W^{2, q}(M, g) \cap \calC^{1, \alpha}(\bar{M}) $ for some $ \alpha \in (0, 1) $.

We show that $ 0 \leqslant u_{-} \leqslant u_{1} \leqslant u_{+} $ pointwise. By (\ref{iteration:eqn1}) we have
\begin{equation*}
-a\Delta_{g} u_{0} + Au_{0} \geqslant Au_{0} - S_{g} u_{0} + \lambda u_{0}^{p-1}  \; {\rm in} \; M, u_{0} \geqslant \phi \; {\rm on} \; \partial M.
\end{equation*}
Subtracting this by (\ref{iteration:eqn4}), it follows that
\begin{equation}\label{iteration:eqn5}
-a\Delta_{g} (u_{0} - u_{1}) + A(u_{0} - u_{1}) \geqslant 0 \; {\rm on} \; M, u_{0} - u_{1} \geqslant 0 \; {\rm on} \; \partial M.
\end{equation}
Since $ u_{0} - u_{1} \in \calC^{2}(M) \cap \calC^{0}(\bar{M}) $, it follows from the strong maximum principle in Proposition \ref{pre:prop2} that $ u_{0} - u_{1} \geqslant 0 $. By the same argument with sub-solution $ u_{-} $, we conclude that $ 0 \leqslant u_{-} \leqslant u_{1} \leqslant u_{+} = u_{0} $ on $ \bar{M} $. Inductively, we consider the PDE
\begin{equation}\label{iteration:eqn6}
-a\Delta_{g} u_{k} + Au_{k} = Au_{k- 1} - S_{g} u_{k - 1} + \lambda u_{k - 1}^{p-1}  \; {\rm in} \; M, u_{k - 1} = \phi \; {\rm on} \; \partial M.
\end{equation}
such that $ u_{k - 1} \in \calC^{\infty}(M) \cap W^{2, p}(M, g) \cap \calC^{1, \alpha}(\bar{M}) $ solves the previous step. In addition, we assume
\begin{equation}\label{iteration:eqn7}
0 \leqslant u_{-} \leqslant u_{k - 1} \leqslant \dotso \leqslant u_{1} \leqslant u_{0}.
\end{equation}
By Lax-Milgram, Sobolev embedding, $ H^{s} $-type and $ W^{s, p} $-type regularities, we conclude that $ u_{k} \in \calC^{\infty}(M) \cap W^{2, q}(M, g) \cap \calC^{1, \alpha}(\bar{M}) $ solves (\ref{iteration:eqn6}). Taking subtraction between two adjacent iteration steps, we have
\begin{equation}\label{iteration:eqn8}
\begin{split}
-a\Delta_{g} (u_{k - 1} - u_{k}) + A(u_{k - 1} - u_{k}) & = A(u_{k - 2} - u_{k - 1}) - S_{g} (u_{k- 2} - u_{k - 1}) + \lambda \left(u_{k - 2}^{p-1} - u_{k - 1}^{p-1} \right) \; {\rm on} \; M; \\
u_{k - 1} - u_{k} & = 0 \; {\rm on} \; \partial M.
\end{split}
\end{equation}
Due to the choice of $ A $ in (\ref{iteration:eqn3}), we conclude that the right side of (\ref{iteration:eqn8}) satisfies
\begin{equation*}
A(u_{k - 2} - u_{k - 1}) - S_{g} (u_{k- 2} - u_{k - 1}) + \lambda \left(u_{k - 2}^{p-1} - u_{k - 1}^{p-1} \right)  \geqslant 0
\end{equation*}
due to $ u_{k - 2} \geqslant u_{k - 1} $ and mean value theorem. By maximum principle, we conclude that $ u_{k - 1} - u_{k} \geqslant 0 $ on $ M $. Since $ u_{k - 1} \geqslant u_{0} $, it follows that $ u_{k} \geqslant u_{-} $ due to the same argument above. Hence
\begin{equation*}
0 \leqslant u_{-} \leqslant \dotso \leqslant u_{k} \leqslant u_{k - 1} \leqslant \dotso \leqslant u_{1} \leqslant u_{0}, \forall k \in \mathbb{Z}_{+}.
\end{equation*}
We now show the convergence of the sequence $ \lbrace u_{k} \rbrace $. Since $ \phi \in \calC^{\infty}(\partial M) $, there exists a smooth function $ \Phi \in \calC^{\infty}(\bar{M}) $ which admits $ \phi $ on $ \partial M $. By $ W^{s, p} $-type regularity, we have
\begin{equation}\label{iteration:eqn8a}
\lVert u_{k} \rVert_{W^{2, q}(M, g)} \leqslant C \left( \left\lVert Au_{k - 1} - S_{g} u_{k - 1} + \lambda u_{k - 1}^{p-1} \right\rVert_{\calL^{p}(M, g)} + \lVert \Phi \rVert_{W^{2, p}(M, g)} \right), \forall k \in \mathbb{Z}_{+}.
\end{equation}
According to (\ref{iteration:eqn7}), we have
\begin{align*}
& \left\lVert Au_{k - 1} - S_{g} u_{k - 1} + \lambda u_{k - 1}^{p-1} \right\rVert_{\calL^{q}(M, g)} \\
& \qquad \leqslant A \sup_{\bar{M}} u_{+} \text{Vol}_{g}(M)^{\frac{1}{p}} + \sup_{\bar{M}} \lvert S_{g} \rvert \sup_{\bar{M}} u_{+} \text{Vol}_{g}(M)^{\frac{1}{p}} + \lvert \lambda \rvert \sup_{\bar{M}} u_{+}^{p-1} \text{Vol}_{g}(M)^{\frac{1}{p}}.
\end{align*}
It follows from (\ref{iteration:eqn8a}) that
\begin{equation*}
\lVert u_{k} \rVert_{W^{2, q}(M, g)} \leqslant C_{3}, \forall k \in \mathbb{Z}_{+}.
\end{equation*}
Due to Sobolev embedding, we have
\begin{equation}\label{iteration:eqn9}
\lVert u_{k} \rVert_{\calC^{1, \alpha}(\bar{M})} \leqslant C_{3}, \forall k \in \mathbb{Z}_{+}
\end{equation}
for some uniform upper bound $ C_{3} $. Due to Arzela-Ascoli, it follows that there exists a subsequence of $ \lbrace u_{k} \rbrace $ which converges uniformly to some limit $ u $. By monotonicity of $ \lbrace u_{k} \rbrace_{k \in \mathbb{Z}_{\geqslant 0}} $, the whole sequence $ \lbrace u_{k} \rbrace $ converges uniformly to $ u $. It follows that $ \lim_{k \rightarrow \infty} u_{k} = u $ in $ W^{2, q} $-sense. By Sobolev embedding, $ u \in \calC^{1, \alpha}(\bar{M}) $ and hence local Schauder estimates implies that $ u \in \calC^{2, \alpha}(M) $. Taking limit on both sides of (\ref{iteration:eqn6}), we conclude that
\begin{equation*}
-a\Delta_{g} u + S_{g} u = \lambda u^{p-1}  \; {\rm in} \; M, u = \phi \; {\rm on} \; \partial M.
\end{equation*}
Finally the standard bootstrapping method implies that $u \in \calC^{\infty}(M) \cap \calC^{1, \alpha}(\bar{M}) $. In addition, we have
\begin{equation*}
0 \leqslant u_{-} \leqslant u \leqslant u_{+} \; {\rm on} \; M.
\end{equation*}
\end{proof}
\medskip

The conclusion above still holds when we replace the constant $ \lambda $ by some smooth function $ S \in \calC^{\infty}(\bar{M}) $, as the following result states.
\begin{corollary}\label{iteration:cor1}
Let $ (\bar{M}, g) $ be a compact manifold with smooth boundary $ \partial M $. Let $ q > \dim \bar{M} $. Let $ \phi \in \calC^{\infty}(\partial M) $ be a positive function on $ \partial M $. Let $ S \in \calC^{\infty}(\bar{M}) $ be a smooth function on $ \bar{M} $. Suppose that there exist $ u_{-} \in \calC^{0}(\bar{M}) \cap \calC^{\infty}(M) $ and $ u_{+} \in \calC^{0}(\bar{M}) \cap \calC^{\infty}(M) $, $ 0 \leqslant u_{-} \leqslant u_{+} $, $ u_{-} \not\equiv 0 $ on $ \bar{M} $ such that
\begin{equation}\label{iteration:eqn10}
\begin{split}
-a\Delta_{g} u_{-} + S_{g} u_{-} -S u_{-}^{p-1} & \leqslant 0 \; {\rm in} \; M,  u_{-} \leqslant \phi \; {\rm on} \; \partial M \\
-a\Delta_{g} u_{+} + S_{g} u_{+} - S u_{+}^{p-1} & \geqslant 0 \; {\rm in} \; M, u_{+} \geqslant \phi \; {\rm on} \; \partial M
\end{split}
\end{equation}
holds in the classical sense. Then there exists a real, positive solution $ u \in \calC^{\infty}(M) \cap \calC^{1, \alpha}(\bar{M}) $ of
\begin{equation}\label{iteration:eqn11}
\Box_{g} u = -a\Delta_{g} u + S_{g} u = S u^{p-1}  \; {\rm in} \; M, B_{d} u =  \phi \; {\rm on} \; \partial M.
\end{equation}
\end{corollary}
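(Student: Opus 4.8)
The plan is to mimic the proof of Theorem~\ref{iteration:thm1} essentially verbatim, replacing the constant $\lambda$ by the smooth function $S$ throughout, and checking that nothing in the argument relied on $\lambda$ being constant rather than merely bounded. The only places where the constant nature of $\lambda$ entered were (a) the choice of the large constant $A>0$ making the zeroth-order coefficient positive on the relevant range, and (b) the uniform $\calL^q$-bound on the right-hand side of the iterated linear PDE. Both survive the generalization since $S\in\calC^\infty(\bar M)$ is automatically bounded on the compact manifold $\bar M$, so $\sup_{\bar M}|S|<\infty$ plays the role that $|\lambda|$ played before.

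**First I would** fix $q>\dim\bar M$ and choose a constant $A>0$ such that
\begin{equation*}
-S_g(x) + S(x)(p-1)u(x)^{p-2} + A > 0, \quad \forall\, u(x)\in\left[\min_{\bar M}u_-,\ \max_{\bar M}u_+\right],\ \forall\, x\in\bar M,
\end{equation*}
which is possible because $S_g$, $S$, $u_-$, $u_+$ are all bounded on $\bar M$ and $p-2=\frac{4}{n-2}>0$. Setting $u_0=u_+$, I would then define the iteration by
\begin{equation*}
-a\Delta_g u_k + A u_k = A u_{k-1} - S_g u_{k-1} + S\, u_{k-1}^{p-1}\ \text{in}\ M,\quad u_k=\phi\ \text{on}\ \partial M,
\end{equation*}
and, exactly as in Theorem~\ref{iteration:thm1}, invoke Lax--Milgram for solvability, then $H^s$-type regularity (Theorem~\ref{pre:thm1}(i)) to get $u_k\in\calC^\infty(M)$, then $W^{2,q}$-type regularity (Theorem~\ref{pre:thm1}(ii)) together with Sobolev embedding (Proposition~\ref{pre:prop5}(ii)) to get $u_k\in W^{2,q}(M,g)\cap\calC^{1,\alpha}(\bar M)$.

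**Next I would** establish the monotone sandwich $0\leqslant u_-\leqslant\dots\leqslant u_k\leqslant u_{k-1}\leqslant\dots\leqslant u_1\leqslant u_0$ by induction: subtracting the defining equation from the differential inequality satisfied by $u_{k-1}$ (rewritten with $+Au$ on both sides) and applying the strong maximum principle (Proposition~\ref{pre:prop2}) to $u_{k-1}-u_k$, using the sign condition from the choice of $A$ and the mean value theorem applied to $t\mapsto t^{p-1}$ on the interval bracketed by $u_k$ and $u_{k-1}$; the comparison with $u_-$ from below is identical. The key point that $S(x)\geqslant 0$ is \emph{not} needed here—only that the coefficient $-S_g + S(p-1)u^{p-2}+A$ stays positive, which $A$ was chosen to guarantee regardless of the sign of $S$.

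**The last step** is convergence: from $W^{2,q}$-regularity and the uniform bound $\bigl\lVert A u_{k-1} - S_g u_{k-1} + S\, u_{k-1}^{p-1}\bigr\rVert_{\calL^q(M,g)} \leqslant \bigl(A + \sup_{\bar M}|S_g| + \sup_{\bar M}|S|\,\sup_{\bar M}u_+^{p-2}\bigr)\sup_{\bar M}u_+\,\mathrm{Vol}_g(M)^{1/q}$ one gets $\lVert u_k\rVert_{W^{2,q}(M,g)}\leqslant C_3$, hence $\lVert u_k\rVert_{\calC^{1,\alpha}(\bar M)}\leqslant C_3$ by Sobolev embedding; Arzel\`a--Ascoli plus monotonicity forces the whole sequence to converge uniformly, and then in $W^{2,q}$, to a limit $u$ solving \eqref{iteration:eqn11} with $0\leqslant u_-\leqslant u\leqslant u_+$; local Schauder estimates and bootstrapping give $u\in\calC^\infty(M)\cap\calC^{1,\alpha}(\bar M)$. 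Since $u_-\not\equiv 0$ and $\phi>0$, the maximum principle yields $u>0$ on $M$. I do not expect a genuine obstacle here: the only point requiring a moment's care is verifying that $S u_{k-1}^{p-1}\in\calL^q(M,g)$ uniformly, which is immediate from the uniform bound $u_{k-1}\leqslant u_+$ and boundedness of $S$; the argument is otherwise a direct transcription of the preceding theorem's proof.
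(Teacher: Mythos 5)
Your proposal is correct and takes essentially the same approach as the paper: the paper's proof of this corollary consists of one sentence instructing to replace $\lambda$ by $S$ in the proof of Theorem~\ref{iteration:thm1} and to choose $A$ via precisely the condition~\eqref{iteration:eqn12} you wrote down. You have simply unwound that instruction explicitly, correctly identifying that boundedness of $S$ on compact $\bar{M}$ is all that is needed for the choice of $A$ and for the uniform $\calL^{q}$-bound on the iterates' right-hand sides.
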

\begin{proof} Replacing $ \lambda $ by $ S $ in the argument of Theorem \ref{iteration:thm1}, and choose the constant $ A > 0 $ such that
\begin{equation}\label{iteration:eqn12}
-S_{g}(x) + S(x) (p - 1) u(x)^{p-2} + A > 0, \forall u(x) \in [\min_{\bar{M}} u_{-}(x), \max_{\bar{M}} u_{+}(x) ], \forall x \in \bar{M}
\end{equation}
pointwise. The rest are exactly the same as above.
\end{proof}
\medskip 

\section{The Boundary Yamabe Problem with Dirichlet Boundary Condition}
In this section, we show the existence of a real, positive, smooth solution of the following boundary Yamabe equation 
\begin{equation}\label{dirichlet:eqn1}
\begin{split}
-a\Delta_{g} u + S_{g} u & = \lambda u^{p-1} \; {\rm on} \; M; \\
u & = \phi \; {\rm on} \; \partial M
\end{split}
\end{equation}
with Dirichlet boundary condition on compact manifold $ (\bar{M}, g) $ with smooth boundary $ \partial M $, on which a positive boundary value $ \phi \in \calC^{\infty}(\partial M) $ is given. Here $ \lambda \in \R $, which will be determined later, is the constant scalar curvature with respect to the metric $ \tilde{g} = u^{p-2} g $. Analogous to the results in \cite{XU4, XU5, XU3}, we analyze the solvability of (\ref{dirichlet:eqn1}) in several cases, classified by sign of $ \tilde{\eta}_{1} $ or $ \eta_{1} $, the first eigenvalue of conformal Laplacian with respect to Dirichlet or Robin conditions, respectively.

When $ \tilde{\eta}_{1} < 0 $, the original scalar curvature $ S_{g} $ should be negative somewhere with respect to $ g $. To see this, assume that $ S_{g} \geqslant 0 $ everywhere on $ \bar{M} $, the eigenvalue problem with Dirichlet condition says
\begin{equation*}
-\Delta_{g} \tilde{\varphi}  + S_{g} \tilde{\varphi}  = \tilde{\eta}_{1} \tilde{\varphi}  \Rightarrow -\Delta_{g} \tilde{\varphi}  = \left(-S_{g} + \tilde{\eta}_{1} \right) \tilde{\varphi}  < 0
\end{equation*}
in $ M $. Here $ \tilde{\varphi} > 0 $ in $ M $ is the corresponding eigenfunction. By maximum principle, it follows that $ \tilde{\varphi} $ achieves its maximum on $ \partial M $, on which $ \tilde{\varphi} \equiv 0 $. But $ \tilde{\varphi} > 0 $ in $ M $, contradiction. Unlike the Yamabe problem on closed manifolds and the boundary Yamabe problem on compact manifolds with boundary, there is almost no restriction for the scalar curvature $ S_{g} $ when $ \tilde{\eta}_{1} > 0 $.

Due to next result from Escobar \cite{ESC}, the sign of $ \eta_{1} $ is a Yamabe invariant. Hence $ S_{g} $ must be negative somewhere when $ \eta_{1} < 0 $, and $ S_{g} $ must be positive somewhere when $ \eta_{1} > 0 $.
\begin{proposition}\label{dirichlet:prop1}\cite[Prop.~1.3]{ESC}
Let $ (\bar{M}, g) $ be a compact manifold with smooth boundary. Assume $ g' = u^{p -2} g $ is a conformal metric to $ g $. If $ \eta_{1} $ and $ \eta_{1}' $ are the first eigenvalues of conformal Laplacian $ \Box_{g} $ and $ \Box_{g'} $ with respect to Robin condition in  (\ref{pre:eqn7}), respectively, then $ \text{sgn}(\eta_{1}) = \text{sgn}(\eta_{1}') $, or $ \eta_{1} = \eta_{1}' = 0 $.
\end{proposition}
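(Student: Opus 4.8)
The plan is to derive the statement from the conformal covariance of the conformal Laplacian $\Box_g$ together with its associated Robin operator $B_g$, reducing everything to a comparison of Rayleigh quotients, since $\mathrm{sgn}(\eta_1)$ is detected by the sign of a quadratic form that transforms well under conformal change. First I would record the variational characterization of the first Robin eigenvalue,
\[
\eta_1 = \inf_{0 \neq w \in H^1(M,g)} \frac{Q_g(w)}{\lVert w \rVert_{\calL^2(M,g)}^2}, \qquad Q_g(w) := \int_M \left( a \lvert \nabla_g w \rvert^2 + S_g w^2 \right) d\mathrm{Vol}_g + 2(n-1) \int_{\partial M} h_g w^2 \, dS_g,
\]
which is the standard description of the first eigenvalue of the self-adjoint problem in Proposition \ref{pre:prop3} (the infimum being attained at the positive eigenfunction $\varphi$ produced there); the boundary coefficient $2(n-1) = \tfrac{n-2}{2} a$ is exactly the one for which the natural boundary condition of $Q_g$ is $B_g w = 0$, since $\tfrac{n-2}{2} = \tfrac{2}{p-2}$. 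The identical formula with $g$ replaced by $g' = u^{p-2} g$ defines $\eta_1'$, with $\lVert w \rVert_{\calL^2(M,g')}^2 = \int_M u^{p} w^2 \, d\mathrm{Vol}_g$.

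The heart of the argument is the conformal invariance $Q_{g'}(w) = Q_g(uw)$ for every $w \in H^1(M,g)$. To prove it I would use the pointwise transformation laws $\Box_{g'} \psi = u^{1-p} \Box_g(u\psi)$ in $M$ and $B_{g'} \psi = u^{-p/2} B_g(u\psi)$ on $\partial M$ (the latter encoding both $\partial_{\nu'} = u^{-2/(n-2)} \partial_\nu$ and the conformal change of the mean curvature $h_g$), together with $d\mathrm{Vol}_{g'} = u^{p}\, d\mathrm{Vol}_g$ and $dS_{g'} = u^{\frac{2(n-1)}{n-2}}\, dS_g$. Integrating by parts gives $Q_g(v) = \int_M v\, \Box_g v \, d\mathrm{Vol}_g + a \int_{\partial M} v\, B_g v \, dS_g$, and likewise for $g'$; substituting the transformation laws, the exponents collapse — $u^{1-p} \cdot u^{p} = u$ in the interior integral, and $u^{-p/2} \cdot u^{\frac{2(n-1)}{n-2}} = u$ on the boundary integral, using $\tfrac{p}{2} = \tfrac{n}{n-2}$ — so that $Q_{g'}(w) = \int_M (uw) \Box_g(uw)\, d\mathrm{Vol}_g + a\int_{\partial M} (uw) B_g(uw)\, dS_g = Q_g(uw)$ upon reversing the integration by parts.

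Finally I would compare signs. Setting $v = uw$, which is a bijection of $H^1(M,g) \setminus \{0\}$ since $u \in \calC^\infty(\bar M)$ is positive, the two steps above give $\eta_1' = \inf_{0 \neq v} Q_g(v) \big/ \int_M u^{p-2} v^2 \, d\mathrm{Vol}_g$. Because $\bar M$ is compact and $u$ is continuous and positive, there are constants $0 < c_1 \leq c_2 < \infty$ with $c_1 \leq u^{p-2} \leq c_2$ on $\bar M$, so the weighted denominator is equivalent to $\lVert v \rVert_{\calL^2(M,g)}^2$. Hence: if $\eta_1 > 0$ then $Q_g(v) \geq \eta_1 \lVert v \rVert_{\calL^2(M,g)}^2 \geq (\eta_1 / c_2) \int_M u^{p-2} v^2 \, d\mathrm{Vol}_g$ for all $v$, forcing $\eta_1' \geq \eta_1/c_2 > 0$; if $\eta_1 = 0$ then $Q_g \geq 0$ gives $\eta_1' \geq 0$, while the test function $v = \varphi$ (for which $Q_g(\varphi) = 0$) gives $\eta_1' \leq 0$, hence $\eta_1' = 0$; if $\eta_1 < 0$ then $Q_g(v_0) < 0$ for some $v_0 \neq 0$, so $\eta_1' < 0$. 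These three cases are exhaustive with mutually exclusive conclusions, so $\mathrm{sgn}(\eta_1) = \mathrm{sgn}(\eta_1')$, with both vanishing simultaneously. The only genuinely delicate point is the exponent bookkeeping in the second step — particularly the conformal transformation law for the boundary operator $B_g$ (equivalently, for $h_g$ and $\nu$) and for the induced boundary volume form $dS_g$; the variational characterization used in the first step is routine and is essentially contained in the proofs of Propositions \ref{pre:prop3} and \ref{pre:prop4} (compactness of the trace $H^1(M,g) \to \calL^2(\partial M)$ together with Rellich).
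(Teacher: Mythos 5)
Your proof is correct and takes essentially the same route as Escobar's argument (which the paper cites without reproducing; the paper's own parallel proof for the Dirichlet eigenvalue, Proposition \ref{dirichlet:prop2}, is structurally identical). The key step in both is the conformal invariance of the Robin quadratic form, $Q_{g'}(w)=Q_g(uw)$; once this is in hand, the paper's version of the sign comparison tests each Rayleigh quotient against the other problem's positive eigenfunction to get two cross-inequalities, while you instead exploit that $u^{p-2}$ is bounded above and below on the compact $\bar M$ so that the $\calL^2(M,g)$ and $\calL^2(M,g')$ norms are comparable --- a trivially equivalent packaging. One small remark: your boundary coefficient $2(n-1)=\tfrac{n-2}{2}\,a$ in $Q_g$ is the correct one that makes $B_gw=0$ the natural boundary condition upon integration by parts; the display in the paper's proof of Proposition \ref{dirichlet:prop3} writes $\tfrac{2}{p-2}$ there, apparently dropping the overall factor $a$, though this does not affect any sign argument.
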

By similar argument, we can show that the sign of $ \tilde{\eta}_{1} $ is also invariant under conformal change.
\begin{proposition}\label{dirichlet:prop2}\cite[Prop.~1.3]{ESC}
Let $ (\bar{M}, g) $ be a compact manifold with smooth boundary. Assume $ g' = u^{p -2} g $ is a conformal metric to $ g $. If $ \tilde{\eta}_{1} $ and $ \tilde{\eta}_{1}' $ are the first eigenvalues of conformal Laplacian $ \Box_{g} $ and $ \Box_{g'} $ with respect to Dirichlet condition in  (\ref{pre:eqn8}), respectively, then $ \text{sgn}(\tilde{\eta}_{1}) = \text{sgn}(\tilde{\eta}_{1}') $, or $ \tilde{\eta}_{1} = \tilde{\eta}_{1}' = 0 $.
\end{proposition}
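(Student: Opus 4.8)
The plan is to run Escobar's argument for the Robin eigenvalue (Proposition~\ref{dirichlet:prop1}) with the $\calL^{2}$-normalized Rayleigh quotient in place of the Yamabe-type quotient, together with the conformal covariance of the conformal Laplacian $\Box_{g}$ from (\ref{pre:eqn6}). Write $\mathcal{E}_{g}(v) := \int_{M} a \lvert \nabla_{g} v \rvert^{2} \, d\text{Vol}_{g} + \int_{M} S_{g} v^{2} \, d\text{Vol}_{g}$ for $v \in H_{0}^{1}(M, g)$, so that, exactly as in the proof of Proposition~\ref{pre:prop4}, $\tilde{\eta}_{1} = \inf \{ \mathcal{E}_{g}(v) : v \in H_{0}^{1}(M, g), \ \lVert v \rVert_{\calL^{2}(M, g)} = 1 \}$, and this infimum is attained because $H_{0}^{1}(M, g) \hookrightarrow \calL^{2}(M, g)$ is a compact embedding. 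The first step is to observe that the sign of $\tilde{\eta}_{1}$ is encoded entirely in the quadratic form $\mathcal{E}_{g}$ on $H_{0}^{1}(M, g)$: using attainment of the infimum one gets $\tilde{\eta}_{1} > 0$ iff $\mathcal{E}_{g}$ is positive definite on $H_{0}^{1}(M, g) \setminus \{ 0 \}$; $\tilde{\eta}_{1} = 0$ iff $\mathcal{E}_{g} \geqslant 0$ on $H_{0}^{1}(M, g)$ with $\mathcal{E}_{g}(v_{0}) = 0$ for some $v_{0} \neq 0$; and $\tilde{\eta}_{1} < 0$ iff $\mathcal{E}_{g}(v) < 0$ for some $v$. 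None of these three alternatives refers to the $\calL^{2}$-normalization, which is the whole point.

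The second step is the conformal change of variables. For $g' = u^{p-2} g$ with $u \in \calC^{\infty}(\bar{M})$, $u > 0$, the conformal covariance of $\Box_{g}$ gives the pointwise identity $\Box_{g'} \psi = u^{-(p-1)} \Box_{g}(u \psi)$, together with $d\text{Vol}_{g'} = u^{p} \, d\text{Vol}_{g}$. For $v \in \calC_{c}^{\infty}(M)$, integrating $(\Box_{g'} v) v$ against $d\text{Vol}_{g'}$ and using that $v$ has compact support in $M$ (so no boundary term appears) gives
\begin{equation*}
\mathcal{E}_{g'}(v) = \int_{M} (\Box_{g'} v) v \, d\text{Vol}_{g'} = \int_{M} \Box_{g}(uv) \, (uv) \, d\text{Vol}_{g} = \mathcal{E}_{g}(uv),
\end{equation*}
and this extends to all $v \in H_{0}^{1}(M, g')$ by density, both sides being continuous in the $H^{1}$-norm. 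Since $u$ is smooth and bounded away from $0$ and $\infty$ on the compact manifold $\bar{M}$, the spaces $H_{0}^{1}(M, g)$ and $H_{0}^{1}(M, g')$ coincide with equivalent norms, and multiplication by $u$ is a linear automorphism of this space taking nonzero functions to nonzero functions. Hence positive definiteness, nonnegativity with a nontrivial null vector, and indefiniteness of $\mathcal{E}_{g'}$ hold precisely when the same property holds for $\mathcal{E}_{g}$; combined with the first step this gives $\text{sgn}(\tilde{\eta}_{1}) = \text{sgn}(\tilde{\eta}_{1}')$, with $\tilde{\eta}_{1} = \tilde{\eta}_{1}' = 0$ in the borderline case.

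I expect the only genuinely delicate point to be the implication "$\mathcal{E}_{g}$ positive definite $\Rightarrow \tilde{\eta}_{1} > 0$", i.e.\ upgrading positive definiteness to coercivity of $\mathcal{E}_{g}$ on the $\calL^{2}$-unit sphere; this is exactly where compactness of $H_{0}^{1}(M, g) \hookrightarrow \calL^{2}(M, g)$, hence attainment of the infimum as in Proposition~\ref{pre:prop4}, is genuinely used. Everything else is a routine conformal change of variables, and in particular no critical-exponent or concentration-compactness difficulty arises, because all the quotients here are normalized in the subcritical space $\calL^{2}$ rather than in $\calL^{p}$.
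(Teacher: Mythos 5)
Your proof is correct and rests on the same two pillars as the paper's: the conformal covariance identity $\mathcal{E}_{g'}(v) = \mathcal{E}_{g}(uv)$ for $v$ vanishing on $\partial M$ (so no boundary term arises in the integration by parts), and the Rayleigh-quotient characterization of $\tilde{\eta}_{1}$ from Proposition~\ref{pre:prop4}. The paper simply inserts the two first eigenfunctions $\tilde{\varphi}$ and $\tilde{\varphi}'$ into this identity and chains the two resulting Rayleigh-quotient inequalities to read off the sign comparison; your ``definiteness-class'' packaging, using that $v \mapsto uv$ is an automorphism of $H_{0}^{1}(M)$ intertwining the two quadratic forms, is a tidier but equivalent way of organizing exactly the same comparison.
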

\begin{proof} Let $ \tilde{\varphi} $ and $ \tilde{\varphi}' $ be the first eigenfunctions with respect to $ \tilde{\eta}_{1} $ and $ \tilde{\eta}_{1}' $, respectively. Due to the same argument in Proposition 1.3 of \cite{ESC}, we have
\begin{equation*}
\int_{M} a\lvert \nabla_{g} f \rvert^{2} \dvol + \int_{M} S_{g} f^{2} \dvol = \int_{M} a\lvert \nabla_{g} \left(u^{-1} f \right) \rvert^{2} d\text{vol}_{g'} + \int_{M} S_{g'} \left(u^{-1} f \right)^{2} d\text{vol}_{g'}
\end{equation*}
under conformal change $ g' = u^{p-2} g $ with any function $ f \in \calC_{c}^{\infty}(\bar{M}) $. Note that this holds since $ f = 0 $ on $ \partial M $ so there is no boundary term under integration by parts. By characterization of first eigenvalue of eigenvalue problem (\ref{pre:eqn8}) in Proposition \ref{pre:prop4}, we have
\begin{align*}
\tilde{\eta}_{1} \int_{M} \tilde{\varphi}^{2} \dvol & = \int_{M} a\lvert \nabla_{g} \tilde{\varphi} \rvert^{2} \dvol + \int_{M} S_{g} \tilde{\varphi}^{2} \dvol \\
& = \int_{M} a\lvert \nabla_{g} \left(u^{-1}  \tilde{\varphi} \right) \rvert^{2} d\text{vol}_{g'} + \int_{M} S_{g'} \left(u^{-1}  \tilde{\varphi} \right)^{2} d\text{vol}_{g'} \\
& \geqslant \tilde{\eta}_{1}' \int_{M} \left( u^{-1} \tilde{\varphi} \right)^{2} d\text{vol}_{g'}.
\end{align*}
By the same argument, we have
\begin{equation*}
\tilde{\eta}_{1}' \int_{M} \left( \tilde{\varphi}' \right)^{2} d\text{vol}_{g'} \geqslant \tilde{\eta}_{1} \int_{M} \left( u^{-1} \tilde{\varphi}' \right)^{2} \dvol.
\end{equation*}
The conclusion thus follows immediately.
\end{proof}
There exists some connection between signs of $ \eta_{1} $ and $ \tilde{\eta}_{1} $, as next proposition shows.
\begin{proposition}\label{dirichlet:prop3}
Let $ (\bar{M}, g) $ be a compact Riemannian manifold with smooth boundary $ \partial M $. Let $ \eta_{1}, \tilde{\eta}_{1} $ be the first eigenvalues of conformal Laplacian $ \Box_{g} $ with Robin and Dirichlet conditions, respectively. Then the following inequalities hold:
\begin{equation}\label{dirichlet:eqn2}
\tilde{\eta}_{1} \leqslant 0 \Rightarrow \eta_{1} \leqslant \tilde{\eta}_{1} \leqslant 0; \eta_{1} \geqslant 0 \Rightarrow \tilde{\eta}_{1} \geqslant \eta_{1} \geqslant 0.
\end{equation}
Furthermore, $ \eta_{1} $ and $ \tilde{\eta}_{1} $ cannot be both zero simultaneously, i.e.
\begin{equation}\label{dirichlet:eqn3}
\eta_{1} = 0 \Rightarrow \tilde{\eta}_{1} > 0; \tilde{\eta}_{1} = 0 \Rightarrow \eta_{1} < 0.
\end{equation}
\end{proposition}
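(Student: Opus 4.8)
The plan is to obtain both displays from the single comparison $\eta_{1} \leqslant \tilde{\eta}_{1}$, which always holds, together with the fact that $\eta_{1}$ and $\tilde{\eta}_{1}$ cannot vanish simultaneously; the latter I would prove by a Green's identity computation. For the comparison, recall the standard variational characterization of the first Robin eigenvalue,
\[
\eta_{1} = \inf_{0 \neq u \in H^{1}(M,g)} \frac{\displaystyle \int_{M} a\lvert \nabla_{g} u \rvert^{2}\,\dvol + \int_{M} S_{g} u^{2}\,\dvol + \frac{2a}{p-2}\int_{\partial M} h_{g} u^{2}\, dS}{\displaystyle \int_{M} u^{2}\,\dvol},
\]
the boundary term being produced when $-a\Delta_{g}$ is integrated by parts against the Robin condition $B_{g}u = 0$. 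Every $u \in H_{0}^{1}(M,g)$ has vanishing trace, so this boundary integral vanishes on $H_{0}^{1}(M,g)$, and there the quotient above coincides with the one characterizing $\tilde{\eta}_{1}$ in Proposition \ref{pre:prop4}. Since $H_{0}^{1}(M,g) \subset H^{1}(M,g)$, the infimum over the larger space is no larger: $\eta_{1} \leqslant \tilde{\eta}_{1}$. Now (\ref{dirichlet:eqn2}) is immediate --- $\tilde{\eta}_{1} \leqslant 0$ forces $\eta_{1} \leqslant \tilde{\eta}_{1} \leqslant 0$, and $\eta_{1} \geqslant 0$ forces $\tilde{\eta}_{1} \geqslant \eta_{1} \geqslant 0$.

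For (\ref{dirichlet:eqn3}), observe that it follows from $\eta_{1} \leqslant \tilde{\eta}_{1}$ once we know the two cannot both be $0$: if $\eta_{1} = 0$ then $\tilde{\eta}_{1} \geqslant 0$, hence $\tilde{\eta}_{1} > 0$; and if $\tilde{\eta}_{1} = 0$ then $\eta_{1} \leqslant 0$, hence $\eta_{1} < 0$. So assume for contradiction $\eta_{1} = \tilde{\eta}_{1} = 0$. Let $\varphi \in \calC^{\infty}(\bar{M})$ with $\varphi > 0$ on $\bar{M}$ be the Robin eigenfunction from Proposition \ref{pre:prop3} and $\tilde{\varphi} \in \calC^{\infty}(\bar{M})$ with $\tilde{\varphi} > 0$ in $M$, $\tilde{\varphi} = 0$ on $\partial M$, the Dirichlet eigenfunction from Proposition \ref{pre:prop4}, so that $\Box_{g}\varphi = 0$ and $\Box_{g}\tilde{\varphi} = 0$ in $M$. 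Applying Green's second identity and cancelling the zeroth-order terms $S_{g}\varphi\tilde{\varphi}$,
\[
0 = \int_{M} \left( \tilde{\varphi}\, \Box_{g}\varphi - \varphi\, \Box_{g}\tilde{\varphi} \right) \dvol = a\int_{\partial M} \left( \varphi\, \frac{\partial \tilde{\varphi}}{\partial \nu} - \tilde{\varphi}\, \frac{\partial \varphi}{\partial \nu} \right) dS = a\int_{\partial M} \varphi\, \frac{\partial \tilde{\varphi}}{\partial \nu}\, dS,
\]
the last step using $\tilde{\varphi} \equiv 0$ on $\partial M$. On the other hand $\tilde{\eta}_{1} = 0$ gives $a\Delta_{g}\tilde{\varphi} = S_{g}\tilde{\varphi}$, so with $C := \sup_{\bar{M}} \lvert S_{g} \rvert$ we have $(-a\Delta_{g} + C)\tilde{\varphi} = (C - S_{g})\tilde{\varphi} \geqslant 0$ in $M$ while $\tilde{\varphi} > 0$ in $M$ and $\tilde{\varphi} = 0$ on $\partial M$; as $\partial M$ is smooth, Hopf's boundary-point lemma yields $\partial \tilde{\varphi}/\partial \nu < 0$ at every point of $\partial M$. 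Together with $\varphi > 0$ on $\partial M$ the integrand is strictly negative, so $a\int_{\partial M} \varphi\, (\partial \tilde{\varphi}/\partial \nu)\, dS < 0$, contradicting the display and proving (\ref{dirichlet:eqn3}).

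The only genuinely analytic step, and the one I would be most careful about, is the use of Hopf's lemma: since $S_{g}$ need not have a sign, the equation $\Box_{g}\tilde{\varphi} = 0$ must first be rewritten as $(-a\Delta_{g} + C)\tilde{\varphi} \geqslant 0$ with nonpositive zeroth-order coefficient before the boundary-point lemma applies, using $\tilde{\varphi} \geqslant 0$ and $\tilde{\eta}_{1} = 0$. An alternative avoiding Hopf altogether: when $\eta_{1} = \tilde{\eta}_{1} = 0$, the function $\tilde{\varphi} \in H_{0}^{1}(M,g)$ also attains the infimum defining $\eta_{1}$, hence is a critical point of the Robin Rayleigh quotient and so satisfies the natural boundary condition $\partial \tilde{\varphi}/\partial \nu + \frac{2}{p-2} h_{g} \tilde{\varphi} = 0$ on $\partial M$; combined with $\tilde{\varphi}|_{\partial M} = 0$ this forces $\partial \tilde{\varphi}/\partial \nu = 0$ on $\partial M$, so $\tilde{\varphi}$ has vanishing Cauchy data for $\Box_{g}\tilde{\varphi} = 0$ and unique continuation gives $\tilde{\varphi} \equiv 0$, again contradicting $\tilde{\varphi} > 0$ in $M$.
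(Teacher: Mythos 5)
Your treatment of (\ref{dirichlet:eqn2}) is essentially the paper's: the key observation is that the Robin Rayleigh quotient restricted to $H_0^1(M,g)$ loses its boundary term and coincides with the Dirichlet quotient, so $\eta_1 \leqslant \tilde{\eta}_1$. The paper phrases this by plugging the Dirichlet eigenfunction $\tilde{\varphi}$ directly into the Robin quotient rather than arguing about infima over nested spaces, but the mathematics is the same. (Incidentally you correctly include the factor $a$ in the boundary term $\frac{2a}{p-2}\int_{\partial M}h_g u^2\,dS$, whereas the paper's displayed characterization omits it; this is immaterial here since the boundary integral vanishes in the crucial step.)

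For (\ref{dirichlet:eqn3}) you take a genuinely different route. The paper observes that if equality $\tilde{\eta}_1 = \eta_1$ held, then $\tilde{\varphi}$ would minimize the Robin Rayleigh quotient and hence, by simplicity of the first Robin eigenvalue, be a scalar multiple of $\varphi$ --- impossible, since $\varphi > 0$ on $\partial M$ while $\tilde{\varphi}$ vanishes there. This gives $\tilde{\eta}_1 > \eta_1$ unconditionally. Your main argument instead pairs Green's second identity with Hopf's boundary-point lemma: under the hypothesis $\eta_1 = \tilde{\eta}_1 = 0$, Green's identity forces $\int_{\partial M}\varphi\,\partial_\nu\tilde{\varphi}\,dS = 0$, while Hopf gives $\partial_\nu\tilde{\varphi} < 0$ pointwise on $\partial M$, a contradiction. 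This is correct, though as written it establishes only the $c = 0$ case of the strict inequality (which is all (\ref{dirichlet:eqn3}) requires); note that the same Green/Hopf computation would work verbatim at any common eigenvalue $c$, since the interior integrand $\tilde{\varphi}\Box_g\varphi - \varphi\Box_g\tilde{\varphi} = c\tilde{\varphi}\varphi - c\varphi\tilde{\varphi}$ still vanishes, so you could in fact recover the paper's unconditional strict inequality this way. Your alternative argument --- that at a common minimum $\tilde{\varphi}$ would satisfy both boundary conditions, hence have vanishing Cauchy data, and unique continuation forces $\tilde{\varphi}\equiv 0$ --- is closer in spirit to the paper's appeal to the structure of first eigenfunctions and also yields the unconditional statement.

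One small imprecision: you describe $(-a\Delta_g + C)$, with $C = \sup_{\bar M}|S_g| \geqslant 0$, as a rewriting with ``nonpositive zeroth-order coefficient,'' which it is not. The rewriting is in fact unnecessary: because $\tilde{\varphi}$ attains its boundary minimum with value exactly zero, the standard form of Hopf's lemma (Gilbarg--Trudinger Lemma~3.4) applies irrespective of the sign of the zeroth-order coefficient, so one can apply it directly to $\Box_g\tilde{\varphi} = 0$. The conclusion $\partial_\nu\tilde{\varphi} < 0$ on $\partial M$ is correct either way.
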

\begin{proof}
Recall the characterizations of $ \eta_{1} $ and $ \tilde{\eta}_{1} $
\begin{align*}
\eta_{1} & = \inf_{u \in H^{1}(M, g) } \frac{\int_{M} a \lvert \nabla_{g} u \rvert^{2}\dvol + \int_{M} S_{g} u^{2} \dvol + \int_{\partial M} \frac{2}{p-2} h_{g} u^{2} dS }{\int_{M} u^{2} \dvol}; \\
\tilde{\eta}_{1} & = \inf_{u \in H_{0}^{1}(M, g) } \frac{\int_{M} a \lvert \nabla_{g} u \rvert^{2}\dvol + \int_{M} S_{g} u^{2} \dvol }{\int_{M} u^{2} \dvol}.
\end{align*}
If $ \tilde{\eta}_{1} < 0 $ with eigenfunction $ \tilde{\varphi} $, note that $ \tilde{\varphi} = 0 $ on $ \partial M $, it follows that
\begin{equation}\label{dirichlet:eqn4}
\begin{split}
0 \geqslant \tilde{\eta}_{1} & = \frac{\int_{M} a \lvert \nabla_{g} \tilde{\varphi} \rvert^{2}\dvol + \int_{M} S_{g} \tilde{\varphi}^{2} \dvol }{\int_{M} \tilde{\varphi}^{2} \dvol} \\
& = \frac{\int_{M} a \lvert \nabla_{g} \tilde{\varphi} \rvert^{2}\dvol + \int_{M} S_{g} \tilde{\varphi}^{2} \dvol + \int_{\partial M} \frac{2}{p-2} h_{g} \tilde{\varphi}^{2} dS }{\int_{M} \tilde{\varphi}^{2} \dvol} \geqslant \eta_{1}.
\end{split}
\end{equation}
Hence $ \eta_{1} \leqslant \tilde{\eta}_{1} \leqslant 0 $. When $ \eta_{1} > 0 $, we have
\begin{align*}
\tilde{\eta}_{1} & = \frac{\int_{M} a \lvert \nabla_{g} \tilde{\varphi} \rvert^{2}\dvol + \int_{M} S_{g} \tilde{\varphi}^{2} \dvol }{\int_{M} \tilde{\varphi}^{2} \dvol} \\
& = \frac{\int_{M} a \lvert \nabla_{g} \tilde{\varphi} \rvert^{2}\dvol + \int_{M} S_{g} \tilde{\varphi}^{2} \dvol + \int_{\partial M} \frac{2}{p-2} h_{g} \tilde{\varphi}^{2} dS }{\int_{M} \tilde{\varphi}^{2} \dvol} \geqslant \eta_{1} \geqslant 0.
\end{align*}
Therefore $ \tilde{\eta}_{1} \geqslant \eta_{1} \geqslant 0 $. A further observation of (\ref{dirichlet:eqn4}) implies that
\begin{equation}\label{dirichlet:eqn5}
\tilde{\eta}_{1} = \frac{\int_{M} a \lvert \nabla_{g} \tilde{\varphi} \rvert^{2}\dvol + \int_{M} S_{g} \tilde{\varphi}^{2} \dvol + \int_{\partial M} \frac{2}{p-2} h_{g} \tilde{\varphi}^{2} dS }{\int_{M} \tilde{\varphi}^{2} \dvol} > \eta_{1}
\end{equation}
for all cases since it is well-known that the eigenspace with respect to the first eigenvalue is one-dimensional. Thus (\ref{dirichlet:eqn3}) follows from (\ref{dirichlet:eqn5}).
\end{proof}
\medskip

The conformal invariances of the signs of $ \eta_{1} $ and $ \tilde{\eta}_{1} $ indicate that the existence of solution of the boundary Yamabe problem (\ref{dirichlet:eqn1}) with Dirichlet boundary condition could be classified by the sign of either $ \eta_{1} $ or $ \tilde{\eta}_{1} $. As in \cite{XU4, XU5}, existences of solutions of (\ref{dirichlet:eqn1}) with given $ \phi > 0 $ on $ \partial M $ and appropriate choices of $ \lambda $ is determined by the sign of $ \eta_{1} $. We start with the case $ \eta_{1} = 0 $.
\begin{theorem}\label{dirichlet:thm1}
Let $ (\bar{M}, g) $ be a compact Riemannian manifold with smooth boundary $ \partial M $. Let $ \phi > 0 $ be a smooth function on $ \partial M $. If $ \eta_{1} = 0 $, then (\ref{dirichlet:eqn1}) has a real solution $ u \in \calC^{\infty}(M) $, $ u > 0 $ on $ \bar{M} $ with $ \lambda = 0 $.
\end{theorem}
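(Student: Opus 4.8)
The plan is to exploit the fact that $\lambda = 0$ turns the boundary Yamabe equation (\ref{dirichlet:eqn1}) into the \emph{linear} Dirichlet problem $\Box_{g} u = 0$ in $M$, $u = \phi$ on $\partial M$, and to produce a positive solution by feeding ordered classical sub- and super-solutions into the monotone iteration scheme of Theorem \ref{iteration:thm1} (applied with $\lambda = 0$). So the whole argument reduces to exhibiting a positive supersolution $u_{+}$ and a positive subsolution $u_{-}$ with $0 \leqslant u_{-} \leqslant u_{+}$, $u_{-} \not\equiv 0$, satisfying $\Box_{g} u_{+} \geqslant 0$ in $M$, $u_{+} \geqslant \phi$ on $\partial M$ and $\Box_{g} u_{-} \leqslant 0$ in $M$, $u_{-} \leqslant \phi$ on $\partial M$.

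The building block for both is the positive solution of the minimal-boundary Escobar problem in the flat case. Since $\eta_{1} = 0$, Theorem \ref{pre:thm2}(i) furnishes a function $v \in \calC^{\infty}(\bar{M})$ with $v > 0$ on $\bar{M}$, $\Box_{g} v = 0$ in $M$ and $B_{g} v = 0$ on $\partial M$. Set $\psi := v|_{\partial M}$, a smooth strictly positive function on $\partial M$, and put
$$c_{-} := \min_{\partial M} \frac{\phi}{\psi} > 0, \qquad C_{+} := \max_{\partial M} \frac{\phi}{\psi}.$$
I would then take $u_{+} := C_{+} v$ and $u_{-} := c_{-} v$. Because $v$ is $\Box_{g}$-harmonic, $\Box_{g} u_{\pm} = 0$, so $u_{+}$ satisfies $\Box_{g} u_{+} = 0 \geqslant 0 = \lambda u_{+}^{p-1}$ and $u_{-}$ satisfies $\Box_{g} u_{-} = 0 \leqslant 0 = \lambda u_{-}^{p-1}$ in the classical sense; on $\partial M$ we have $u_{+} = C_{+} \psi \geqslant \phi$ and $u_{-} = c_{-} \psi \leqslant \phi$ by the choice of $C_{+}, c_{-}$; and $0 < c_{-} \leqslant C_{+}$ together with $v > 0$ gives $0 \leqslant u_{-} \leqslant u_{+}$ with $u_{-} \not\equiv 0$. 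Thus the hypotheses (\ref{iteration:eqn1}) of Theorem \ref{iteration:thm1} hold with $\lambda = 0$, and applying that theorem yields $u \in \calC^{\infty}(M) \cap \calC^{1, \alpha}(\bar{M})$ solving $\Box_{g} u = 0$ in $M$, $u = \phi$ on $\partial M$, with $u_{-} \leqslant u \leqslant u_{+}$ on $M$. Positivity on all of $\bar{M}$ is then immediate: in the interior $u \geqslant u_{-} = c_{-} v > 0$, while on the boundary $u = \phi > 0$ by hypothesis. This is exactly (\ref{dirichlet:eqn1}) with $\lambda = 0$.

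I do not expect a serious obstacle here; the entire content is recognizing that $\lambda = 0$ linearizes the problem and that the $\eta_{1} = 0$ case of the Escobar problem supplies a globally positive $\Box_{g}$-harmonic function to rescale into a sub/supersolution sandwich. As an alternative one could argue directly: by Proposition \ref{dirichlet:prop3}, $\eta_{1} = 0$ forces $\tilde{\eta}_{1} > 0$, so $\Box_{g}$ with Dirichlet data is coercive on $H_{0}^{1}(M, g)$ and the linear problem has a unique solution; positivity would then follow by the substitution $u = vw$, which reduces $\Box_{g} u = 0$ to $\operatorname{div}(v^{2} \nabla w) = 0$, an equation with no zeroth-order term to which the maximum principle applies directly. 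But the monotone-iteration route keeps the proof inside the framework already set up in \S3, so that is the one I would write out.
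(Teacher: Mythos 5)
Your proof is correct, and it takes a somewhat cleaner route than the paper's. The paper constructs the sub-solution by solving a fresh auxiliary linear problem $-a\Delta_{g} u_{1} + C u_{1} = 0$ in $M$, $u_{1} = \phi$ on $\partial M$ with $C \geqslant \max\{|K_{1}|, |K_{2}|\}$ dominating $|S_{g}|$, and uses the maximum principle to get positivity of $u_{1}$; it then only uses a large rescaling of the Robin eigenfunction $\varphi$ (with $\eta_{1} = 0$) as the super-solution. You observe instead that the single positive $\Box_{g}$-harmonic function $v$ from Theorem \ref{pre:thm2}(i) (equivalently, the eigenfunction itself) already works for both sides of the sandwich: since $\Box_{g}$ is linear and $\lambda = 0$ kills the nonlinearity, any positive scalar multiple of $v$ is simultaneously a sub- and super-solution, and choosing $c_{-} = \min_{\partial M}(\phi/\psi)$, $C_{+} = \max_{\partial M}(\phi/\psi)$ exactly clamps $\phi$ between $c_{-}\psi$ and $C_{+}\psi$ on the boundary. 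This removes the separate sub-solution construction and the bound $K_{1} \leqslant S_{g} \leqslant K_{2}$, at the small cost of invoking Theorem \ref{pre:thm2} rather than the elementary eigenvalue Proposition \ref{pre:prop3} — though in the $\eta_{1} = 0$ case these produce the same function, so the cost is nominal. Both arguments end by feeding the ordered pair into Theorem \ref{iteration:thm1}; your observation in the final paragraph that $\tilde{\eta}_{1} > 0$ gives existence by coercivity but not positivity matches the paper's Remark \ref{dirichlet:re1}.
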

\begin{proof} By assumption of $ (\bar{M}, g) $, the scalar curvature is bounded on $ \bar{M} $, i.e.
\begin{equation*}
K_{1} \leqslant S_{g} \leqslant K_{2} \; {\rm on} \; \bar{M}
\end{equation*}
for some constants $ K_{1}, K_{2} $. Choose a positive constant $ C > 0 $ such that
\begin{equation*}
C \geqslant \max \lbrace \lvert K_{1} \rvert, \lvert K_{2} \rvert \rbrace.
\end{equation*}
Consider the PDE
\begin{equation}\label{dirichlet:eqn6}
-a\Delta_{g} u + Cu = 0 \; {\rm in} \; M, u = \phi \; {\rm on} \; \partial M.
\end{equation}
By standard elliptic theory, it follows that (\ref{dirichlet:eqn6}) admits a unique solution $ u_{1} \in H^{1}(M, g) $. By standard elliptic regularity, it follows that $ u_{1} \in \calC^{\infty}(M) \cap \calC^{0}(\bar{M}) $. Since $ C > 0 $, maximum principle says that $ \min_{\bar{M}} u_{1} = \min_{\partial M} \phi > 0 $. Lastly we see that
\begin{equation*}
-a\Delta_{g} u_{1} + S_{g} u_{1} \leqslant  -a\Delta_{g} u_{1} + \lvert S_{g} \rvert u_{1} \leqslant -a\Delta_{g} u_{1} + \max \lbrace \lvert K_{1} \rvert, \lvert K_{2} \rvert \rbrace u_{1} \leqslant -a\Delta_{g} u_{1} +C u_{1} = 0.
\end{equation*}
It follows that $ u_{1} > 0 $ is a sub-solution of (\ref{dirichlet:eqn1}) with $ \lambda = 0 $. Since $ \eta_{1} = 0 $, the corresponding eigenfunction $ \varphi > 0 $ on $ \bar{M} $ satisfies
\begin{equation*}
-a\Delta_{g} \varphi + S_{g}\varphi = 0 \; {\rm in} \; M, \frac{\partial \varphi} {\partial \nu} + \frac{2}{p-2} h_{g} \varphi = 0 \; {\rm on} \; M.
\end{equation*}
Since any scale of $ \varphi $ also solves the eigenvalue problem, we set $ u_{1}' : = \delta \varphi $, $ \delta > 0 $ large enough so that
\begin{equation*}
\min_{\bar{M}} u_{1}' \geqslant \max_{\bar{M}} u_{1}, \min_{\partial M} u_{1}' \geqslant \max_{\partial M} \phi.
\end{equation*}
Fix this $ \delta $. It follows that
\begin{equation*}
-a\Delta_{g} u_{1}' + S_{g} u_{1}' = 0 \; {\rm in} \; M, u_{1}' \geqslant \phi \; {\rm on} \; \partial M.
\end{equation*}
It follows from the choice of $ \delta $ that $ u_{1}' \geqslant u_{1} > 0 $ is a super-solution of (\ref{dirichlet:eqn1}) with $ \lambda = 0 $. Since both $ u_{1}, u_{1}' \in \calC^{0}(\bar{M}) \cap \calC_{c}^{\infty}(M) $, Theorem \ref{iteration:thm1} implies the existence of $ u \in \calC^{\infty}(M) $ with $ 0 < u_{1} \leqslant u \leqslant u_{1}' $ which solves (\ref{dirichlet:eqn1}).
\end{proof}
\begin{remark}\label{dirichlet:re1} $ \eta_{1} = 0 $ implies $ \tilde{\eta}_{1} > 0 $ due to Proposition \ref{dirichlet:prop3}, thus it is immediate by Fredholm dichotomy that
\begin{equation*}
-a\Delta_{g} u + S_{g} u = 0 \; {\rm in} \; M, u = \phi \; {\rm on} \; \partial M
\end{equation*}
admits a solution, which is exactly the unique solution of (\ref{dirichlet:eqn1}) when $ \lambda = 0 $. Unfortunately, we cannot examine the sign of this solution due to the lack of information with respect to $ S_{g} $. Thus we apply monotone iteration scheme to show the positivity of $ u $ in Theorem \ref{dirichlet:thm1}.
\end{remark}
\medskip

Next we consider the case when $ \eta_{1} < 0 $. Historically this is an easier case for the Yamabe problem and the boundary Yamabe problem. We apply monotone iteration scheme in Theorem \ref{iteration:thm1} again.
\begin{theorem}\label{dirichlet:thm2}
Let $ (\bar{M}, g) $ be a compact Riemannian manifold with smooth boundary $ \partial M $. Let $ \phi > 0 $ be a smooth function on $ \partial M $. If $ \eta_{1} < 0 $, then (\ref{dirichlet:eqn1}) has a real solution $ u \in \calC^{\infty}(M) $, $ u > 0 $ on $ \bar{M} $ with some $ \lambda < 0 $.
\end{theorem}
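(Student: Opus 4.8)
The plan is to solve (\ref{dirichlet:eqn1}) by the monotone iteration scheme of Theorem \ref{iteration:thm1}, after fixing a negative value of $\lambda$ and producing an ordered pair of classical sub- and super-solutions. First I would fix any $\lambda < 0$ (for definiteness one may take $\lambda = \eta_{1}$), and invoke Proposition \ref{pre:prop3} to obtain the first eigenfunction $\varphi \in \calC^{\infty}(\bar{M})$ of $\Box_{g}$ with Robin condition, with $\varphi > 0$ on $\bar{M}$, solving $\Box_{g}\varphi = \eta_{1}\varphi$ in $M$ and $B_{g}\varphi = 0$ on $\partial M$. Only the positivity of $\varphi$ and its interior equation will be used.

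For the sub-solution I would set $u_{-} = \epsilon\varphi$ with $\epsilon > 0$ small. Then $\Box_{g}u_{-} = \epsilon\eta_{1}\varphi$, so the sub-solution inequality $\Box_{g}u_{-} \leqslant \lambda u_{-}^{p-1}$ in $M$ is, after dividing by $\epsilon\varphi > 0$, the pointwise inequality $\eta_{1} \leqslant \lambda\,\epsilon^{p-2}\varphi(x)^{p-2}$ for all $x \in \bar{M}$. Since $p - 2 = \tfrac{4}{n-2} > 0$, $\varphi$ is bounded on $\bar{M}$, and both $\eta_{1}$ and $\lambda$ are negative, the right-hand side is negative and tends to $0$ uniformly as $\epsilon \to 0$; hence the inequality holds once $\lvert\lambda\rvert\,\epsilon^{p-2}(\max_{\bar{M}}\varphi)^{p-2} \leqslant \lvert\eta_{1}\rvert$. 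Shrinking $\epsilon$ further gives $u_{-} = \epsilon\varphi \leqslant \phi$ on $\partial M$ (using $\min_{\partial M}\phi > 0$), and clearly $u_{-} \not\equiv 0$ and $u_{-} \in \calC^{0}(\bar{M}) \cap \calC^{\infty}(M)$.

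For the super-solution I would take $u_{+} \equiv M_{0}$, a positive constant. Then $\Box_{g}u_{+} = S_{g}M_{0}$, so the super-solution inequality $\Box_{g}u_{+} \geqslant \lambda u_{+}^{p-1}$ reduces to $S_{g}(x) \geqslant \lambda M_{0}^{p-2}$ for all $x \in \bar{M}$; since $S_{g}$ is bounded below on $\bar{M}$ while $\lambda M_{0}^{p-2} \to -\infty$ as $M_{0} \to \infty$ (because $\lambda < 0$), this holds for $M_{0}$ large, and enlarging $M_{0}$ if necessary arranges $M_{0} \geqslant \phi$ on $\partial M$ and $M_{0} \geqslant \epsilon\varphi$ on $\bar{M}$, so $0 \leqslant u_{-} \leqslant u_{+}$. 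With both inequalities in (\ref{iteration:eqn1}) verified in the classical sense, Theorem \ref{iteration:thm1} yields $u \in \calC^{\infty}(M) \cap \calC^{1,\alpha}(\bar{M})$ solving (\ref{dirichlet:eqn1}) with $\epsilon\varphi = u_{-} \leqslant u \leqslant u_{+}$; in particular $u > 0$ on $M$ and $u = \phi > 0$ on $\partial M$, so $u > 0$ on $\bar{M}$, and $\tilde{g} = u^{p-2}g$ has constant scalar curvature $\lambda < 0$ on $M$.

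I do not expect a genuine obstacle here: as in the Yamabe and boundary Yamabe problems, negativity of $\lambda$ makes the superlinear term $\lambda u^{p-1}$ cooperate at both ends — negligible against $\eta_{1}\varphi$ at small amplitude, and dominated by $S_{g}M_{0}$ at large amplitude. The only point to keep in mind is that Theorem \ref{iteration:thm1} requires the differential inequalities in the classical rather than merely weak sense, which is automatic since $\varphi$ and constants are smooth; alternatively one could first pass, via Theorem \ref{pre:thm2}(ii), to a conformal metric with constant negative scalar curvature to make the super-solution estimate even more transparent, but this is not necessary.
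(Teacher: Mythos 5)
Your proof is correct and follows essentially the same route as the paper: sub-solution given by a small scaling of the positive Robin eigenfunction $\varphi$, super-solution given by a large constant, then the monotone iteration scheme of Theorem \ref{iteration:thm1}. The only (cosmetic) difference is the order of quantifiers — you fix $\lambda < 0$ first and then shrink the scaling factor $\epsilon$, whereas the paper first fixes the scaling $u_2 = \delta\varphi$ so that $u_2 \leqslant \phi$ on $\partial M$ and then selects $\lambda$ in the admissible negative range; both orderings produce the same sub-/super-solution pair and verify the same inequalities.
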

\begin{proof} Let $ \varphi $ be the eigenfunction with respect to $ \eta_{1} $, it follows that the smooth function $ \varphi > 0 $ on $ \bar{M} $ solves the PDE
\begin{equation*}
-a\Delta_{g} \varphi + S_{g} \varphi = \eta_{1} \varphi \; {\rm in} \; M, \frac{\partial \varphi}{\partial \nu} + \frac{2}{p - 2} h_{g} \varphi = 0 \; {\rm on} \; \partial M.
\end{equation*}
Since any scale of $ \varphi $ is still an eigenfunction, we set $ u_{2} = \delta \varphi $ with $ \delta $ small enough such that
\begin{equation*}
u_{2} \leqslant \min_{\partial M} \phi.
\end{equation*}
Thus $ u_{2} > 0 $ satisfies
\begin{equation}\label{dirichlet:eqn7}
-a\Delta_{g} u_{2} + S_{g} u_{2}= \eta_{1}u_{2} \; {\rm in} \; M, u_{2} \leqslant \phi \; {\rm on} \; \partial M.
\end{equation}
We choose $ \lambda < 0 $ such that
\begin{equation*}
0 > \lambda \geqslant \eta_{1} \frac{\min_{\bar{M}} u_{2}}{\max_{\bar{M}} u_{2}^{p-1}} \Leftrightarrow \eta_{1} \min_{\bar{M}} u_{2} \leqslant \lambda \max_{\bar{M}} u_{2}^{p-1}.
\end{equation*}
Fix this $ \lambda $. Since both $ \eta_{1} $ and $ \lambda $ are negative, it follows that
\begin{equation*}
\eta_{1} u_{2} \leqslant \eta_{1} \min_{\bar{M}} u_{2} \leqslant \lambda \max_{\bar{M}} u_{2}^{p-1} \leqslant \lambda u_{2}^{p-1}.
\end{equation*}
Therefore
\begin{equation*}
-a\Delta_{g} u_{2} + S_{g} u_{2} \leqslant \lambda u_{2}^{p-1} \; {\rm in} \; M, u_{2} \leqslant \phi \; {\rm on} \; \partial M.
\end{equation*}
Hence $ u_{2} $ is a sub-solution of (\ref{dirichlet:eqn1}) with the fixed $ \lambda < 0 $. For super-solution, we choose a constant $ C > 0 $ large enough such that
\begin{equation*}
C \geqslant \max_{\bar{M}} u_{2}, \min_{\bar{M}} S_{g} \geqslant \lambda C^{p-2}, C \geqslant \max_{\partial M} \phi.
\end{equation*}
This can be done since $ \lambda < 0 $ and $ S_{g} $ is smooth up to $ \partial M $ and hence is bounded below. Set $ u_{2}' = C $, it follows that
\begin{equation*}
-a\Delta_{g} u_{2}' + S_{g} u_{2}' \geqslant \lambda u_{2}'^{p-1} \; {\rm in} \; M, u_{2}' \geqslant \phi \; {\rm on} \; \partial M.
\end{equation*}
Hence $ u_{2}' $ is a super-solution of (\ref{dirichlet:eqn1}) with the fixed $ \lambda < 0 $ satisfies $ 0 < u_{2} \leqslant u_{2}' $ on $ \partial M $. By Theorem \ref{iteration:thm1}, there exist a real function $ u \in \calC^{\infty}(M) $ solves (\ref{dirichlet:eqn1}) with the fixed $ \lambda < 0 $ , which satisfies
\begin{equation*}
0 < u_{2} \leqslant u \leqslant u_{2}'.
\end{equation*}
\end{proof}
\medskip

Lastly we consider the case $ \eta_{1} > 0 $.
\begin{theorem}\label{dirichlet:thm3}
Let $ (\bar{M}, g) $ be a compact Riemannian manifold with smooth boundary $ \partial M $. Let $ \phi > 0 $ be a smooth function on $ \partial M $. If $ \eta_{1} > 0 $, then (\ref{dirichlet:eqn1}) has a real solution $ u \in \calC^{\infty}(M) $, $ u > 0 $ on $ \bar{M} $ with some $ \lambda > 0 $.
\end{theorem}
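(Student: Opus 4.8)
The plan is to construct, for a suitable choice of the constant $\lambda>0$, a positive classical sub-solution and a positive classical super-solution of (\ref{dirichlet:eqn1}) and then invoke the monotone iteration scheme, Theorem \ref{iteration:thm1}. In contrast with the cases $\eta_{1}\leqslant 0$, here a rescaled Robin eigenfunction cannot serve as a super-solution (scaling it up destroys the supersolution inequality because $p-2>0$), and a large constant cannot either (keeping $\Box_{g}u_{+}\geqslant\lambda u_{+}^{p-1}$ while also having $u_{+}\geqslant\phi$ on $\partial M$ would force $S_{g}>0$), so the super-solution must carry more information. The crucial point is that $\eta_{1}>0$ places us in case (iii) of Theorem \ref{pre:thm2}: there is $w\in\calC^{\infty}(\bar M)$, $w>0$, with $\Box_{g}w=\lambda_{0}w^{p-1}$ in $M$ and $B_{g}w=0$ on $\partial M$ for some constant $\lambda_{0}>0$. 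A large multiple of $w$ will be the super-solution, and this is exactly where the hypothesis $\eta_{1}>0$ is consumed.

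First I would fix the scaling. Put $\mu:=\max_{\partial M}\phi>0$ and $\nu:=\min_{\bar M}w>0$, set $t:=\mu/\nu$, and let $u_{+}:=t\,w\in\calC^{\infty}(\bar M)$. Then $u_{+}\geqslant t\nu=\mu$ on all of $\bar M$, so in particular $u_{+}\geqslant\phi$ on $\partial M$. Since $\Box_{g}u_{+}=t\lambda_{0}w^{p-1}$ and $u_{+}^{p-1}=t^{p-1}w^{p-1}$, the inequality $\Box_{g}u_{+}\geqslant\lambda u_{+}^{p-1}$ reduces (using $w>0$) to $\lambda\leqslant\lambda_{0}t^{2-p}$, where $2-p<0$; accordingly I would now \emph{define} $\lambda:=\lambda_{0}t^{2-p}>0$. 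With this $\lambda$, $u_{+}$ is a classical super-solution of (\ref{dirichlet:eqn1}).

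Next, for the sub-solution, choose a constant $A>0$ with $A\geqslant\sup_{\bar M}|S_{g}|$ (finite since $g$ is smooth on the compact manifold $\bar M$), and solve the linear problem $-a\Delta_{g}u_{-}+Au_{-}=0$ in $M$, $u_{-}=\phi$ on $\partial M$. This is uniquely solvable by Lax--Milgram, and exactly as in the proof of Theorem \ref{dirichlet:thm1} elliptic regularity together with Sobolev embedding gives $u_{-}\in\calC^{\infty}(M)\cap\calC^{0}(\bar M)$; the strong maximum principle (Proposition \ref{pre:prop2}, $A>0$) gives $u_{-}>0$ in $M$, $u_{-}\not\equiv 0$, and $\max_{\bar M}u_{-}=\max_{\partial M}\phi=\mu$, hence $0\leqslant u_{-}\leqslant\mu\leqslant u_{+}$ on $\bar M$. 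Finally, on $M$,
\begin{equation*}
-a\Delta_{g}u_{-}+S_{g}u_{-}=(S_{g}-A)u_{-}\leqslant 0\leqslant\lambda u_{-}^{p-1},
\end{equation*}
since $S_{g}-A\leqslant 0$, $u_{-}\geqslant 0$ and $\lambda>0$; and $u_{-}=\phi$ on $\partial M$. Thus $u_{-}$ is a classical sub-solution. Theorem \ref{iteration:thm1} then produces $u\in\calC^{\infty}(M)\cap\calC^{1,\alpha}(\bar M)$ solving (\ref{dirichlet:eqn1}) with this $\lambda>0$ and $0\leqslant u_{-}\leqslant u\leqslant u_{+}$; since $u\geqslant u_{-}>0$ in $M$ and $u=\phi>0$ on $\partial M$, we conclude $u>0$ on $\bar M$.

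The only substantial step is the super-solution: it rests entirely on importing the positive solution $w$ (with $\lambda_{0}>0$) of the minimal-boundary Yamabe equation from \cite{XU4}, i.e. Theorem \ref{pre:thm2}(iii); the sub-solution and the bookkeeping of the constants $\mu,\nu,t,A,\lambda$ are routine and parallel the arguments of Theorems \ref{dirichlet:thm1} and \ref{dirichlet:thm2}. Equivalently, one could first replace $g$ by the conformal metric $w^{p-2}g$, which has constant positive scalar curvature $\lambda_{0}$, and run the same scheme there with a large constant as super-solution; the two routes are the same in substance.
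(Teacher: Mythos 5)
Your proof is correct and solves the same pair of sub/super-solution inequalities, but via a different super-solution: you import the positive solution $w$ with $\Box_{g}w=\lambda_{0}w^{p-1}$, $B_{g}w=0$, $\lambda_{0}>0$, from Theorem~\ref{pre:thm2}(iii) and rescale it, setting $\lambda=\lambda_{0}t^{2-p}$ afterwards; the paper instead rescales the Robin eigenfunction $\varphi$ of Proposition~\ref{pre:prop3} by a large factor $\delta$ and then chooses $\lambda>0$ small enough that $\eta_{1}\min_{\bar M}(\delta\varphi)\geqslant\lambda\max_{\bar M}(\delta\varphi)^{p-1}$. Your route is precisely the alternative the paper itself sketches in the remark immediately following its proof of Theorem~\ref{dirichlet:thm3}, so both are sanctioned; yours is arguably tidier because the super-solution inequality becomes an equality, but it leans on the full resolution of the Escobar problem in \cite{XU4}, whereas the paper's eigenfunction route needs only the existence of the first Robin eigenpair. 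Your sub-solution and the bookkeeping $0\leqslant u_{-}\leqslant u_{+}$ coincide with the paper's.

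One misstatement in your narrative should be corrected: the claim that \emph{``a rescaled Robin eigenfunction cannot serve as a super-solution (scaling it up destroys the super-solution inequality because $p-2>0$)''} is false, and the paper's own proof is the counterexample. After fixing $\delta$ large (to dominate the sub-solution and $\phi$ on $\partial M$), one picks $\lambda>0$ small; since the theorem asks only for \emph{some} $\lambda>0$, the free choice of $\lambda$ absorbs the factor $\delta^{p-2}$. Your objection would only be valid if $\lambda$ were prescribed in advance, which it is not. The argument you actually give is unaffected by this, but the stated motivation for choosing your route is wrong.
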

\begin{proof} By the same reason, we have
\begin{equation*}
K_{1} \leqslant S_{g} \leqslant K_{2} \; {\rm on} \; \bar{M}
\end{equation*}
for some constants $ K_{1}, K_{2} $. Choose $ C > 0 $ large enough so that
\begin{equation*}
C \geqslant \max \lbrace \lvert K_{1} \rvert, \lvert K_{2} \rvert \rbrace.
\end{equation*}
Consider the PDE
\begin{equation}\label{dirichlet:eqn8}
-a\Delta_{g} u + Cu = 0 \; {\rm in} \; M, u = \phi \; {\rm on} \; \partial M.
\end{equation}
By the same reason as in Theorem \ref{dirichlet:thm1}, (\ref{dirichlet:eqn8}) admits a real, positive solution $ u_{3} \in \calC^{\infty}(M) \cap \calC^{0}(\bar{M}) $.

Now we determine the choice of $ \lambda $. Let $ \varphi $ be the eigenfunction with respect to $ \eta_{1} $ satisfying
\begin{equation}\label{dirichlet:eqn9}
-a\Delta_{g} \varphi + S_{g} \varphi = \eta_{1} \varphi \; {\rm in} \; M, \frac{\partial \varphi}{\partial \nu} + \frac{2}{p - 2} h_{g} \varphi = 0 \; {\rm on} \; \partial M.
\end{equation}
Denote $ u_{3}' = \delta \varphi $ with positive scalar $ \delta > 0 $ large enough so that
\begin{equation*}
\min_{\bar{M}} u_{3}' \geqslant \max_{\partial M} u_{3}, \min_{\partial M} u_{3}' \geqslant \max_{\partial M} \phi.
\end{equation*}
Fix this $ \delta $. It is straightforward that $ u_{3}' $ also solves (\ref{dirichlet:eqn9}). Choose $ \lambda > 0 $ small enough so that
\begin{equation}\label{dirichlet:eqn10}
0 < \lambda \leqslant \eta_{1} \frac{\min_{\bar{M}} u_{3}'}{\max_{\bar{M}} \left( u_{3}' \right)^{p-1}} \Leftrightarrow \eta_{1} \min_{\bar{M}} u_{3}' \geqslant \lambda \max_{\bar{M}} \left( u_{3}' \right)^{p-1}.
\end{equation}
Fix this $ \lambda $. Since both $ \eta_{1} $ and $ \lambda $ are positive, we have
\begin{align*}
-a\Delta_{g} u_{3}' + S_{g} u_{3}' & = \eta_{1} u_{3}' \geqslant \eta_{1} \min_{\bar{M}} u_{3}' \geqslant \lambda \max_{\bar{M}} \left( u_{3}' \right)^{p-1} \geqslant \lambda \left( u_{3}' \right)^{p-1} \; {\rm in} \; M; \\
u_{3}' & \geqslant \phi \; {\rm on} \; \partial M.
\end{align*}
It follows that $ u_{3}' $ is a super-solution of (\ref{dirichlet:eqn1}) with this fixed $ \lambda > 0 $. By (\ref{dirichlet:eqn8}), positivity of $ u_{3} $ and the choice of $ C $, we have
\begin{align*}
-a\Delta_{g} u_{3} + S_{g} u_{3} & \leqslant -a\Delta_{g} u_{3} +  \max \lbrace \lvert K_{1} \rvert, \lvert K_{2} \rvert \rbrace u_{3} \leqslant -a\Delta_{g} u_{3} + C u_{3} = 0 \leqslant \lambda u_{3}^{p-1} \; {\rm in} \; M; \\
u_{3} & = \phi \; {\rm on} \; \partial M.
\end{align*}
In conclusion, $ u_{3} $ is a sub-solution of (\ref{dirichlet:eqn1}) with the same $ \lambda > 0 $. Furthermore, $ 0 < u_{3} \leqslant u_{3}' $ due to the choice of $ u_{3}' $. Clearly $ u_{3}, u_{3}' \in \calC^{\infty}(M) \cap \calC^{0}(\bar{M}) $, thus by Theorem \ref{iteration:thm1}) the boundary Yamabe equation (\ref{dirichlet:eqn1}) admits a positive solution $ u \in \calC^{\infty}(M) $, $ 0 < u_{3} \leqslant u \leqslant u_{3}' $ with the fixed $ \lambda > 0 $.
\end{proof}
\begin{remark} In (\ref{dirichlet:eqn9}), we use the PDE of the eigenvalue problem to obtain a super-solution. Alternatively, we can apply the result of Escobar problem obtained in \cite{XU4}, which is Theorem \ref{pre:thm2} here, to get a super-solution. To see this, we know that
\begin{equation*}
-a\Delta_{g} u + S_{g} u = D u^{p-1} \; {\rm in} \; M, \frac{\partial u}{\partial \nu} + \frac{2}{p-2} h_{g} u = 0 \; {\rm on} \; \partial M
\end{equation*}
with some positive constant $ D $ is solved by some solution $ u > 0 $ on $ \bar{M} $. We take $ u_{3}' = \delta u $ for $ \delta > 0 $ large enough so that
\begin{equation*}
\min_{\partial M} u_{3}' \geqslant \max_{\partial M} \phi.
\end{equation*}
After the scaling, we see that $ u_{3}' $ satisfies
\begin{equation*}
-a\Delta_{g} u_{3}' + S_{g} u_{3}' = \delta^{2 - p} D (u_{3}')^{p-1} \; {\rm in} \; M, \frac{\partial u_{3}'}{\partial \nu} + \frac{2}{p-2} h_{g} u_{3}' = 0 \; {\rm on} \; \partial M.
\end{equation*}
Therefore we can choose $ \lambda \in(0, \delta^{2-p} D] $, and the rest of the argument in Theorem \ref{dirichlet:thm3} follows. We can apply the result of Theorem \ref{pre:thm2} to other cases by using corresponding versions.
\end{remark}
\medskip

Combining Theorem \ref{dirichlet:thm1}, \ref{dirichlet:thm2} and \ref{dirichlet:thm3}, we reach our first capstone of this article.
\begin{theorem}\label{dirichlet:thethm}
Let $ (\bar{M}, g) $ be a compact manifold with smooth boundary $ \partial M $, $ n = \dim \bar{M} $. Let
\begin{equation*}
\imath : \partial M \hookrightarrow M
\end{equation*}
be the inclusion map. Assume $ \partial M $ to be a $ (n - 1) $-dimensional closed manifold with induced metric. There exists a metric $ \tilde{g} $, conformal to $ g $, that admits a constant scalar curvature on the interior $ M $ of the $ n $-dimensional manifold $ \bar{M} $, and a constant scalar curvature on the $ (n - 1) $-dimensional closed manifold $ \partial M $ with respect to the induced metric $ \imath^{*} \tilde{g} $.
\end{theorem}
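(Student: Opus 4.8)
The plan is to deduce this capstone directly from the three existence results just established for the Dirichlet boundary Yamabe equation (\ref{dirichlet:eqn1}), namely Theorem \ref{dirichlet:thm1}, Theorem \ref{dirichlet:thm2} and Theorem \ref{dirichlet:thm3}, once the Dirichlet datum is supplied by the intrinsic Yamabe problem on the boundary. The only input not yet on the table is a good choice of the boundary function $\phi$, and this comes from treating $\partial M$ as a closed manifold in its own right.

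First I would solve the Yamabe problem on $(\partial M, \imath^{*}g)$. Since $\partial M$ is closed with $\dim \partial M = n - 1 \geqslant 2$, the resolution of the Yamabe problem when $n - 1 \geqslant 3$ (see \cite{PL}, \cite{XU3}) together with the uniformization theorem when $n - 1 = 2$ produces a metric $g_{0}$, pointwise conformal to $\imath^{*}g$, with constant scalar curvature. Writing $g_{0} = \psi\, \imath^{*}g$ for a positive $\psi \in \calC^{\infty}(\partial M)$ and setting $\phi := \psi^{1/(p-2)} \in \calC^{\infty}(\partial M)$, we get $\phi > 0$ and $g_{0} = \phi^{p-2}\, \imath^{*}g$; this $\phi$ is precisely the positive Dirichlet datum appearing in (\ref{dirichlet:eqn1}) (and matches the description of $\phi$ in \S1, with the logarithmic reformulation when $n = 3$). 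Then I would invoke the trichotomy governed by the sign of $\eta_{1}$, the first eigenvalue of $\Box_{g}$ with Robin condition, which is a conformal invariant by Proposition \ref{dirichlet:prop1}: if $\eta_{1} = 0$, Theorem \ref{dirichlet:thm1} yields $u \in \calC^{\infty}(M)$, $u > 0$ on $\bar{M}$, solving (\ref{dirichlet:eqn1}) with $\lambda = 0$; if $\eta_{1} < 0$, Theorem \ref{dirichlet:thm2} yields such a $u$ with some $\lambda < 0$; and if $\eta_{1} > 0$, Theorem \ref{dirichlet:thm3} yields such a $u$ with some $\lambda > 0$.

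To finish, set $\tilde{g} := u^{p-2} g$. By the conformal transformation law for scalar curvature, the equation $-a\Delta_{g} u + S_{g} u = \lambda u^{p-1}$ in $M$ says precisely that $S_{\tilde{g}} \equiv \lambda$ is constant on the interior $M$; and since $u\big|_{\partial M} = \phi$, the induced metric is $\imath^{*}\tilde{g} = (u\big|_{\partial M})^{p-2}\, \imath^{*}g = \phi^{p-2}\, \imath^{*}g = g_{0}$, which has constant scalar curvature as an $(n-1)$-dimensional closed manifold. This produces the asserted $\tilde{g}$. I do not expect a genuine obstacle here, since all the analytic work is already contained in Theorem \ref{iteration:thm1} and Theorems \ref{dirichlet:thm1}--\ref{dirichlet:thm3}; the only point needing a little care is the exponent bookkeeping in the second step, i.e.\ reconciling the $n$-dimensional conformal exponent $p - 2 = \tfrac{4}{n-2}$ on $\bar{M}$ with the $(n-1)$-dimensional Yamabe exponent $\tfrac{4}{n-3}$ natural on $\partial M$ (and the conformal change $e^{2f}$ when $n = 3$), which is harmless because $[\imath^{*}g]$ is the same conformal class regardless of which positive power of the conformal factor is used, so every positive $\phi \in \calC^{\infty}(\partial M)$ is admissible Dirichlet data.
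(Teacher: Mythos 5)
Your proposal is correct and follows essentially the same route as the paper: solve the intrinsic Yamabe (or uniformization) problem on $(\partial M, \imath^{*}g)$ to produce a positive boundary datum $\phi$ with $g_{0} = \phi^{p-2}\imath^{*}g$ of constant scalar curvature, then invoke the $\eta_{1}$-trichotomy (Theorems \ref{dirichlet:thm1}--\ref{dirichlet:thm3}) for the Dirichlet problem and read off $\imath^{*}\tilde{g} = \phi^{p-2}\imath^{*}g = g_{0}$. The only cosmetic difference is that you write the boundary conformal factor directly as $\psi = \phi^{p-2}$ rather than routing through the intermediate exponent $p' = \frac{n+1}{n-3}$ as the paper does, and you correctly observe that this reparametrization is immaterial.
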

\begin{proof} When $ n \geqslant 4 $, there exists a smooth function $ \tilde{\phi} > 0 $ on the $ (n - 1) $-dimensional manifold $ \partial M $ such that
\begin{equation*}
g_{0} = \tilde{\phi}^{\frac{n - 1 + 2}{n - 1 - 2}} \left(\imath^{*} g \right) = \tilde{\phi}^{\frac{n + 1}{n - 3}} \left(\imath^{*} g \right) : = \tilde{\phi}^{p'} \left(\imath^{*} g \right), n \geqslant 4
\end{equation*}
admits a constant scalar curvature on $ \partial M $, due to the result of the Yamabe problem. When $ n = 3 $, there exists a smooth function $ f $ on $ \partial M $ such that
\begin{equation*}
g_{0}' = e^{2f} \left(\imath^{*} g \right), n = 3
\end{equation*}
admits a constant Gaussian curvature, due to the uniformization theorem. Denote
\begin{equation}\label{dirichlet:eqn11}
\phi = \begin{cases} \tilde{\phi}^{\frac{p'}{p - 2}}, & n \geqslant 4 \\ \left(e^{2f} \right)^{\frac{1}{p - 2}}, & n = 3 \end{cases}.
\end{equation}
Fix this $ \phi \in \calC^{\infty}(\partial M ) $ when $ n \geqslant 3 $. By (\ref{dirichlet:eqn11}) we have $ \phi > 0 $ everywhere on $ \partial M $. According to either Theorem \ref{dirichlet:thm1}, \ref{dirichlet:thm2} or \ref{dirichlet:thm3}, depending on the sign of the first eigenvalue $ \eta_{1} $, there exists a smooth function $ u > 0 $ on $ \bar{M} $, $ u = \phi $ on $ \partial M $ such that $ \tilde{g} = u^{p-2} g $ admits a constant scalar curvature on $ M $. The induced metric of $ \tilde{g} $ is of the form
\begin{equation*}
\imath^{*} \tilde{g} = \imath^{*} \left (u^{p-2} g \right) = \phi^{p-2} \imath^{*} g = \begin{cases} \tilde{\phi}^{p'} \imath^{*} g = g_{0}, & n \geqslant 4 \\ e^{2f} \imath^{*} g = g_{0}', & n = 3 \end{cases}.
\end{equation*}
Hence the induced metric $ \imath^{*} \tilde{g} $ admits a constant scalar curvature on $ (n - 1) $-dimensional closed manifold $ \partial M $ simultaneously.
\end{proof}
\medskip

We can also consider $ \partial M $ as part of $ \bar{M} $, and as the next result follows.
\begin{corollary}\label{dirichlet:thecor}
Let $ (\bar{M}, g) $ be a compact manifold with smooth boundary $ \partial M $, $ n = \dim \bar{M} $. Let
\begin{equation*}
\imath : \partial M \hookrightarrow M
\end{equation*}
be the inclusion map. Assume $ \partial M $ to be a $ (n - 1) $-dimensional closed manifold with induced metric. There exists a metric $ \tilde{g} $, conformal to $ g $, that admits a constant scalar curvature on the interior $ M $ of the $ n $-dimensional manifold $ \bar{M} $; meanwhile the metric is unchanged along $ \partial M $.
\end{corollary}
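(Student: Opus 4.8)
The plan is to specialize the trichotomy established in Theorems \ref{dirichlet:thm1}, \ref{dirichlet:thm2}, and \ref{dirichlet:thm3} to the boundary datum $\phi \equiv 1$. Unlike in Theorem \ref{dirichlet:thethm}, where $\phi$ had to be engineered through the Yamabe problem (for $n \geqslant 4$) or the uniformization theorem (for $n = 3$) on $\partial M$ so that $\imath^{*}\tilde{g}$ would become the prescribed constant-scalar-curvature representative of $\imath^{*}g$, here the goal is the stronger pointwise identity $\imath^{*}\tilde{g} = \imath^{*}g$, which is precisely the requirement that the conformal factor $u$ equal $1$ along $\partial M$.

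First I would observe that $\phi \equiv 1$ is a positive function in $\calC^{\infty}(\partial M)$, so the hypotheses of Theorems \ref{dirichlet:thm1}--\ref{dirichlet:thm3} are met verbatim. Depending on the sign of the first eigenvalue $\eta_{1}$ of the conformal Laplacian $\Box_{g}$ with Robin condition (Proposition \ref{pre:prop3}), exactly one of those three theorems applies: it furnishes a constant $\lambda \in \R$ (equal to $0$, negative, or positive, respectively) together with a function $u \in \calC^{\infty}(M)$ with $u > 0$ on $\bar{M}$ solving
\begin{equation*}
-a\Delta_{g} u + S_{g} u = \lambda u^{p-1} \; {\rm in} \; M, \qquad u = 1 \; {\rm on} \; \partial M.
\end{equation*}
Setting $\tilde{g} = u^{p-2} g$, the conformal transformation law for scalar curvature then shows that the scalar curvature of $\tilde{g}$ on $M$ is the constant $\lambda$. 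Finally, since $u\big|_{\partial M} \equiv 1$, the induced metric satisfies $\imath^{*}\tilde{g} = \big(u^{p-2}\big|_{\partial M}\big)\,\imath^{*}g = \imath^{*}g$, i.e.\ the metric is unchanged along $\partial M$, which is the assertion.

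There is no genuine obstacle here beyond correctly invoking the already-proved existence results; the corollary is a direct consequence of the Dirichlet boundary Yamabe theorems with the convenient choice $\phi \equiv 1$. The only subtlety worth recording is that the constant scalar curvature $\lambda$ is not at our disposal: its sign is dictated by the sign of $\eta_{1}$, and for $\eta_{1} \neq 0$ one obtains (from the constructions in the proofs of Theorems \ref{dirichlet:thm2} and \ref{dirichlet:thm3}) only that any $\lambda$ of the correct sign with $\lvert \lambda \rvert$ sufficiently small is admissible, rather than a prescribed value.
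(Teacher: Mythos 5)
Your proof is correct and follows exactly the route the paper takes: the paper's own proof of Corollary \ref{dirichlet:thecor} is simply ``choose $\phi = 1$ on $\partial M$'' and then invoke the argument of Theorem \ref{dirichlet:thethm}, which in turn dispatches to Theorems \ref{dirichlet:thm1}--\ref{dirichlet:thm3} according to the sign of $\eta_{1}$. You have merely unfolded that one-line proof into its explicit steps, including the observation that $u\big|_{\partial M}\equiv 1$ forces $\imath^{*}\tilde{g}=\imath^{*}g$.
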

\begin{proof} We choose $ \phi = 1 $ on $ \partial M $. The rest are the same as above.
\end{proof}
\medskip

Theorem \ref{dirichlet:thm1} through \ref{dirichlet:thm3} are classified by the sign of $ \eta_{1} $. Due to the relation between $ \eta_{1} $ and $ \tilde{\eta}_{1} $ in Proposition \ref{dirichlet:prop3}, we can check the solvability of (\ref{dirichlet:eqn1}) in terms of the sign of $ \tilde{\eta}_{1} $ with appropriate choices of $ \lambda $. The first result is given below when $ \tilde{\eta}_{1} < 0 $.
\begin{corollary}\label{dirichlet:cor1}
Let $ (\bar{M}, g) $ be a compact Riemannian manifold with smooth boundary $ \partial M $. Let $ \phi > 0 $ be a smooth function on $ \partial M $. If $ \tilde{\eta}_{1} < 0 $, then (\ref{dirichlet:eqn1}) has a real solution $ u \in \calC^{\infty}(M) $, $ u > 0 $ on $ \bar{M} $ with some $ \lambda < 0 $.
\end{corollary}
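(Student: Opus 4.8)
The plan is to reduce this statement immediately to the case already classified by the sign of $\eta_1$, using the comparison between the Dirichlet and Robin first eigenvalues. First I would invoke Proposition~\ref{dirichlet:prop3}: the hypothesis $\tilde\eta_1 < 0$ in particular gives $\tilde\eta_1 \leqslant 0$, and the first implication in (\ref{dirichlet:eqn2}) then yields $\eta_1 \leqslant \tilde\eta_1 \leqslant 0$. Since the inequality $\tilde\eta_1 < 0$ is strict, this forces $\eta_1 \leqslant \tilde\eta_1 < 0$, so $\eta_1 < 0$.

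Once $\eta_1 < 0$ is established, the conclusion is exactly Theorem~\ref{dirichlet:thm2} applied to the given positive boundary data $\phi \in \calC^{\infty}(\partial M)$: there exist a constant $\lambda < 0$ and a function $u \in \calC^{\infty}(M)$, $u > 0$ on $\bar{M}$, solving the boundary Yamabe equation (\ref{dirichlet:eqn1}) with Dirichlet condition $u = \phi$ on $\partial M$. This finishes the argument, and in fact recovers the same sub-/super-solution pair built there (a small multiple of the Robin eigenfunction $\varphi$ as sub-solution and a large constant as super-solution), fed into the monotone iteration scheme of Theorem~\ref{iteration:thm1}.

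I do not expect a genuine obstacle here: all the content lives in Proposition~\ref{dirichlet:prop3} (the eigenvalue comparison, itself a consequence of the conformal identity in Proposition~\ref{dirichlet:prop2} together with the variational characterizations of $\eta_1$ and $\tilde\eta_1$) and in Theorem~\ref{dirichlet:thm2}, both of which are already in hand. The only point requiring a moment's care is that one must use the strict inequality $\tilde\eta_1 < 0$ to pass to the strict inequality $\eta_1 < 0$, so that it is Theorem~\ref{dirichlet:thm2} — and not the borderline Theorem~\ref{dirichlet:thm1} — that applies; this is immediate from $\eta_1 \leqslant \tilde\eta_1$. It is worth remarking that the sign of $\lambda$ obtained is $\lambda < 0$, consistent with the general principle that negative first eigenvalue forces negative constant scalar curvature.
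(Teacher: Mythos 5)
Your proof is correct and is essentially the same as the paper's: deduce $\eta_1 \leqslant \tilde\eta_1 < 0$ from the eigenvalue comparison and then invoke Theorem~\ref{dirichlet:thm2}. The only difference is cosmetic — you cite Proposition~\ref{dirichlet:prop3}, which is the correct source of the comparison, while the paper cites Proposition~\ref{dirichlet:prop1}, apparently a typo.
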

\begin{proof} By Proposition \ref{dirichlet:prop1}, $ \tilde{\eta}_{1} < 0 $ implies $ \eta_{1} < 0 $, the rest follows by Theorem \ref{dirichlet:thm2}.
\end{proof}
\medskip

Unlike $ \tilde{\eta}_{1} < 0 $ situation, we can not say much about the case $ \tilde{\eta}_{1} > 0 $. A partial result is given below.
\begin{theorem}\label{dirichlet:thm4}
Let $ (\bar{M}, g) $ be a compact Riemannian manifold with smooth boundary $ \partial M $. Let $ \phi > 0 $ be a smooth function on $ \partial M $. If $ S_{g} > 0 $ everywhere on $ \bar{M} $, then (\ref{dirichlet:eqn1}) has a real solution $ u \in \calC^{\infty}(M) $, $ u > 0 $ on $ \bar{M} $ with some $ \lambda > 0 $.
\end{theorem}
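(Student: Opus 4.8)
The plan is to invoke the monotone iteration scheme of Theorem \ref{iteration:thm1} with an appropriately small constant $\lambda > 0$; the hypothesis $S_{g} > 0$ on all of $\bar{M}$ is precisely what makes a (large) constant available as a super-solution. Since $\bar{M}$ is compact and $S_{g} > 0$ everywhere, fix constants with $0 < K_{1} \leqslant S_{g} \leqslant K_{2}$ on $\bar{M}$, where $K_{1} = \min_{\bar{M}} S_{g}$ and $K_{2} = \max_{\bar{M}} S_{g}$. (In particular this already forces $\tilde{\eta}_{1} \geqslant K_{1} > 0$, which is why the statement sits with the $\tilde{\eta}_{1} > 0$ discussion; but the sign of $\eta_{1}$ will play no role in the argument.)

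First I would construct the sub-solution exactly as in the proof of Theorem \ref{dirichlet:thm1}. Pick a constant $C_{0} \geqslant K_{2} > 0$ and solve the linear Dirichlet problem $-a\Delta_{g} u_{1} + C_{0} u_{1} = 0$ in $M$, $u_{1} = \phi$ on $\partial M$; by Lax--Milgram, the elliptic regularity of Theorem \ref{pre:thm1}, and Sobolev embedding one gets $u_{1} \in \calC^{\infty}(M) \cap \calC^{0}(\bar{M})$, and since $C_{0} > 0$ the maximum principle (Proposition \ref{pre:prop2}) gives $u_{1} > 0$ on $\bar{M}$ with $\min_{\bar{M}} u_{1} = \min_{\partial M} \phi > 0$. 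Because $C_{0} \geqslant S_{g}$ and $u_{1} > 0$, for \emph{every} $\lambda > 0$,
\begin{equation*}
-a\Delta_{g} u_{1} + S_{g} u_{1} = (S_{g} - C_{0}) u_{1} \leqslant 0 \leqslant \lambda u_{1}^{p-1} \; {\rm in} \; M, \qquad u_{1} = \phi \; {\rm on} \; \partial M,
\end{equation*}
so $u_{1} \not\equiv 0$ is a sub-solution of (\ref{dirichlet:eqn1}) regardless of the eventual choice of $\lambda > 0$.

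Next I would take the super-solution to be a constant. Choose $C > 0$ with $C \geqslant \max\{\max_{\partial M}\phi,\ \max_{\bar{M}} u_{1}\}$ — note this choice is independent of $\lambda$ — and then fix $\lambda$ with $0 < \lambda \leqslant K_{1} C^{2-p}$, which is possible since $K_{1} > 0$ and $2 - p < 0$. Setting $u_{+} \equiv C$, and using $\Delta_{g} C = 0$,
\begin{equation*}
-a\Delta_{g} u_{+} + S_{g} u_{+} = S_{g} C \geqslant K_{1} C \geqslant \lambda C^{p-1} = \lambda u_{+}^{p-1} \; {\rm in} \; M, \qquad u_{+} = C \geqslant \phi \; {\rm on} \; \partial M,
\end{equation*}
so $u_{+}$ is a super-solution of (\ref{dirichlet:eqn1}) for this $\lambda > 0$, with $0 < u_{1} \leqslant u_{+}$ on $\bar{M}$. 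Since $u_{1}, u_{+} \in \calC^{0}(\bar{M}) \cap \calC^{\infty}(M)$, Theorem \ref{iteration:thm1} produces a solution $u \in \calC^{\infty}(M) \cap \calC^{1,\alpha}(\bar{M})$ of (\ref{dirichlet:eqn1}) with this $\lambda > 0$ satisfying $0 < u_{1} \leqslant u \leqslant C$, hence $u > 0$ on $\bar{M}$, which is the claim. I do not expect a genuine obstacle here: the content is the elementary observation that $S_{g} > 0$ everywhere makes the algebraic comparison $S_{g} C \geqslant \lambda C^{p-1}$ solvable with a \emph{positive} $\lambda$, so a constant serves as a super-solution; the only thing to be careful about is the order of the choices — $C$ must be fixed first (and chosen large enough to dominate both $\phi$ and $u_{1}$), and only afterwards is $\lambda \in (0, K_{1}C^{2-p}]$ selected — after which the result is immediate from the iteration scheme already established.
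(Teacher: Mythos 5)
Your proof is correct, and it is a genuine simplification of the paper's argument. The paper constructs the super-solution as $u_{4}' = \tilde{\varphi} + C'$, where $\tilde{\varphi}$ is the first Dirichlet eigenfunction of $\Box_{g}$ (with $\tilde{\eta}_{1} > 0$ since $S_{g} > 0$), and then verifies $-a\Delta_{g} u_{4}' + S_{g} u_{4}' = \tilde{\eta}_{1}\tilde{\varphi} + S_{g} C' \geqslant C' \min_{\bar{M}} S_{g} \geqslant \lambda \max_{\bar{M}} (u_{4}')^{p-1}$; but the term $\tilde{\eta}_{1}\tilde{\varphi} \geqslant 0$ is only discarded in the estimate, so the eigenfunction does no work at all, and you have correctly observed that the bare constant $C$ already suffices once $S_{g} > 0$ everywhere. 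Your sub-solution matches the paper's (the solution of $-a\Delta_{g}u + C_{0}u = 0$, $u = \phi$, which is a sub-solution for every $\lambda > 0$), and your bookkeeping — fix $C$ to dominate $\phi$ and the sub-solution, then choose $\lambda \in (0, K_{1}C^{2-p}]$ — is in the right order. The only thing your argument omits relative to the paper is the passing observation that $S_{g} > 0$ forces $\tilde{\eta}_{1} > 0$; you state it but then correctly never use it, which makes your version cleaner and makes explicit that the eigenvalue machinery of Proposition \ref{pre:prop4} and Proposition \ref{dirichlet:prop3} is not actually needed for this particular theorem.
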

\begin{proof} Due to the characterization of $ \tilde{\eta}_{1} $,
\begin{equation*}
\tilde{\eta}_{1}  = \inf_{u \in H_{0}^{1}(M, g) } \frac{\int_{M} a \lvert \nabla_{g} u \rvert^{2}\dvol + \int_{M} S_{g} u^{2} \dvol }{\int_{M} u^{2} \dvol}.
\end{equation*}
By hypothesis $ S_{g} > 0 $, it follows that $ \tilde{\eta}_{1} > 0 $. Choose a constant $ C > 0 $ such that
\begin{equation*}
C \geqslant \max_{\bar{M}} S_{g}.
\end{equation*}
Consider the PDE
\begin{equation}\label{dirichlet:eqn12}
-a\Delta_{g} u + Cu = 0 \; {\rm in} \; M, u = \phi \; {\rm in} \; \partial M.
\end{equation}
Due to the same argument, it follows that (\ref{dirichlet:eqn12}) admits a positive, smooth solution $ u_{4} \in \calC^{\infty}(M) \cap \calC^{0}(\partial M) $. By eigenvalue problem, the eigenfunction $ \tilde{\varphi} $ with $ \tilde{\eta}_{1} $ satisfies
\begin{equation}\label{dirichlet:eqn13}
-a\Delta_{g} \tilde{\varphi} + S_{g} \tilde{\varphi} = \tilde{\eta}_{1} \tilde{\varphi} \; {\rm in} \; M, \tilde{\varphi} = 0 \; {\rm on} \; \partial M.
\end{equation}
Choose a constant $ C' > 0 $ such that
\begin{equation}\label{dirichlet:eqn14}
C' \geqslant \max_{\partial M} \phi, C' \geqslant \max_{\bar{M}} u_{4}.
\end{equation}
Define $ u_{4}' : = \tilde{\varphi} + C' $. Choose $ \lambda > 0 $ small enough such that
\begin{equation*}
C' \cdot \min_{\bar{M}} S_{g} \geqslant \lambda \max_{\bar{M}} \left( u_{4}' \right)^{p-1}.
\end{equation*}
Fix this $ \lambda > 0 $. This can be done since $ S_{g} > 0 $ everywhere by assumption and $ u_{4}' $ is bounded above. By choices of $ C' $, $ \lambda $ and positivity of $ S_{g} $, we have
\begin{align*}
-a\Delta_{g} u_{4}' + S_{g} u_{4}' & = -a\Delta_{g} (\tilde{\varphi} + C') + S_{g} (\tilde{\varphi} + C') = -a\Delta_{g} \tilde{\varphi} + S_{g} \tilde{\varphi} + S_{g} C' \\
& = \tilde{\eta}_{1} \tilde{\varphi} + S_{g} C' \geqslant  \lambda \max_{\bar{M}} \left( u_{4}' \right)^{p-1} \geqslant \lambda \left( u_{4}' \right)^{p-1}; \\
u_{4}' \bigg |_{\partial M} & = (\tilde{\varphi} + C' ) \bigg |_{\partial M} = C' \geqslant \max_{\partial M} \phi \geqslant \phi.
\end{align*}
It follows that $ u_{4}' $ is a super-solution of (\ref{dirichlet:eqn1}) with the fixed $ \lambda > 0 $. With this $ \lambda $, we check from (\ref{dirichlet:eqn12}) that
\begin{align*}
-a\Delta_{g} u_{4} + S_{g} u_{4} & \leqslant -a\Delta_{g} u_{4} + \max_{\bar{M}} S_{g} u_{4} \leqslant -a\Delta_{g} u_{4} + \max_{\bar{M}} C u_{4} \\
& = 0 \leqslant \lambda u_{4}^{p-1}.
\end{align*}
Hence $ u_{4} $ is a sub-solution of (\ref{dirichlet:eqn1}) with the fixed $ \lambda > 0 $. Furthermore, (\ref{dirichlet:eqn14}) implies $ 0 < u_{4} \leqslant u_{4}' $. Since $ u_{4}, u_{4}' \in \calC^{\infty}(M) \cap \calC^{0}(\bar{M}) $, it follows from Theorem \ref{iteration:thm1} that there exists a real, positive solution $ u \in \calC^{\infty}(M) $ of (\ref{dirichlet:eqn1}) with the fixed $ \lambda > 0 $. In addition, $ 0 < u_{4} \leqslant u \leqslant u_{4}' $.
\end{proof}
\begin{remark}\label{dirichlet:re2}
When $ \tilde{\eta}_{1} = 0 $, the Dirichlet boundary value problem (\ref{dirichlet:eqn1}) is only solvable when $ \phi = 0 $ on $ \partial M $, due to Fredholm dichotomy. Instead, (\ref{dirichlet:eqn1}) admits a positive, smooth solution with some $ \lambda < 0 $ when $ \tilde{\eta}_{1} = 0 $. Since $ \tilde{\eta}_{1} = 0 $ implies $ \eta_{1} < 0 $, due to Proposition \ref{dirichlet:prop3}).
\end{remark}
\medskip

\section{Prescribed Scalar Curvature with Dirichlet Boundary Condition under Conformal Deformation}
Recall the Kazdan and Warner's ``Trichotomy Theorem" .
\begin{theorem}\label{scalar:thm0}\cite{KW3}, \cite{KW4}, \cite{KW}
Let $ M $ be a closed manifold, $ \dim M \geqslant 3 $.

(i) If $ M $ admits some Riemannian metric $ g $ whose scalar curvature function is nonnegative and not identically zero, then any function $ S \in \calC^{M} $ is realized as the scalar curvature function of some metric on $ M $; 

(ii) If $ M $ admits some Riemannian metric $ g $ with vanishing scalar curvature, and does not satisfy the condition in (i), then a function $ S \in \calC^{M} $ is a scalar curvature function of some metric if and only if either $ f < 0 $ at some point of $ M $, or $ f \equiv 0 $;

(iii) If $ M $ does not satisfy both conditions in (i) and (ii), then a function $ S \in \calC^{\infty}(M) $ is a scalar curvature function of some metric if and only if $ f < 0 $ at some point of $ M $.
\end{theorem}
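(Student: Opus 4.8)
\emph{Proof proposal.} The plan is to pass to the conformal method, translate the three cases into the sign of the first eigenvalue of the conformal Laplacian $\Box_{g}=-a\Delta_{g}+S_{g}$, read off the necessity directions from the maximum principle, and then produce a positive solution of the prescribed-curvature equation $-a\Delta_{g}u+S_{g}u=Su^{p-1}$ in each case by sub-/super-solutions; the one genuinely hard case will be (i) with $S$ positive somewhere, where the conformal method alone is obstructed and a non-conformal move is needed.

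\textbf{Reduction and necessity.} First I would record that $\tilde g=u^{p-2}g$ ($u>0$) has scalar curvature $S$ iff $-a\Delta_{g}u+S_{g}u=Su^{p-1}$, and that by the resolution of the Yamabe problem each conformal class carries a constant-scalar-curvature metric whose sign is $\mathrm{sgn}\,\lambda_{1}(\Box_{g})$, a conformal invariant. A first-eigenfunction argument then identifies the three classes: if $S_{g}\ge 0$ and $S_{g}\not\equiv 0$ then $\lambda_{1}(\Box_{g})>0$ (if it were $0$, the positive eigenfunction $\varphi$ would satisfy $-a\Delta_{g}\varphi+S_{g}\varphi=0$, forcing $\int_{M}S_{g}\varphi=0$, a contradiction), and in that case $\varphi^{p-2}g$ has $S>0$ everywhere; so class (i) $=$ ``admits a conformal class with $\lambda_{1}>0$'' $=$ ``admits a positive-scalar-curvature metric'', class (ii) $=$ ``not (i), but admits a class with $\lambda_{1}=0$ (a scalar-flat metric)'', class (iii) $=$ ``every class has $\lambda_{1}<0$''. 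The same eigenfunction argument gives necessity: any metric realizing $S$ with $S\ge 0$, $S\not\equiv 0$ puts $M$ in class (i); and $S\equiv 0$ gives a scalar-flat metric, hence a class with $\lambda_{1}=0$, which is excluded in class (iii). These are exactly the ``only if'' statements of (ii) and (iii).

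\textbf{Functions that are negative somewhere.} For the ``if'' directions the key is a universal lemma: every closed $M^{n}$, $n\ge3$, admits a conformal class with $\lambda_{1}<0$ — I would prove this by installing, on a small ball, a metric perturbation whose scalar curvature contributes a large negative term, and checking that the Yamabe energy of a suitable bump function then becomes negative (a scaling estimate). Granting this, and the negative-case conformal deformation theorem — if $\lambda_{1}(\Box_{g})<0$ then every $S$ negative somewhere is a conformal scalar curvature in $[g]$, which I would establish by normalizing to $S_{g}\equiv-1$ and running a sub-/super-solution scheme together with a continuation argument (the delicate point being a supersolution when $S$ is positive somewhere, handled using the strictly negative background), concluding by monotone iteration of the type in Theorem~\ref{iteration:thm1} — it follows that on \emph{any} $M^{n}$, $n\ge3$, every $S$ negative somewhere is a scalar curvature. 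This settles the ``if'' direction of (iii), the $S<0$-somewhere part of (ii) and (i), the value $S\equiv0$ in (ii) (by definition of the class), and $S\equiv0$ in (i) as well, since a class-(i) manifold also carries a $\lambda_{1}<0$ class, hence by continuity of the Yamabe invariant along a path of conformal classes a $\lambda_{1}=0$ class, whose Yamabe metric is scalar-flat.

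\textbf{The main obstacle: (i) with $S\ge0$, $S\not\equiv0$.} What remains is to realize a nonnegative, not-identically-zero $S$ on a manifold with positive scalar curvature. Here a sub-/super-solution argument inside a fixed conformal class will \emph{not} close — on a positive-scalar-curvature background, constant barriers are ordered only when $S$ is proportional to $S_{g}$, and there are genuine conformal obstructions (Kazdan--Warner-type identities on the round $S^{n}$ exclude many positive $S$). The hard part will be to exploit the non-conformal freedom: starting from a metric with $S_{g}>0$ everywhere, I would perform a preliminary non-conformal deformation — localized near a point where $S>0$, grafting in a model piece with large, controlled positive scalar curvature (either by hand, or via surjectivity of the linearization $h\mapsto DS_{g}[h]$ away from exceptional metrics) — so as to reach a configuration in which the conformal equation does admit ordered sub- and super-solutions, and then patch and smooth. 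Controlling this gluing so that the resulting equation is solvable while no new obstruction is introduced is, I expect, the technical heart of the theorem.
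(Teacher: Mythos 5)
The statement you are proving is not proved in the paper: Theorem~\ref{scalar:thm0} is quoted verbatim as background from Kazdan--Warner \cite{KW3,KW4,KW}, so there is no in-paper argument to compare against. Your sketch, however, has a substantive gap that the paper itself flags. In the paragraph ``Functions that are negative somewhere'' you assert a ``negative-case conformal deformation theorem: if $\lambda_{1}(\Box_{g})<0$ then every $S$ negative somewhere is a conformal scalar curvature in $[g]$,'' and then use it, together with the universal existence of a $\lambda_{1}<0$ class, to dispatch all of (iii) and the nonpositive parts of (i)--(ii). That intermediate claim is false. When $\lambda_{1}<0$ and one normalizes to $S_{g}\equiv -1$, a sub-/super-solution scheme for $-a\Delta_{g}u - u = Su^{p-1}$ yields realizability only under much stronger sign conditions on $S$ (roughly, $S\leqslant 0$ not identically zero; the sign-changing case carries genuine obstructions, cf.\ Rauzy-type conditions). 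Indeed the paper explicitly records, in the discussion preceding Theorem~\ref{scalar:thm1}, that Kazdan--Warner \cite{KW2} require $S<0$ \emph{everywhere} for conformal realizability on a closed manifold with negative first eigenvalue. So the ``if'' directions cannot be closed by a single conformal deformation from a $\lambda_{1}<0$ class.

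What the Kazdan--Warner proof actually does — and what your sketch is missing in the negative and zero cases, not just in case (i) — is a sequence of \emph{non-conformal} moves: an approximation/squeeze lemma (a function $S$ can be realized once one has a metric $g_{1}$ whose scalar curvature $S_{1}$ is suitably sandwiched by $S$ after composing with a diffeomorphism), plus surjectivity of the linearization $h\mapsto DS_{g}[h]$ away from exceptional metrics to carry out a local deformation, plus diffeomorphism equivariance to relocate where the target function is negative. You invoke the linearization idea only for the $S\geqslant 0$ case in (i), but the same machinery is needed for sign-changing $S$ in cases (ii)--(iii) as well; the monotone-iteration/sub-super-solution technology from \S3 of this paper (Theorem~\ref{iteration:thm1}, Corollary~\ref{iteration:cor1}) is a Dirichlet-boundary tool that does not transfer to the closed-manifold conformal problem without those extra conditions. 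Your reduction step and the necessity ``only if'' arguments via the first eigenfunction are correct, and the identification of the three classes with $\operatorname{sgn}\lambda_{1}(\Box_{g})$ is the right starting point; the gap is purely in the claimed strength of the conformal sufficiency in the $\lambda_{1}\leqslant 0$ classes.
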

In this section, we show results similar to the ``Trichotomy Theorem" on $ (\bar{M}, g) $ by restricting on the conformal class $ [g] $. Note that the results of Kazdan and Warner may not be obtained under pointwise conformal change. In particular, we consider the prescribed scalar curvature problem under conformal deformation of a Riemannian metric on $ \bar{M} $: given a smooth function $ S \in \calC^{\infty}(\bar{M}) $ and a positive function $ \phi \in \calC^{\infty}(\partial M) $, under which conditions should $ S $ satisfy so that there exists a metric $ \tilde{g} $ conformal to $ g $ with scalar curvature $ S $ on $ \bar{M} $, meanwhile the induced metric on $ \partial M $ is determined by $ \phi $. It is equivalent to the existence of a positive smooth function $ u $ satisfying
\begin{equation}\label{scalar:eqn1}
\begin{split}
-a\Delta_{g} u + S_{g} u & = Su^{p-1} \; {\rm in} \; M; \\
u = \phi \; {\rm on} \; \partial M.
\end{split}
\end{equation}
For the rest of this section, we discuss the solvability of (\ref{scalar:eqn1}) by imposing conditions on $ S $. In \cite{KW2}, results for closed manifolds and open manifolds within a conformal class are given. On closed manifolds $ (M, g) $, the restrictions for the prescribed scalar curvature is more restrictive. For example, they show that for $ S \in \calC^{\infty}(M) $ to be a scalar curvature with respect to some metric under conformal change, they must assume that $ S $ is negative everywhere when the first eigenvalue $ \lambda_{1} $ of $ \Box_{g} $ on closed manifold $ (M, g) $ is negative; $ S $ should be either identically zero, or change sign with $ \int_{M} S \dvol < 0 $ when $ \lambda_{1} = 0 $; a more delicate obstruction was given in \cite{KW2} when the manifold is the $ n $-sphere with standard metric. They have very few results for prescribed scalar curvature function on closed manifolds with positive first eigenvalue of conformal Laplacian within the conformal class $ [g] $. However, the results here on compact manifolds $ (\bar{M}, g) $ with smooth boundary require loose restrictions than in closed manifolds, since the conformal deformation of a metric is not only reflected and solely determined by the scalar curvature, but also reflected by the boundary behavior. This is also probably due to the fact that the constant functions are not eigenfunctions of Laplace-Beltrami operator. 
\medskip

We will show that when $ \eta_{1} > 0 $, our restriction on the prescribed scalar curvature in the interior $ M $ outperform the corresponding case in ``Trichotomy Theorem", even with the restriction that the metric should be within a conformal class; when $ \eta_{1} = 0 $, our restriction on the prescribed scalar curvature within a conformal class is slightly weaker than the ``Trichotomy Theorem", but better than the corresponding results on closed manifolds with pointwise conformal change; it is more restrictive when we discuss the case $ \eta_{1} < 0 $. 
\medskip

As in the previous section, our results are classified by the sign of $ \eta_{1} $, the first eigenvalue of conformal Laplacian with Robin boundary condition.

We consider the case $ \eta_{1} < 0 $ first, which is historically the easy case. Without loss of generality, we may assume that $ h_{g} > 0 $ everywhere on $ \partial M $ due to Theorem \ref{pre:thm3}. Note also that the sign of $ \eta_{1} $ is unchanged under conformal change by Proposition \ref{dirichlet:prop1}. Due to the characterization of $ \eta_{1} $,
\begin{equation*}
\eta_{1} = \inf_{u \in H^{1}(M, g) } \frac{\int_{M} a \lvert \nabla_{g} u \rvert^{2}\dvol + \int_{M} S_{g} u^{2} \dvol + \int_{\partial M} \frac{2}{p-2} h_{g} u^{2} dS}{\int_{M} u^{2} \dvol}
\end{equation*}
we conclude that $ S_{g} < 0 $ somewhere, when $ h_{g} > 0 $ everywhere on $ \partial M $. Therefore, it is natural to consider the candidate of prescribed scalar curvature $ S \in \calC^{\infty}(\bar{M}) $ to be negative somewhere, provided that $ \eta_{1} < 0 $. We start with a simple case.
\begin{theorem}\label{scalar:thm1}
Let $ (\bar{M}, g) $ be a compact Riemannian manifold with smooth boundary $ \partial M $. Let $ \phi > 0 $ be a smooth function on $ \partial M $. Given a function $ S \in \calC^{\infty}(\bar{M}) $ that is negative everywhere on $ \bar{M} $. If $ \eta_{1} < 0 $, then there exists a metric $ \tilde{g} = u^{p-2} g $ conformal to $ g $ such that the positive function $ u \in \calC^{\infty}(M) $ and $ u = \phi $ on $ \partial M $.
\end{theorem}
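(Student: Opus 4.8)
The plan is to reduce everything to the monotone iteration scheme of Corollary \ref{iteration:cor1}: it suffices to produce, in the classical sense, a sub-solution $u_{-}$ and a super-solution $u_{+}$ of (\ref{scalar:eqn1}) with $0\leqslant u_{-}\leqslant u_{+}$ and $u_{-}\not\equiv 0$. The construction runs exactly parallel to the proof of Theorem \ref{dirichlet:thm2}, with the constant $\lambda<0$ there replaced by the function $S<0$ here; the negativity of $\eta_{1}$ gives the sub-solution, and the negativity of $S$ gives the super-solution.

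For the sub-solution I would use the Robin eigenfunction: since $\eta_{1}<0$, Proposition \ref{pre:prop3} provides a positive $\varphi\in\calC^{\infty}(\bar{M})$ with $-a\Delta_{g}\varphi+S_{g}\varphi=\eta_{1}\varphi$ in $M$ and $B_{g}\varphi=0$ on $\partial M$. Set $u_{-}=\delta\varphi$; the interior equation is scale-invariant, so $-a\Delta_{g}u_{-}+S_{g}u_{-}-Su_{-}^{p-1}=(\eta_{1}-Su_{-}^{p-2})\,u_{-}$, and I need $Su_{-}^{p-2}\geqslant\eta_{1}$ pointwise. Because $S<0$ and $\varphi$ is bounded, $\sup_{\bar{M}}|Su_{-}^{p-2}|\leqslant(\sup_{\bar{M}}|S|)(\delta\max_{\bar{M}}\varphi)^{p-2}\to 0$ as $\delta\to 0$, so this holds once $\delta$ is small; shrinking $\delta$ further to satisfy $\delta\leqslant(\min_{\partial M}\phi)/(\max_{\partial M}\varphi)$ gives $u_{-}\leqslant\phi$ on $\partial M$. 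Fixing such a $\delta$ yields a positive classical sub-solution.

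For the super-solution I would try a large constant $u_{+}\equiv C$, so that $-a\Delta_{g}u_{+}+S_{g}u_{+}-Su_{+}^{p-1}=(S_{g}-SC^{p-2})\,C$. Since $S<0$ everywhere and $S_{g}$ is bounded below on $\bar{M}$, one has $S_{g}-SC^{p-2}\geqslant\inf_{\bar{M}}S_{g}+(\inf_{\bar{M}}(-S))\,C^{p-2}\geqslant 0$ for $C$ large; enlarging $C$ also so that $C\geqslant\max_{\partial M}\phi$ and $C\geqslant\max_{\bar{M}}u_{-}$ makes $u_{+}$ a classical super-solution with $0<u_{-}\leqslant u_{+}$. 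Both $u_{\pm}$ lie in $\calC^{0}(\bar{M})\cap\calC^{\infty}(M)$ and $u_{-}\not\equiv 0$, so Corollary \ref{iteration:cor1} produces a positive $u\in\calC^{\infty}(M)\cap\calC^{1,\alpha}(\bar{M})$ solving (\ref{scalar:eqn1}); then $\tilde g=u^{p-2}g$ is conformal to $g$, has scalar curvature $S$ in $M$, and satisfies $u=\phi$ on $\partial M$, as required.

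I do not expect a genuine obstacle in this case — it is the historically easy sign — but the step that deserves care is the super-solution: the constant trick works precisely because $S<0$ \emph{everywhere}, so that $-SC^{p-2}\to+\infty$ dominates $S_{g}$; were $S$ merely negative somewhere, one would need a subtler barrier, which is exactly why the hypothesis here is $S<0$ on all of $\bar{M}$. The only other point requiring attention is the bookkeeping of choosing the single parameter $\delta$ small enough (resp. $C$ large enough) to meet both the pointwise interior inequality and the boundary/ordering constraints at once.
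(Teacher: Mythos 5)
Your proposal is correct and matches the paper's proof in all essentials: sub-solution from a small multiple $\delta\varphi$ of the positive Robin eigenfunction (possible because $\eta_{1}<0$ and $S<0$ make $Su_{-}^{p-2}\geqslant\eta_{1}$ for $\delta$ small), super-solution from a large constant $C$ (possible because $-S$ is bounded below by a positive constant on $\bar{M}$), and conclusion via Corollary \ref{iteration:cor1}. Your remark pinpointing that the constant super-solution is exactly where $S<0$ \emph{everywhere} is used, rather than merely $S<0$ somewhere, is also the correct diagnosis of the hypothesis.
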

\begin{proof}
Since $ \eta_{1} < 0 $, the eigenvalue problem
\begin{equation}\label{scalar:eqn2}
-a\Delta_{g} \varphi + S_{g} \varphi = \eta_{1} \varphi \; {\rm in} \; M, \frac{\partial \varphi}{\partial \nu} + \frac{2}{p-2} h_{g} \varphi = 0 \; {\rm on} \; \partial M
\end{equation}
admits a positive, smooth solution $ \varphi $ on $ \bar{M} $. Any positive scaling $ \delta \varphi $ also solves (\ref{scalar:eqn1}). Choosing $ \delta > 0 $ small enough so that
\begin{equation*}
\eta_{1} \min_{\bar{M}} \varphi \leqslant \delta^{p-2} \min_{\bar{M}} \left( S \varphi^{p-1} \right), \delta \max_{\partial M} \varphi \leqslant \min_{\partial M} \phi.
\end{equation*}
Fix this $ \delta > 0 $ and denote $ u_{5} = \delta \varphi $. It follows from the choice of $ \delta $ that
\begin{align*}
-a\Delta_{g} u_{5} + S_{g} u_{5} & = \eta_{1} \delta \varphi \leqslant \eta_{1} \delta \min_{\bar{M}} \varphi \leqslant  \delta^{p-1} \min_{\bar{M}} \left( S \varphi^{p-1} \right) \leqslant S u_{5}^{p-1} \; {\rm in} \; M; \\
u_{5} & = \delta \varphi \leqslant \delta \max_{\partial M} \varphi \leqslant \min_{\partial M} \phi \leqslant \phi \; {\rm on} \; \partial M.
\end{align*}
It follows that the positive function $ u_{5} \in \calC^{\infty}(M) \cap \calC^{0}(\bar{M}) $ is a sub-solution of (\ref{scalar:eqn1}) with the choice of $ S $ in the statement. Choose a constant $ C > 0 $ large enough so that
\begin{equation}\label{scalar:eqn3}
C \geqslant \max_{\bar{M}} u_{5}, C \geqslant \max_{\partial M} \phi, \min_{\bar{M}} S_{g} \geqslant \max_{S} C^{p-2}.
\end{equation}
This can be done since $ S < 0 $ everywhere by assumption. Set $ u_{5}' = C $. By (\ref{scalar:eqn3}) we have
\begin{align*}
-a\Delta_{g} u_{5}' + S_{g} u_{5}' & = S_{g} C \geqslant \max_{S} C^{p-1} \geqslant S \left( u_{5}' \right)^{p-1} \; {\rm in} \; M; \\
u_{5}' & = C \geqslant \max_{\partial M} \phi \geqslant \phi \; {\rm on} \; \partial M.
\end{align*}
It follows that $ u_{5}' \geqslant u_{5} > 0 $ is a super-solution of (\ref{scalar:eqn1}) with this $ S \in \calC^{\infty}(\bar{M}) $. Clearly constant function is smooth. By Corollary \ref{iteration:cor1}, we conclude that there exist a positive function $ u \in \calC^{\infty}(M) $ solves (\ref{scalar:eqn1}). In addition, $ 0 < u_{5} \leqslant u \leqslant u_{5}' $.
\end{proof}
\begin{remark}
Unfortunately, we are not able to show that any function that is negative somewhere can be realized as a prescribed scalar curvature function with some metric after pointwise conformal change. But the statement holds if we replace the pointwise conformal change by conformally equivalent metrics. The proof is fairly easy and we omit the details. Similar arguments follow in the cases of $ \eta_{1} = 0 $ and $ \eta_{1} > 0 $.
\end{remark}
\medskip

We now move to the case $ \eta_{1} = 0 $. A result of Kazdan and Warner \cite{KW2} shows that when the first eigenvalue of $ \Box_{g} $ on closed manifold is zero, necessary condition of a given function $ S $ to be a prescribed scalar curvature of some metric under conformal change is either $ S \equiv 0 $ or $ S $ must change sign. On compact manifolds with boundary, it is reasonable to conjecture a loose restriction on $ S $: the necessary condition of the existence of the solution of (\ref{scalar:eqn1}) with some function $ S \in \calC^{\infty}(\bar{M}) $ is either (i) $ S \equiv 0 $; or (ii) $ S $ must be negative somewhere. In particular, we can show below that $ S < 0 $ everywhere on $ \bar{M} $ can be realized as  a prescribed scalar curvature function in this case. Heuristically, we can see these as follows:

If $ S \equiv 0 $, then (\ref{scalar:eqn1}) admits a positive, smooth solution due to Theorem \ref{dirichlet:thm2}. When $ S \not\equiv 0 $, consider the characterization of the first eigenvalue of $ \Box_{g} $ with respect to Robin condition:
\begin{equation*}
0 = \eta_{1} = \inf_{u \in H^{1}(M, g) } \frac{\int_{M} a \lvert \nabla_{g} u \rvert^{2}\dvol + \int_{M} S_{g} u^{2} \dvol + \int_{\partial M} \frac{2}{p-2} h_{g} u^{2} dS}{\int_{M} u^{2} \dvol}
\end{equation*}
We may assume $ h_{g} > 0 $ everywhere on $ \partial M $, due to Theorem \ref{pre:thm3}. Thus if $ S_{g} > 0 $ everywhere, then the eigenfunction $ \varphi $ with respect to $ \eta_{1} = 0 $ must satisfy
\begin{equation*}
0 < \int_{M} a \lvert \nabla_{g} \varphi \rvert^{2}\dvol + \int_{\partial M} \frac{2}{p-2} h_{g} \varphi^{2} dS = \int_{M} -S_{g} \varphi^{2} \dvol < 0.
\end{equation*}
Contradiction. Our first result below amounts to the given function $ S \in \calC^{\infty}(\bar{M}) $ which changes sign, with some extra restriction on $ S $. Given any constant $ \gamma > 0 $, denote
\begin{equation}\label{scalar:eqn4}
U_{\gamma}(\partial M) = \lbrace x \in \bar{M} : \text{dist}(x, \partial M) < \gamma \rbrace.
\end{equation}
Let $ \bar{U}_{\gamma}(\partial M) $ be the closure of the set in (\ref{scalar:eqn4}).
\begin{theorem}\label{scalar:thm2}
Let $ (\bar{M}, g) $ be a compact Riemannian manifold with smooth boundary $ \partial M $. Let $ \phi > 0 $ be a smooth function on $ \partial M $. Given a function $ S \in \calC^{\infty}(\bar{M}) $ which is negative on $ \bar{U}_{\gamma}(\partial M) $ for some $ \gamma > 0 $. If $ \eta_{1} = 0 $, then there exists a metric $ \tilde{g} = u^{p-2} g $ conformal to $ g $ and a positive constant $ c > 0 $ such that the positive function $ u \in \calC^{\infty}(M) $ and $ u = c\phi $ on $ \partial M $.
\end{theorem}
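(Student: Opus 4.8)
The plan is to solve the Dirichlet problem (\ref{scalar:eqn1}), with the boundary value $c\phi$ in place of $\phi$, for a sufficiently small constant $c>0$, by exhibiting a sub-solution below a super-solution and invoking the monotone iteration scheme of Corollary~\ref{iteration:cor1}; the resulting positive $u\in\calC^{\infty}(M)\cap\calC^{1,\alpha}(\bar{M})$ then gives the conformal metric $\tilde{g}=u^{p-2}g$ with scalar curvature $S$ in $M$ and $u=c\phi$ on $\partial M$, which is the assertion. Since $\eta_{1}=0$, Proposition~\ref{dirichlet:prop3} gives $\tilde{\eta}_{1}>0$, so $-a\Delta_{g}+S_{g}$ with homogeneous Dirichlet condition is invertible (Fredholm dichotomy) with bounded inverse $\calL^{q}(M,g)\to W^{2,q}(M,g)$, to which the estimate of Theorem~\ref{pre:thm1}(ii) and the Sobolev embedding of Proposition~\ref{pre:prop5} apply; moreover, Theorem~\ref{dirichlet:thm1} (with $\lambda=0$) furnishes the unique solution $w\in\calC^{\infty}(M)\cap\calC^{1,\alpha}(\bar{M})$ of $-a\Delta_{g}w+S_{g}w=0$ in $M$, $w=\phi$ on $\partial M$, which satisfies $w>0$ on $\bar{M}$, hence $0<\min_{\bar{M}}w\leqslant\max_{\bar{M}}w<\infty$.

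For the super-solution I take $u_{+}:=cw+v_{c}$, where $v_{c}$ solves $-a\Delta_{g}v_{c}+S_{g}v_{c}=\big(\max_{\bar{M}}\lvert S\rvert\big)\big(2c\max_{\bar{M}}w\big)^{p-1}$ in $M$, $v_{c}=0$ on $\partial M$. The right-hand side is a constant proportional to $c^{p-1}$, so $\lVert v_{c}\rVert_{\calC^{0}(\bar{M})}\leqslant C_{0}c^{p-1}$ with $C_{0}$ independent of $c$; since $p>2$, for all small $c$ this gives $0<u_{+}\leqslant 2c\max_{\bar{M}}w$ on $\bar{M}$, $u_{+}=c\phi$ on $\partial M$, and
\[
-a\Delta_{g}u_{+}+S_{g}u_{+}-Su_{+}^{p-1}=\big(\max_{\bar{M}}\lvert S\rvert\big)\big(2c\max_{\bar{M}}w\big)^{p-1}-Su_{+}^{p-1}\geqslant 0\quad\text{in}\ M ,
\]
so $u_{+}$ is a classical super-solution of (\ref{scalar:eqn1}) for the datum $c\phi$.

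The sub-solution is the delicate point: where $S>0$ the equation is of ``positive type'', so small multiples of $w$ or of the Robin/Dirichlet eigenfunctions are there super- rather than sub-solutions. The remedy is to first produce a strictly negative sub-solution of the conformal Laplacian. Let $\theta_{0}\in\calC^{\infty}(M)\cap\calC^{1,\alpha}(\bar{M})$ solve $-a\Delta_{g}\theta_{0}+S_{g}\theta_{0}=1$ in $M$, $\theta_{0}=0$ on $\partial M$, and put $\Psi:=w-\epsilon_{0}\theta_{0}$; for $\epsilon_{0}>0$ small, $\Psi>0$ on $\bar{M}$, $\Psi=\phi$ on $\partial M$, and $-a\Delta_{g}\Psi+S_{g}\Psi=-\epsilon_{0}<0$ on $\bar{M}$. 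Then $u_{-}:=\epsilon\Psi$ satisfies
\[
-a\Delta_{g}u_{-}+S_{g}u_{-}-Su_{-}^{p-1}=\epsilon\big(-\epsilon_{0}-S\epsilon^{p-2}\Psi^{p-1}\big)\leqslant 0\quad\text{in}\ M
\]
as soon as $\epsilon^{p-2}\leqslant\epsilon_{0}\big(\max_{\bar{M}}\lvert S\rvert\big)^{-1}\big(\max_{\bar{M}}\Psi\big)^{-(p-1)}$, which is where the superlinearity $p-2>0$ enters; the hypothesis that $S<0$ on the collar $\bar{U}_{\gamma}(\partial M)$ is, in this argument, the natural compatibility condition for the case $\eta_{1}=0$ (compare the variational characterization of $\eta_{1}$ with $h_{g}>0$ used in the remarks preceding the statement). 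After fixing $c$ small as above, choosing $\epsilon>0$ small enough that in addition $\epsilon\max_{\bar{M}}\Psi\leqslant\tfrac{c}{2}\min_{\bar{M}}w$ (whence $\epsilon\leqslant c$) gives $0<u_{-}\leqslant u_{+}$ on $\bar{M}$, $u_{-}\not\equiv 0$, and $u_{-}=\epsilon\phi\leqslant c\phi$ on $\partial M$.

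All hypotheses of Corollary~\ref{iteration:cor1} being met, it produces a positive $u\in\calC^{\infty}(M)\cap\calC^{1,\alpha}(\bar{M})$ with $u_{-}\leqslant u\leqslant u_{+}$ solving (\ref{scalar:eqn1}) for the boundary value $c\phi$, which completes the proof. I expect the construction of the strict sub-solution $\Psi$—available precisely because $\tilde{\eta}_{1}>0$—to be the main obstacle; the rest is routine, the one thing to watch being that the smallness thresholds for $\epsilon_{0}$, $c$ and $\epsilon$ are chosen in that order, each depending only on $(\bar{M},g)$, $S$ and $\phi$.
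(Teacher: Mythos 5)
Your proof is correct, but it takes a genuinely different route from the paper, and in fact proves more. The paper first passes (via Theorem~\ref{dirichlet:thm1}) to a conformal metric $g_{1}$ with vanishing interior scalar curvature and identity boundary value, and then builds a super-solution of the form $\tilde\varphi + K$, where $\tilde\varphi$ is the first Dirichlet eigenfunction of $\Box_{g_{1}}$; since $\tilde\varphi$ vanishes on $\partial M$, the good term $\tilde\eta_{1}\tilde\varphi$ decays near the boundary and can only dominate $\beta S u_{6}^{p-1}$ there if $S<0$ on a collar --- this is precisely where the hypothesis ``$S<0$ on $\bar U_{\gamma}(\partial M)$'' is used. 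You instead work directly in $g$: your super-solution $u_{+}=cw+v_{c}$ has $\Box_{g}u_{+}$ equal to a positive \emph{constant} proportional to $c^{p-1}$, which uniformly dominates $\lvert S\rvert u_{+}^{p-1}$ on all of $\bar M$ (including near $\partial M$), and your sub-solution $\epsilon(w-\epsilon_{0}\theta_{0})$ uses a perturbation $\theta_{0}$ with $\Box_{g}\theta_{0}=1$, $\theta_{0}=0$ on $\partial M$, to make $\Box_{g}\Psi$ strictly negative; the superlinearity $p-2>0$ then closes both inequalities for small $\epsilon$, $c$. The decisive structural ingredients are the same as in the paper ($\tilde\eta_{1}>0$ giving invertibility of the Dirichlet problem, the positive solution $w$ of $\Box_{g}w=0$, the free rescaling constant $c$ absorbing the amplitude of $S$), but your construction avoids the auxiliary conformal change $g\mapsto g_{1}$, and --- more importantly --- it nowhere uses the collar negativity of $S$. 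As you observe, that hypothesis plays no role in your argument; consequently you have actually established the stronger statement that, when $\eta_{1}=0$, \emph{every} $S\in\calC^{\infty}(\bar M)$ arises as a prescribed scalar curvature under pointwise conformal change with Dirichlet boundary value $c\phi$, paralleling the unconditional conclusion the paper records in the $\eta_{1}>0$ case. This is worth flagging explicitly rather than describing the collar condition as merely a ``natural compatibility condition'': in your proof it is simply unnecessary.
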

\begin{proof} Pick up a function $ \Phi' \equiv 1 $ on $ \bar{M} $. Denote $ \phi' = \Phi' \bigg |_{\partial M} = 1 $ on $ \partial M $. Since $ \eta_{1} = 0 $, we conclude by Theorem \ref{dirichlet:thm1} that there exist a metric $ g_{1} = v^{p-2} g $, where $ v $ is positive and smooth in $ M $, such that $ S_{g_{1}} = 0 $ with $ v = \phi' \equiv 1 $ on $ \partial M $. Clearly, the first eigenvalue of $ \Box_{g_{1}} $ is also zero, due to Proposition \ref{dirichlet:prop1}. Due to Proposition \ref{dirichlet:prop3}, the first eigenvalue of $ \Box_{g_{1}} $ with respect to Dirichlet condition, still denoted by $ \tilde{\eta}_{1} $, is positive. We have
\begin{equation}\label{scalar:eqn5}
-a\Delta_{g_{1}} \tilde{\varphi} = \tilde{\eta}_{1} \tilde{\varphi} \; {\rm in} \; M, \tilde{\varphi} = 0 \; {\rm on} \; \partial M.
\end{equation}
Fix a constant $ K $ which satisfies
\begin{equation*}
K \geqslant \max_{\partial M} \phi.
\end{equation*}
Note that the set $ U_{1}: = M \backslash U_{\gamma}(\partial M) $ is a closed set. Define
\begin{equation}\label{scalar:eqn6}
u_{6} : = \tilde{\varphi} + K, K_{1} = \min_{U_{1}} \tilde{\varphi}
\end{equation}
There exists a constant $ \beta > 0 $, small enough, such that
\begin{equation}\label{scalar:eqn7}
\tilde{\eta}_{1} K_{1} \geqslant \beta \cdot \max_{\bar{M}} S \max_{\bar{M}} u_{6}^{p-1}.
\end{equation}
It follows $ u_{6} $ satisfies
\begin{align*}
\tilde{\eta}_{1} \tilde{\varphi} & \geqslant \tilde{\eta}_{1} K_{1} \geqslant \beta \cdot \max_{\bar{M}} S \max_{\bar{M}} u_{6}^{p-1} \geqslant \beta S u_{6}^{p-1} \; {\rm in} \; U_{1}; \\
\tilde{\eta}_{1} \tilde{\varphi} & \geqslant \beta S u_{6}^{p-1} \; {\rm in} \bar{U}_{\gamma}(\partial M); \\
\Rightarrow \tilde{\eta}_{1} \tilde{\varphi} & \geqslant  \beta S u_{6}^{p-1} \; {\rm in} \; M.
\end{align*}
The first inequality above is due to (\ref{scalar:eqn7}); the second inequality above is due to the assumption that $ S < 0 $ on $ \bar{U}_{\gamma}(\partial M) $. Note that the inequality on last line above also holds for all smaller $ \beta > 0 $. Thus by (\ref{scalar:eqn5}), we have
\begin{align*}
-a\Delta_{g_{1}} u_{6} & = -a\Delta_{g_{1}} \tilde{\varphi} = \tilde{\eta}_{1} \tilde{\varphi} \geqslant  \beta S u_{6}^{p-1} \geqslant \beta' S u_{6}^{p-1} \; {\rm in} \; M; \\
u_{6} & = K \geqslant \max_{\partial M} \phi \geqslant \phi \geqslant \phi \; {\rm on} \; \partial M
\end{align*}
holds for all $ \beta' \leqslant \beta $. Clearly $ u_{6} > 0 $ on $ \bar{M} $ and $ u_{6} \in \calC^{\infty}(M) \cap \calC^{0}(\bar{M}) $. Thus $ u_{6} $ is a super-solution of (\ref{scalar:eqn1}) with $ \beta S $ on right side, with some scale $ \beta > 0 $. To construct the sub-solution, we choose any constant $ C > 0 $ and consider the PDE
\begin{equation}\label{scalar:eqn8}
-a\Delta_{g_{1}} u + Cu = 0 \; {\rm in} \; M, u = \phi \: {\rm on} \; \partial M.
\end{equation}
By the same argument in previous results, (\ref{scalar:eqn8}) admits a positive, smooth solution $ u $ on $ \bar{M} $. We choose a constant $ 0 <  \delta < 1 $ small enough so that
\begin{equation*}
\delta \max_{\bar{M}} u \leqslant K.
\end{equation*}
Denote $ u_{6}' : = \delta u $. It follows that when $ \beta > 0 $ is small enough, 
\begin{align*}
-a\Delta_{g_{1}} u_{6}' & = -Cu_{6}' \leqslant \beta \min_{\bar{M}} S \left(u_{6}'\right)^{p-1} \; {\rm in} \; M; \\
u_{6}' & = \delta \phi \leqslant \phi \; {\rm on} \; \partial M.
\end{align*}
Now fix $ \beta > 0 $ so that both inequalities for $ u_{6} $ and $ u_{6}' $ are satisfied, respectively. It follows that $ u_{6}' \in \calC^{\infty}(M) \cap \calC^{0}(\bar{M}) $ is a sub-solution of (\ref{scalar:eqn1}) with $ \beta S $. In addition, $ 0 < u_{6}' \leqslant u_{6} $. Due to Corollary \ref{iteration:cor1}, we conclude that there exists a positive, smooth function $ w $ satisfies
\begin{equation*}
-a\Delta_{g_{1}} w = \beta S w^{p-1} \; {\rm in} \; M, w = \phi \; {\rm on} \; \partial M, 0 < u_{6}' \leqslant w \leqslant u_{6}.
\end{equation*}
It is equivalent to say that there exists a metric $ g_{2} = w^{p-2} g_{1} $ with scalar curvature $ \beta S $ in $ M $, and $ w = \phi $ on $ \partial M $. Since $ g_{1} = v^{p-2} g $, we denote
\begin{equation*}
u = (wv)^{p-2} \; {\rm on} \; \bar{M}.
\end{equation*}
It follows that $ g_{2} = u^{p-2} g $ with scalar curvature $ \beta S $ and $ u = \phi \cdot 1 = \phi $ on $ \partial M $, i.e. $ u $ solves
\begin{equation*}
-a\Delta_{g} u + S_{g} u = \beta S u^{p-1} \; {\rm in} \; M, u = \phi \; {\rm on} \; \partial M.
\end{equation*}
Let $ c > 0 $ be a constant with
\begin{equation*}
c^{p-2} = \beta.
\end{equation*}
Define $ \tilde{u} = cu $, it follows by a direct computation that
\begin{equation*}
-a\Delta_{g} \tilde{u} + S_{g} \tilde{u} = S \tilde{u}^{p-1} \; {\rm in} \; M, \tilde{u} = c\phi \; {\rm on} \; \partial M.
\end{equation*}
Denote $ \tilde{g} = \tilde{u}^{p-2} g $, which is a scaling of $ g_{2} $, it follows that $ \tilde{g} $ has scalar curvature $ S $ in $ M $. The positive, smooth function $ \tilde{u} $ is the desired function.
\end{proof}
\medskip

Next result concerns the smooth function $ S \in \calC^{\infty}(\bar{M}) $ that is negative everywhere. This is easier than the case in Theorem \ref{scalar:thm2}.
\begin{corollary}\label{scalar:cor1}
Let $ (\bar{M}, g) $ be a compact Riemannian manifold with smooth boundary $ \partial M $. Let $ \phi > 0 $ be a smooth function on $ \partial M $. Given a function $ S \in \calC^{\infty}(\bar{M}) $ which is negative everywhere. If $ \eta_{1} = 0 $, then there exists a metric $ \tilde{g} = u^{p-2} g $ conformal to $ g $ such that the positive function $ u \in \calC^{\infty}(M) $ and $ u = \phi $ on $ \partial M $.
\end{corollary}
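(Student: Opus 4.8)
The plan is to exhibit a sub-solution and a super-solution of (\ref{scalar:eqn1}) and then invoke the monotone iteration scheme, Corollary \ref{iteration:cor1}. Since $ S < 0 $ on all of $ \bar{M} $ both constructions are elementary, and the argument runs parallel to that of Theorem \ref{scalar:thm1}; the only genuinely new point is that the sub-solution can no longer be taken to be a small multiple of the Robin eigenfunction, a choice that breaks down once $ \eta_{1} = 0 $.

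\emph{Super-solution.} Since $ \eta_{1} = 0 $, Proposition \ref{pre:prop3} provides a smooth $ \varphi > 0 $ on $ \bar{M} $ with $ \Box_{g} \varphi = -a\Delta_{g} \varphi + S_{g} \varphi = 0 $ in $ M $. Put $ u_{+} := \delta \varphi $ with $ \delta > 0 $ so large that $ \delta \min_{\bar{M}} \varphi \geqslant \max_{\partial M} \phi $. Then $ \Box_{g} u_{+} = 0 \geqslant S u_{+}^{p-1} $ since $ S < 0 $ and $ u_{+} > 0 $, and $ u_{+} \geqslant \phi $ on $ \partial M $, so $ u_{+} $ is a super-solution.

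\emph{Sub-solution.} As in the proof of Theorem \ref{dirichlet:thm1}, for a constant $ C > 0 $ the linear Dirichlet problem $ -a\Delta_{g} u + Cu = 0 $ in $ M $, $ u = \phi $ on $ \partial M $, admits a solution $ u_{-} \in \calC^{\infty}(M) \cap \calC^{0}(\bar{M}) $; the strong maximum principle (Proposition \ref{pre:prop2}) gives $ u_{-} > 0 $ on $ \bar{M} $, and comparing $ u_{-} $ with the constant $ \max_{\partial M} \phi $ gives $ u_{-} \leqslant \max_{\partial M} \phi $ on $ \bar{M} $---a bound that does not involve $ C $. Hence $ S u_{-}^{p-2} \geqslant (\min_{\bar{M}} S)(\max_{\partial M} \phi)^{p-2} $ on $ \bar{M} $, so if we fix $ C $ large enough that $ C \geqslant \max_{\bar{M}} S_{g} + \lvert \min_{\bar{M}} S \rvert (\max_{\partial M} \phi)^{p-2} $, then $ S_{g} - C \leqslant S u_{-}^{p-2} $ and therefore $ \Box_{g} u_{-} = (S_{g} - C) u_{-} \leqslant S u_{-}^{p-1} $ in $ M $, while $ u_{-} = \phi $ on $ \partial M $. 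Thus $ u_{-} $ is a sub-solution; enlarging $ \delta $ further if needed so that $ \delta \min_{\bar{M}} \varphi \geqslant \max_{\partial M} \phi \geqslant u_{-} $ on $ \bar{M} $, we obtain $ 0 \leqslant u_{-} \leqslant u_{+} $ with $ u_{-} \not\equiv 0 $.

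\emph{Conclusion.} Corollary \ref{iteration:cor1}, applied with the given $ S $ and the ordered pair $ u_{-} \leqslant u_{+} $, produces a positive $ u \in \calC^{\infty}(M) \cap \calC^{1, \alpha}(\bar{M}) $ solving (\ref{scalar:eqn1}); then $ \tilde{g} = u^{p-2} g $ is conformal to $ g $, has scalar curvature $ S $ in $ M $, and $ u = \phi $ on $ \partial M $, which is the assertion. The step I expect to require the most care is keeping the choice of $ C $ in the sub-solution non-circular: this succeeds precisely because the upper bound $ u_{-} \leqslant \max_{\partial M} \phi $ is independent of $ C $ (only the interior lower bound for $ u_{-} $, which plays no role here, deteriorates as $ C \to \infty $), and it is the hypothesis $ S < 0 $ everywhere that renders both $ \Box_{g} u_{+} \geqslant S u_{+}^{p-1} $ and the lower bound on $ S u_{-}^{p-2} $ automatic. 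One may instead first pass, via Theorem \ref{dirichlet:thm1}, to a conformal metric with vanishing scalar curvature---as in the proof of Theorem \ref{scalar:thm2}---which makes the choice of $ C $ slightly cleaner at the cost of an extra reduction step.
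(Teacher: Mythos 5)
Your proof is correct, and it follows the same two building blocks as the paper (Robin eigenfunction giving the super-solution, linear Dirichlet problem with a large zeroth-order constant giving the sub-solution, then Corollary \ref{iteration:cor1}). The super-solution is identical to the paper's $u_{7}' = \delta\varphi$. Where you depart is the sub-solution: the paper simply says ``the choice of sub-solution and the rest of the argument are exactly the same as in Theorem \ref{scalar:thm2},'' which, read literally, would first pass to the intermediate metric $g_{1} = v^{p-2}g$ with $S_{g_{1}} = 0$, solve $-a\Delta_{g_{1}}u + Cu = 0$ there, scale by a small $\delta$, and carry the auxiliary factor $\beta$ (forcing a final rescaling by $c$). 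You instead work directly in $g$: you take the solution of $-a\Delta_{g}u + Cu = 0$, $u = \phi$, with no scaling, and pin down the sub-solution inequality by the single observation that $u_{-} \leqslant \max_{\partial M}\phi$ is independent of $C$, so $C$ can be chosen after the fact large enough that $S_{g} - C \leqslant S u_{-}^{p-2}$. This is cleaner and, importantly, it produces $u = \phi$ on $\partial M$ exactly as the corollary states, without routing through the $u = c\phi$ conclusion of Theorem \ref{scalar:thm2}. Your version is self-contained where the paper relies on an imprecise cross-reference; the trade-off is that you must verify the $C$-independence of the upper bound on $u_{-}$, which you correctly reduce to the strong maximum principle applied to $u_{-} - \max_{\partial M}\phi$. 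Both routes are valid, but yours better matches the stated conclusion.
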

\begin{proof} Since $ \eta_{1} $, the eigenfunction $ \varphi > 0 $ on $ \bar{M} $ satisfies
\begin{equation}\label{scalar:eqn9}
-a\Delta_{g} \varphi + S_{g} \varphi = 0 \; {\rm in} \; M, \frac{\partial \varphi}{\partial \nu} + \frac{2}{p-2} h_{g} \varphi = 0 \; {\rm on} \; \partial M.
\end{equation}
Any scale $ \delta \varphi $ also solves (\ref{scalar:eqn9}), thus we choose $ \delta > 0 $ so that
\begin{equation*}
\delta \min_{\partial M} \varphi \geqslant \max_{\partial M} \phi.
\end{equation*}
Fix this $ \delta > 0 $ and denote $ u_{7}' = \delta \varphi $, we can see that
\begin{equation*}
-a\Delta_{g} u_{7}' + S_{g} u_{7}' \geqslant S \left( u_{7}' \right)^{p-1} \; {\rm in} \; M, u_{7}' \geqslant \phi \; {\rm on} \; M.
\end{equation*}
Hence the positive function $ u_{7}' \in \calC^{\infty}(M) \cap \calC^{0}(\bar{M}) $ is a super-solution of (\ref{scalar:eqn1}) with the given negative function $ S $. The choice of sub-solution and the rest of the argument are exactly the same as in Theorem \ref{scalar:thm2}.
\end{proof}
\begin{remark}\label{scalar:re1}
Combining results of Theorem \ref{dirichlet:thm1}, Theorem \ref{scalar:thm2} and Corollary \ref{scalar:cor1}, we conclude that either $ S \equiv 0 $, $ S $ negative on $ \bar{M} $ or $ S $ negative near the boundary of $ \bar{M} $ can be a prescribed scalar curvature of some metric under conformal change.
\end{remark}
\medskip

We now discuss the case $ \eta_{1} > 0 $. Analogously, when $ (M, g) $ is a closed manifold with $ \lambda_{1} > 0 $, there is a topological obstruction on closed spin manifolds with even dimensions, which says that the manifolds that admit positive scalar curvatures must have zero $ \hat{A} $-genus, see e.g. \cite{Gromov}. In \cite{KW2}, the obstruction on $ n $-sphere with standard metric was given, and Kazdan and Warner stated that this should be the only obstruction on manifolds with positive scalar curvature. Some recent argument with respect to prescribing Morse scalar curvature was given, see \cite{MaMa}. Other than these results, very little is known. This is probably because of the fact that the evolution of conformal deformation is solely reflected and determined by the scalar curvature. On compact manifolds $ (\bar{M}, g) $ with smooth boundary, we show below that any function $ S \in \calC^{\infty}(\bar{M}) $ can be a scalar curvature of some metric under conformal change when $ \eta_{1} > 0 $.

The first result concerns the case $ S > 0 $ everywhere on $ \bar{M} $ provided that $ \eta_{1} > 0 $.
\begin{theorem}\label{scalar:thm3}
Let $ (\bar{M}, g) $ be a compact Riemannian manifold with smooth boundary $ \partial M $. Let $ \phi > 0 $ be a smooth function on $ \partial M $. Given a function $ S \in \calC^{\infty}(\bar{M}) $ which is positive everywhere. If $ \eta_{1} > 0 $, then there exists a metric $ \tilde{g} = u^{p-2} g $ conformal to $ g $ and a positive constant $ c $ such that the positive function $ u \in \calC^{\infty}(M) $ and $ u = c\phi $ on $ \partial M $.
\end{theorem}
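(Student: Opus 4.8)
The plan is to reduce, exactly as in the proof of Theorem \ref{scalar:thm2}, to a Dirichlet problem on a conformally rescaled background metric for which \emph{constant} super-solutions are available, and then to recover the curvature $S$ itself by a final scaling of the solution — which is precisely where the auxiliary constant $c$ in the statement enters.

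First, since $\eta_{1} > 0$, I would invoke Theorem \ref{dirichlet:thm3} with boundary data $\phi' \equiv 1$ to obtain a conformal metric $g_{1} = v^{p-2} g$, with $v > 0$ smooth in $M$ and $v \equiv 1$ on $\partial M$, whose scalar curvature $S_{g_{1}} = \lambda$ is a positive constant (equivalently one may use the minimal-boundary solution of Theorem \ref{pre:thm2}(iii)). Then (\ref{scalar:eqn1}) for $g$ with boundary data $c\phi$ is obtained from the rescaled problem
\begin{equation*}
-a\Delta_{g_{1}} w + \lambda w = \beta S\, w^{p-1} \ {\rm in}\ M, \qquad w = \phi \ {\rm on}\ \partial M
\end{equation*}
for a sufficiently small constant $\beta > 0$: a positive solution $w$ of this problem gives $g_{2} = w^{p-2} g_{1} = (wv)^{p-2} g$ with $S_{g_{2}} = \beta S$, so $u_{\ast} := wv$ solves $\Box_{g} u_{\ast} = \beta S\, u_{\ast}^{p-1}$ with $u_{\ast} = \phi$ on $\partial M$; setting $c^{p-2} = \beta$ and $\tilde{u} = c\, u_{\ast}$, a direct computation ($\Box_{g}\tilde{u} = c\beta S u_{\ast}^{p-1} = \beta c^{2-p} S\,\tilde{u}^{p-1} = S\,\tilde{u}^{p-1}$) turns the coefficient $\beta S$ into $S$ while changing the boundary value to $c\phi$. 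Thus everything reduces to solving the rescaled problem via Corollary \ref{iteration:cor1}.

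The sub/super-solution pair for the rescaled problem on $(\bar{M}, g_{1})$ is now elementary, precisely because $S_{g_{1}} = \lambda > 0$ is a positive constant and $S > 0$ everywhere. For the super-solution I would fix a constant $C \geqslant \max_{\partial M}\phi$ and take $u_{+} \equiv C$; then $-a\Delta_{g_{1}} C + \lambda C = \lambda C$, so $u_{+}$ is a super-solution as soon as $\lambda C \geqslant \beta S C^{p-1}$, i.e. $\beta \leqslant \lambda / (C^{p-2}\max_{\bar{M}} S)$, a genuine positive upper bound on $\beta$. For the sub-solution I would solve the auxiliary linear equation $-a\Delta_{g_{1}} u_{0} + C' u_{0} = 0$ in $M$, $u_{0} = \phi$ on $\partial M$, with a constant $C' \geqslant \lambda > 0$; the strong maximum principle (Proposition \ref{pre:prop2}) gives $u_{0} \geqslant \min_{\partial M}\phi > 0$, and for $u_{-} := \delta u_{0}$ with $0 < \delta \leqslant 1$ we get $-a\Delta_{g_{1}} u_{-} + \lambda u_{-} = (\lambda - C')\delta u_{0} \leqslant 0 \leqslant \beta S u_{-}^{p-1}$ (using $C' \geqslant \lambda$ and $S > 0$) together with $u_{-} = \delta\phi \leqslant \phi$ on $\partial M$. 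Choosing $\delta$ small enough that $\delta\max_{\bar{M}} u_{0} \leqslant C$ secures $0 < u_{-} \leqslant u_{+}$, after which I fix $\beta$ in the admissible range. Corollary \ref{iteration:cor1} then yields a positive $w \in \calC^{\infty}(M) \cap \calC^{1,\alpha}(\bar{M})$ with $u_{-} \leqslant w \leqslant u_{+}$ solving the rescaled problem, and undoing the two conformal changes and the scaling as above completes the argument.

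The main obstacle — and the reason the conclusion is stated with the constant $c$ rather than simply "$u = \phi$ on $\partial M$" — is the superlinear, sign-definite right-hand side $S u^{p-1}$ with $S > 0$: a large constant cannot be a super-solution of the unscaled equation (one would need $S_{g_{1}} \geqslant S C^{p-2}$, impossible for $C$ large), so one is forced to shrink the curvature to $\beta S$ and then scale back, the scaling by $c$ with $c^{p-2} = \beta$ exactly compensating. A secondary point is that the Dirichlet eigenfunction $\tilde{\varphi}$ is unsuitable as a sub-solution here, since it vanishes on $\partial M$, which makes the boundary inequality delicate and kills the $u^{p-1}$ term near $\partial M$; replacing it by the positive solution of the zeroth-order-shifted linear problem above avoids this. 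Finally, the passage to a background metric $g_{1}$ with positive constant scalar curvature is where the hypothesis $\eta_{1} > 0$ is used essentially, through Theorem \ref{dirichlet:thm3} (or Theorem \ref{pre:thm2}(iii)).
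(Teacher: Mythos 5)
Your proof is correct, but it follows a genuinely different route from the paper's. You mirror the proof of Theorem \ref{scalar:thm2}: first pass to a conformal background $g_{1}=v^{p-2}g$ with constant positive scalar curvature $\lambda$ via Theorem \ref{dirichlet:thm3} (with boundary data $\equiv 1$), solve a shrunken Dirichlet problem $\Box_{g_{1}}w = \beta S\,w^{p-1}$, $w=\phi$ on $\partial M$, with a constant super-solution and a scaled solution of a zeroth-order-shifted linear problem as sub-solution, and finally rescale by $c=\beta^{1/(p-2)}$ to recover the target curvature $S$. The paper's proof of Theorem \ref{scalar:thm3} is more direct: it works with the original metric $g$ and builds the super-solution straight from the Robin eigenfunction $\varphi$, scaled by a small $\delta_{1}$ so that $\eta_{1}\min\varphi \geqslant \delta_{1}^{p-2}\max S\max\varphi^{p-1}$; the constant $c$ then appears because $\delta_{1}\varphi|_{\partial M}$ may be small, forcing the boundary data to be shrunk to $c\phi$ so that $\delta_{1}\varphi\geqslant c\phi$ on $\partial M$. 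Both constructions then use the same sub-solution idea (a small multiple of the positive solution of $-a\Delta u + Cu = 0$, $u=\phi$ or $c\phi$ on $\partial M$) and invoke Corollary \ref{iteration:cor1}. The paper's route avoids the extra preliminary appeal to Theorem \ref{dirichlet:thm3} and the conformal-covariance bookkeeping, so it is self-contained at this point in the text; your route makes the origin of the constant $c$ more transparent (it is exactly the curvature-shrinking factor $\beta$ raised to $1/(p-2)$) and gives a uniform treatment with the $\eta_{1}=0$ case. Two minor remarks: (a) the claim that the maximum principle gives $u_{0}\geqslant\min_{\partial M}\phi$ is a bit more than the equation $-a\Delta_{g_1}u_{0}+C'u_{0}=0$ actually yields, but the strict positivity $u_{0}>0$ on $\bar{M}$ that you really need does hold and is all that is used; and (b) your concluding paragraph motivating why a constant cannot be a super-solution of the \emph{unscaled} equation and why the Dirichlet eigenfunction is unsuitable as a sub-solution is a helpful heuristic, though not logically required, and it does not describe what the paper actually tries in its own proof.
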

\begin{proof} Since $ \eta_{1} >0 $, the corresponding eigenfunction $ \varphi > 0 $ solves
\begin{equation}\label{scalar:eqn10}
-a\Delta_{g} \varphi + S_{g}\varphi = \eta_{1}\varphi \; {\rm in} \; M, \frac{\partial \varphi}{\partial \nu} + \frac{2}{p-2} h_{g} \varphi = 0 \; {\rm on} \; \partial M.
\end{equation}
Scaling $ \varphi \mapsto \delta_{1} \varphi $ with an appropriate choice of $ \delta_{1} > 0 $ small enough such that
\begin{equation*}
\eta_{1} \min_{\bar{M}} \varphi \geqslant \delta_{1}^{p-2} \max_{\bar{M}} S \max_{\bar{M}} \varphi^{p-1}.
\end{equation*}
Fix this $ \delta_{1} > 0 $. Denote $ u_{8}' : = \delta_{1} \varphi $. It follows that there exists some small enough constant $ c > 0 $ such that
\begin{align*}
-a\Delta_{g} u_{8}' + S_{g} u_{8}' & = \delta_{1} \eta_{1} \varphi \geqslant \delta_{1} \eta_{1} \min_{\bar{M}} \varphi \geqslant \delta_{1}^{p-1} \max_{\bar{M}} S \max_{\bar{M}} \varphi^{p-1} \geqslant S \left( u_{8}' \right)^{p-1} \; {\rm in} \; M; \\
u_{8}' & \geqslant \delta_{1} \min_{\partial M} \varphi \geqslant c \max_{\partial M} \phi \geqslant c \phi \; {\rm on} \; \partial M.
\end{align*}
Fix this constant $ c $. The positive function $ u_{8}' $ is a super-solution of (\ref{scalar:eqn1}) with boundary condition $ c\phi $ on $ \partial M $. For sub-solution, we choose $ C > 0 $ such that
\begin{equation*}
C \geqslant \max_{\bar{M}} S_{g}.
\end{equation*}
We see from above that there exists a positive function $ u \in \calC^{\infty}(M) \cap \calC^{0}(\bar{M}) $ solves
\begin{equation}\label{scalar:eqn11}
-a\Delta_{g} u + Cu = 0 \; {\rm in} \; M, u = c\phi \; {\rm on} \; \partial M.
\end{equation}
Denote $ u_{8} = \delta_{2} u $ with $ 0 < \delta_{2} \ll 1 $ small enough such that
\begin{equation*}
\max_{\bar{M}} u_{8} \leqslant \min_{\bar{M}} u_{8}'.
\end{equation*}
We have
\begin{equation*}
-a\Delta_{g} u_{8} + S_{g} u_{8} \leqslant -a\Delta_{g} u_{8} + \max_{\bar{M}} S_{g} u_{8} \leqslant -a\Delta_{g} u_{8} + Cu_{8} = 0 \leqslant S u_{8}^{p-1} \; {\rm in} \; M, u_{8} \leqslant c\phi \; {\rm on} \; \partial M.
\end{equation*}
Thus $ u_{8} $ is a sub-solution of (\ref{scalar:eqn1}) with boundary condition $ c \phi $. Furthermore, $ 0 < u_{8} \leqslant u_{8}' $. We conclude by Corollary \ref{iteration:cor1} that there exists $ u \in \calC^{\infty}(M) $, $ 0 < u_{8} \leqslant u \leqslant u_{8}' $, that solves (\ref{scalar:eqn1}) with boundary condition $ c\phi $.
\end{proof}
\medskip

The next two results cover the cases $ S > 0 $ somewhere and $ S \leqslant 0 $ everywhere, provided that $ \eta_{1} > 0 $.
\begin{corollary}\label{scalar:cor2}
Let $ (\bar{M}, g) $ be a compact Riemannian manifold with smooth boundary $ \partial M $. Let $ \phi > 0 $ be a smooth function on $ \partial M $. Given a function $ S \in \calC^{\infty}(\bar{M}) $ which is positive somewhere. If $ \eta_{1} > 0 $, then there exists a metric $ \tilde{g} = u^{p-2} g $ conformal to $ g $ and a positive constant $ c $ such that the positive function $ u \in \calC^{\infty}(M) $ and $ u = c\phi $ on $ \partial M $.
\end{corollary}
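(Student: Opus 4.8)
The plan is to solve (\ref{scalar:eqn1}) for this $S$ by the monotone iteration scheme of Corollary \ref{iteration:cor1}, following the pattern of Theorem \ref{scalar:thm3}; the only new feature is that $S$ need not be positive everywhere, which affects only the construction of the sub-solution.

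First I would build the super-solution by dominating $S$ by a positive constant. Since $S>0$ somewhere, $\bar S := \max_{\bar M} S$ is a strictly positive (smooth, constant) function with $\bar S \geqslant S$ on $\bar M$. Applying Theorem \ref{scalar:thm3} with the positive function $\bar S$ in place of $S$ produces a constant $c>0$ and a positive $u_+ \in \calC^{\infty}(M) \cap \calC^{0}(\bar M)$ with $-a\Delta_g u_+ + S_g u_+ = \bar S u_+^{p-1}$ in $M$ and $u_+ = c\phi$ on $\partial M$. Because $u_+ > 0$ and $\bar S \geqslant S$, we get $-a\Delta_g u_+ + S_g u_+ - S u_+^{p-1} = (\bar S - S) u_+^{p-1} \geqslant 0$, so $u_+$ is a super-solution of (\ref{scalar:eqn1}) with boundary value $c\phi$.

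Next I would build the sub-solution by scaling down a solution of a linear auxiliary equation, as in Theorem \ref{dirichlet:thm1} and Theorem \ref{scalar:thm3}. Fix a constant $C > \max_{\bar M} S_g$ and let $u \in \calC^{\infty}(M) \cap \calC^{0}(\bar M)$ be the positive solution of $-a\Delta_g u + Cu = 0$ in $M$, $u = c\phi$ on $\partial M$ (existence and positivity by Lax--Milgram, elliptic regularity, and the maximum principle). Put $u_- := \delta u$ with $\delta \in (0,1)$. Then $u_- = \delta c\phi \leqslant c\phi$ on $\partial M$, and $u_- \leqslant u_+$ on $\bar M$ once $\delta$ is small, since $u_+$ is bounded below by a positive constant. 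For the differential inequality, note
\[
-a\Delta_g u_- + S_g u_- = \delta (S_g - C) u \leqslant -\kappa \delta \quad \text{in } M,
\]
where $\kappa := \left( C - \max_{\bar M} S_g \right) \min_{\bar M} u > 0$. Where $S \geqslant 0$ the term $S u_-^{p-1}$ is nonnegative and the inequality $-a\Delta_g u_- + S_g u_- \leqslant S u_-^{p-1}$ is clear; where $S < 0$ one has $S u_-^{p-1} \geqslant -M_0 \delta^{p-1}$ with $M_0 := \max_{\bar M} |S| \cdot \max_{\bar M} u^{p-1}$, so it suffices that $\kappa \delta \geqslant M_0 \delta^{p-1}$, i.e. $\kappa \geqslant M_0 \delta^{p-2}$, which holds for $\delta$ small because $p - 2 = \tfrac{4}{n-2} > 0$. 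Hence $u_-$ is a sub-solution of (\ref{scalar:eqn1}) with boundary value $c\phi$, and $0 \leqslant u_- \leqslant u_+$, $u_- \not\equiv 0$.

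With these $u_\pm \in \calC^{0}(\bar M) \cap \calC^{\infty}(M)$, Corollary \ref{iteration:cor1} yields a positive $u \in \calC^{\infty}(M) \cap \calC^{1,\alpha}(\bar M)$ solving $-a\Delta_g u + S_g u = S u^{p-1}$ in $M$, $u = c\phi$ on $\partial M$, with $u_- \leqslant u \leqslant u_+$; then $\tilde g = u^{p-2} g$ has scalar curvature $S$ in $M$ and $u = c\phi$ on $\partial M$, as required. The only genuinely new point compared with Theorem \ref{scalar:thm3} is verifying the sub-solution inequality where $S < 0$, and the obstacle there is mild: the left-hand side scales like $\delta$ while the forcing term scales like $\delta^{p-1}$ with $p - 1 > 1$, so taking $\delta$ small forces the inequality.
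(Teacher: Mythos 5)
Your proposal is correct and runs the same monotone-iteration / sub-and-super-solution scheme as the paper, with two minor variations. For the super-solution the paper reuses the scaled first eigenfunction $\delta_{1}\varphi$ directly (exactly as in Theorem \ref{scalar:thm3}; the key estimate $\eta_{1}\min_{\bar{M}}\varphi \geqslant \delta_{1}^{p-2}\max_{\bar{M}} S \max_{\bar{M}}\varphi^{p-1}$ only needs $\max_{\bar{M}} S > 0$, which holds since $S$ is positive somewhere), whereas you invoke Theorem \ref{scalar:thm3} as a black box with the positive constant $\bar{S} = \max_{\bar{M}} S$ and then observe $\bar{S} \geqslant S$ to downgrade the exact solution to a super-solution --- a clean, modular alternative that buys the same thing. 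For the sub-solution you take $C > \max_{\bar{M}} S_{g}$ strictly, whereas the paper writes $C \geqslant \max_{\bar{M}} S_{g}$; your strict choice is actually the sharper one. With $C = \max_{\bar{M}} S_{g}$ the quantity $-a\Delta_{g} u_{-} + S_{g} u_{-} = \delta(S_{g} - C)u$ can vanish at a maximizer of $S_{g}$, and if $S < 0$ there the required inequality $\delta(S_{g}-C)u \leqslant S u_{-}^{p-1}$ would fail no matter how small $\delta$ is. Taking $C > \max_{\bar{M}} S_{g}$ gives $\kappa = (C - \max_{\bar{M}} S_{g})\min_{\bar{M}} u > 0$ and reduces matters to $\kappa\delta \geqslant M_{0}\delta^{p-1}$, which holds for small $\delta$ since $p - 1 > 1$; this is the reading one should give the paper's argument, and you have made it explicit.
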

\begin{proof}
The choice of super-solution $ u_{9}' $ of (\ref{scalar:eqn1}) with boundary condition $ c \phi $ for some $ c > 0 $ is the same as in Theorem \ref{scalar:thm3}. For sub-solution, we consider the positive solution $ u $ of 
\begin{equation}\label{scalar:eqn11a}
-a\Delta_{g} u + Cu = 0 \; {\rm in} \; M, u = c\phi \; {\rm on} \; \partial M.
\end{equation}
with $ C \geqslant \max_{\bar{M}} S_{g} $. Scaling $ u \mapsto \delta u $ for $ 0 < \delta \ll 1 $ small enough so that
\begin{equation*}
-C u \leqslant \delta^{p-2} \min_{\bar{M}} \left( S u^{p-1} \right).
\end{equation*}
This can be done when $ \min_{\bar{M}} S < 0 $. It is trivial if $ S \geqslant 0 $ everywhere. We choose $ \delta $ even smaller so that $ u_{9} : = \delta u \leqslant u_{9}' $ everywhere on $ \bar{M} $, as above. The rest arguments are the same.
\end{proof}
\begin{corollary}\label{scalar:cor3}
Let $ (\bar{M}, g) $ be a compact Riemannian manifold with smooth boundary $ \partial M $. Let $ \phi > 0 $ be a smooth function on $ \partial M $. Given a function $ S \in \calC^{\infty}(\bar{M}) $ which is nonpositive everywhere. If $ \eta_{1} > 0 $, then there exists a metric $ \tilde{g} = u^{p-2} g $ conformal to $ g $ and a positive constant $ c $ such that the positive function $ u \in \calC^{\infty}(M) $ and $ u = \phi $ on $ \partial M $.
\end{corollary}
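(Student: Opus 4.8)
The plan is to produce an ordered pair of sub- and super-solutions of (\ref{scalar:eqn1}) with boundary value $\phi$ and then invoke the monotone iteration scheme of Corollary \ref{iteration:cor1}; the hypothesis $S \leqslant 0$ makes the super-solution essentially free, so the whole difficulty sits in the sub-solution. For the super-solution, since $\eta_{1} > 0$ I would take the positive Robin eigenfunction $\varphi \in \calC^{\infty}(\bar{M})$ of Proposition \ref{pre:prop3}, $-a\Delta_{g}\varphi + S_{g}\varphi = \eta_{1}\varphi$ in $M$, and rescale it upward: choose $\delta > 0$ large enough that $u_{+} := \delta\varphi \geqslant \phi$ on $\partial M$. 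Then, using $\eta_{1} > 0$, $u_{+} > 0$ and $S \leqslant 0$,
\[
-a\Delta_{g} u_{+} + S_{g} u_{+} = \eta_{1} u_{+} \geqslant 0 \geqslant S u_{+}^{p-1} \quad \text{in } M,
\]
so $u_{+}$ is a super-solution of (\ref{scalar:eqn1}) with no smallness constraint on $\delta$. This is exactly where the sign condition $S \leqslant 0$ makes the problem easier than the $S > 0$ case of Theorem \ref{scalar:thm3}, in which an upward scaling is impossible and one is forced to replace $\phi$ by $c\phi$.

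For the sub-solution I would fix a constant $C > 0$ with $C > \max_{\bar{M}} S_{g}$ and solve the linear Dirichlet problem $-a\Delta_{g} u + Cu = 0$ in $M$, $u = \phi$ on $\partial M$ by Lax--Milgram together with the regularity of Theorem \ref{pre:thm1} (as in Theorem \ref{dirichlet:thm1}); the strong maximum principle gives a positive $u \in \calC^{\infty}(M) \cap \calC^{0}(\bar{M})$, bounded below by some $m_{0} > 0$. The naive choice $u_{-} = u$ only yields $-a\Delta_{g} u_{-} + S_{g} u_{-} = (S_{g} - C)u \leqslant 0$, which is not enough since $S u_{-}^{p-1}$ is also $\leqslant 0$. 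The remedy is to rescale downward: set $u_{-} := \epsilon u$ with $0 < \epsilon \ll 1$. Then $-a\Delta_{g} u_{-} + S_{g} u_{-} = \epsilon(S_{g} - C)u \leqslant -\epsilon m$ for the fixed constant $m := m_{0}(C - \max_{\bar{M}} S_{g}) > 0$, whereas $S u_{-}^{p-1} = \epsilon^{p-1} S u^{p-1} \geqslant -\epsilon^{p-1} M$ with $M := \max_{\bar{M}}(|S| u^{p-1}) \geqslant 0$. Since $p - 1 > 1$, for $\epsilon$ small enough $\epsilon^{p-1} M < \epsilon m$, hence
\[
-a\Delta_{g} u_{-} + S_{g} u_{-} \leqslant -\epsilon m < -\epsilon^{p-1} M \leqslant S u_{-}^{p-1} \quad \text{in } M
\]
(trivially if $S \equiv 0$, i.e. $M = 0$). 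Shrinking $\epsilon$ further so that also $\epsilon \leqslant 1$ and $\epsilon u \leqslant u_{+}$ on $\bar{M}$, the function $u_{-} = \epsilon u$ is a positive sub-solution of (\ref{scalar:eqn1}) with $u_{-} \leqslant \phi$ on $\partial M$ and $0 < u_{-} \leqslant u_{+}$.

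Finally, $u_{-}, u_{+} \in \calC^{0}(\bar{M}) \cap \calC^{\infty}(M)$ with $0 \leqslant u_{-} \leqslant u_{+}$ and $u_{-} \not\equiv 0$ satisfy (\ref{iteration:eqn10}), so Corollary \ref{iteration:cor1} yields a positive $u \in \calC^{\infty}(M) \cap \calC^{1, \alpha}(\bar{M})$ solving $-a\Delta_{g} u + S_{g} u = S u^{p-1}$ in $M$, $u = \phi$ on $\partial M$; equivalently $\tilde{g} = u^{p-2} g$ is conformal to $g$, has scalar curvature $S$ in $M$, and $\imath^{*}\tilde{g} = \phi^{p-2}\,\imath^{*} g$. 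I expect the sub-solution to be the main obstacle: in contrast to the cases where $S$ is positive somewhere, no sub-solution is available ``for free'', and it is precisely the exponent inequality $p - 1 > 1$ --- so that $\epsilon^{p-1} \ll \epsilon$ as $\epsilon \to 0^{+}$ --- that makes the rescaled candidate $\epsilon u$ a valid sub-solution.
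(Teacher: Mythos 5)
Your proposal is correct and takes essentially the same route as the paper: the super-solution is an upward rescaling of the Robin eigenfunction $\varphi$ (trivial here since $\eta_{1}>0$ and $S\leqslant 0$, explaining why the statement has $c=1$), and the sub-solution is a downward rescaling $\epsilon u$ of the solution to the linear problem $-a\Delta_{g}u + Cu = 0$, $u=\phi$ on $\partial M$, with the exponent gap $p-1>1$ making the rescaling work, exactly as in the paper's Corollary \ref{scalar:cor2}. One small point in your favor: you take $C>\max_{\bar M}S_g$ strictly, which is what actually guarantees the fixed negative margin $-\epsilon m$; the paper's $C\geqslant\max_{\bar M}S_g$ would not suffice at a point where $S_g$ attains its maximum and $S<0$.
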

\begin{proof} Due to the eigenvalue problem (\ref{scalar:eqn10}), the choice super-solution is trivial with $ c = 1 $ in previous two results. The choice of sub-solution is the same as in Corollary \ref{scalar:cor2}.
\end{proof}
\begin{remark}\label{scalar:re2}
Due to Theorem \ref{scalar:thm3}, Corollary \ref{scalar:cor2} and \ref{scalar:cor3}, we conclude that any smooth function can be a scalar curvature of some metric under conformal change. This is different from the closed manifold case. On closed manifolds $ (M, g) $, $ S \in \calC^{\infty}(M) $ is a scalar curvature under conformal change if some positive function $ u \in \calC^{\infty}(M) $ solves
\begin{equation*}
-a\Delta_{g} u + S_{g} u = Su^{p-1} \; {\rm in} \; M.
\end{equation*}
We can assume $ S_{g} \equiv 1 $, due to the results of the Yamabe problem. Pairing both sides with $ u $, we have
\begin{equation*}
a \lVert \nabla_{g} u \rVert_{\calL^{2}(M, g)}^{2} + \lVert u \rVert_{\calL^{2}(M, g)}^{2} = \int_{M} S u^{p} \dvol.
\end{equation*}
It follows that $ S $ must be positive somewhere on closed manifolds. There are other obstructions on $ n $-sphere, see \cite{KW2}. The difference between closed manifolds and compact manifolds with boundary is due to the fact that the boundary behavior will compensate the deformation in the interior of $ \bar{M} $.
\end{remark}
\medskip

Similar to the constant curvature case in Theorem \ref{dirichlet:thethm}, we have the following result with respect to prescribed scalar curvature.
\begin{theorem}\label{scalar:thethm} Let $ (\bar{M}, g) $ be a compact manifold with smooth boundary $ \partial M $.

(i) If $ \eta_{1} < 0 $, any negative function $ S \in \calC^{\infty}(\bar{M}) $ is the prescribed curvature of some metric under conformal change in $ M $; meanwhile the $ ( n - 1) $-dimensional closed manifold $ \partial M $ admits a constant scalar curvature with the same conformal change;

(ii) If $ \eta_{1} = 0 $, any function $ S \in \calC^{\infty}(\bar{M}) $ that is negative in $ U_{\gamma}(\partial M) $ for any $ \gamma > 0 $ or $ S \equiv 0 $ is the prescribed curvature of some metric under conformal change in $ M $; meanwhile the $ ( n - 1) $-dimensional closed manifold $ \partial M $ admits a constant scalar curvature with the same conformal change;

(iii) If $ \eta_{1} > 0 $, any function $ S \in \calC^{\infty}(\bar{M}) $ is the prescribed curvature of some metric under conformal change in $ M $; meanwhile the $ ( n - 1) $-dimensional closed manifold $ \partial M $ admits a constant scalar curvature with the same conformal change;
\end{theorem}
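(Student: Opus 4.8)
The plan is to combine the interior prescribed–curvature results of this section (Theorems \ref{scalar:thm1}, \ref{scalar:thm2}, \ref{scalar:thm3} together with Corollaries \ref{scalar:cor1}, \ref{scalar:cor2}, \ref{scalar:cor3}) with the same boundary normalization used in the proof of Theorem \ref{dirichlet:thethm}. First I would fix the boundary conformal factor intrinsically: viewing $ \partial M $ as a closed $ (n-1) $-manifold with the induced metric $ \imath^{*} g $, the solution of the Yamabe problem when $ n \geqslant 4 $ (or the uniformization theorem when $ n = 3 $) produces a positive smooth function on $ \partial M $ whose associated conformal metric $ g_{0} $ (resp. $ g_{0}' $) has constant scalar (resp. Gaussian) curvature. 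Define $ \phi \in \calC^{\infty}(\partial M) $ by (\ref{dirichlet:eqn11}), so that $ \phi^{p-2}\imath^{*}g = g_{0} $ when $ n \geqslant 4 $ and $ \phi^{p-2}\imath^{*}g = g_{0}' $ when $ n = 3 $; in all cases $ \phi > 0 $ everywhere on $ \partial M $.

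With this $ \phi $ fixed, I would split into three cases according to the sign of $ \eta_{1} $ and invoke the matching interior result applied to the given $ S $. If $ \eta_{1} < 0 $ and $ S < 0 $ everywhere, Theorem \ref{scalar:thm1} yields a positive $ u \in \calC^{\infty}(M) $ solving $ -a\Delta_{g} u + S_{g} u = S u^{p-1} $ in $ M $ with $ u = \phi $ on $ \partial M $. If $ \eta_{1} = 0 $, then for $ S $ negative on $ \bar{U}_{\gamma}(\partial M) $ for some $ \gamma > 0 $ Theorem \ref{scalar:thm2} applies, while for $ S \equiv 0 $ one uses Theorem \ref{dirichlet:thm1} (and Corollary \ref{scalar:cor1} if $ S < 0 $ everywhere), producing a positive $ u \in \calC^{\infty}(M) $ solving the same equation with $ u = c\phi $ on $ \partial M $ for some constant $ c > 0 $. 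If $ \eta_{1} > 0 $, then Theorem \ref{scalar:thm3}, Corollary \ref{scalar:cor2} and Corollary \ref{scalar:cor3} between them cover every $ S \in \calC^{\infty}(\bar{M}) $ and again give a positive $ u $ with $ u = c\phi $ on $ \partial M $ for some $ c > 0 $.

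It then remains to read off the boundary conclusion. Setting $ \tilde{g} = u^{p-2}g $, the interior scalar curvature of $ \tilde{g} $ equals $ S $ by construction, and the induced metric is
\begin{equation*}
\imath^{*}\tilde{g} = \left( u\big|_{\partial M} \right)^{p-2}\imath^{*}g = (c\phi)^{p-2}\imath^{*}g = c^{p-2}\,\phi^{p-2}\imath^{*}g = \begin{cases} c^{p-2} g_{0}, & n \geqslant 4, \\ c^{p-2} g_{0}', & n = 3, \end{cases}
\end{equation*}
where $ c = 1 $ in the $ \eta_{1} < 0 $ case. Since scaling a metric by a positive constant scales its scalar curvature by the reciprocal constant, $ c^{p-2} g_{0} $ still has constant scalar curvature and $ c^{p-2} g_{0}' $ still has constant Gaussian curvature; hence $ \partial M $, as a closed $ (n-1) $-manifold, carries a constant scalar curvature under the same conformal change $ g \mapsto \tilde{g} $, which is what each of (i)–(iii) asserts.

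I expect no substantial obstacle here: all of the analytic content is already contained in Theorems \ref{scalar:thm1}–\ref{scalar:thm3} and their corollaries, and the only point to be careful about is the harmless constant rescaling $ \phi \mapsto c\phi $ of the boundary data in the cases $ \eta_{1} \geqslant 0 $, which changes $ g_{0} $ only by a positive constant factor and therefore preserves constancy of the boundary curvature. The case division, and the precise hypotheses on $ S $ appearing in (i)–(iii), are dictated entirely by which interior theorem is available.
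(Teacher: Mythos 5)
Your proposal is correct and follows essentially the same route as the paper: fix the boundary conformal factor $\phi$ via the Yamabe/uniformization solution on $\partial M$, invoke the appropriate interior result (Theorems \ref{scalar:thm1}--\ref{scalar:thm3} and Corollaries \ref{scalar:cor1}--\ref{scalar:cor3}, plus Theorem \ref{dirichlet:thm1} for $S\equiv 0$) to obtain $u$ with $u=c\phi$ on $\partial M$, and then observe that $\imath^{*}\tilde{g}=c^{p-2}g_{0}$ (resp.\ $c^{p-2}g_{0}'$), which still has constant scalar (resp.\ Gaussian) curvature. The paper writes out only case (iii) explicitly and declares the other two analogous, whereas you spell out all three cases and explicitly note that the harmless constant rescaling of boundary data preserves constancy of the boundary curvature; that is merely a fuller presentation of the same argument.
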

\begin{proof} We prove (iii) above, the rest are almost the same. Let
\begin{equation*}
\imath : \partial M \hookrightarrow \bar{M}
\end{equation*}
be the inclusion map and thus $ \imath^{*} g $ is the induced metric on $ \partial M $. When $ n \geqslant 4 $, there exists a smooth function $ \tilde{\phi} > 0 $ on the $ (n - 1) $-dimensional manifold $ \partial M $ such that
\begin{equation*}
g_{0} = \tilde{\phi}^{\frac{n - 1 + 2}{n - 1 - 2}} \left(\imath^{*} g \right) = \tilde{\phi}^{\frac{n + 1}{n - 3}} \left(\imath^{*} g \right) : = \tilde{\phi}^{p'} \left(\imath^{*} g \right), n \geqslant 4
\end{equation*}
admits a constant scalar curvature on $ \partial M $, due to the result of the Yamabe problem. When $ n = 3 $, there exists a smooth function $ f $ on $ \partial M $ such that
\begin{equation*}
g_{0}' = e^{2f} \left(\imath^{*} g \right), n = 3
\end{equation*}
admits a constant Gaussian curvature, due to the uniformization theorem. Denote
\begin{equation}\label{scalar:eqn12}
\phi = \begin{cases} \tilde{\phi}^{\frac{p'}{p - 2}}, & n \geqslant 4 \\ \left(e^{2f} \right)^{\frac{1}{p - 2}}, & n = 3 \end{cases}.
\end{equation}
Fix this $ \phi \in \calC^{\infty}(\partial M ) $ when $ n \geqslant 3 $. Given any function $ S \in \calC^{\infty}(\bar{M}) $, we apply either Theorem \ref{scalar:thm3}, Corollary \ref{scalar:cor2} or \ref{scalar:cor3} to conclude that (\ref{scalar:eqn1}) admits a positive solution $ u \in \calC^{\infty}(M) $ with boundary condition $ c\phi $ on $ \partial M $, for some constant $ c > 0 $. It follows that the metric
\begin{equation*}
\tilde{g} = u^{p-2} g
\end{equation*}
admits the scalar curvature $ S_{\tilde{g}} = S $ in the interior $ M $. Furthermore,
\begin{equation*}
\imath^{*} \tilde{g} = \imath^{*} \left (u^{p-2} g \right) = c^{p-2}\phi^{p-2} \imath^{*} g = \begin{cases} c_{1} \tilde{\phi}^{p'} \imath^{*} g = g_{0}, & n \geqslant 4 \\ c_{2}e^{2f} \imath^{*} g = g_{0}', & n = 3 \end{cases}
\end{equation*}
for some positive constants $ c_{1} $ and $ c_{2} $. Hence the induced metric $ \imath^{*} \tilde{g} $ admits a constant scalar curvature on $ (n - 1) $-dimensional closed manifold $ \partial M $ simultaneously.
\end{proof}

\bibliographystyle{plain}
\bibliography{Yamabessd}

\end{document}